\documentclass[12pt]{article}
\usepackage{bbm}
 \usepackage{amssymb}
\usepackage{amssymb, amsthm, amsmath, amscd}
\setlength{\topmargin}{-45pt} \setlength{\evensidemargin}{0cm}
\setlength{\oddsidemargin}{0cm} \setlength{\textheight}{23.7cm}
\setlength{\textwidth}{16cm}
\usepackage[usenames,dvipsnames]{color}


\newtheorem{thm}{Theorem}[section]
\newtheorem{lem}[thm]{Lemma}
\newtheorem{prop}[thm]{Proposition}

\theoremstyle{definition}

\theoremstyle{definition}
\newtheorem{df}[thm]{Definition}
\theoremstyle{definition}
\newtheorem{rem}[thm]{Remark}
\newtheorem{nota}[thm]{Notation}
\theoremstyle{definition}

\newcommand{\red}{\textcolor{red}}

\renewcommand{\phi}{\varphi}


\definecolor{purple}{RGB}{150,10,200} 

\newcommand{\CAs}{$C^*$-algebras}

\newcommand{\Ped}{{\rm Ped}}
\newcommand{\Cu}{{\rm Cu}}
\newcommand{\LAff}{{\rm LAff}}

\newcommand{\QT}{{\rm QT}}

\newcommand{\TQT}{\widetilde{\rm QT}}
\newcommand{\Lsc}{{\rm Lsc}}

\newcommand{\wtilde}{\widetilde}
 
\newcommand{\wh}{\widehat} 
\newcommand{\ol}{\overline}  
\newcommand{\nzero}{\setminus\{0\}}  
\newtheorem*{theorema}{Theorem A}


\newcommand{\N}{\mathbb{N}}
\newcommand{\K}{\mathbb{K}}
\newcommand{\Z}{\mathbb{Z}}

\newcommand{\R}{\mathbb{R}}
\newcommand{\C}{\mathbb{C}}

\numberwithin{equation}{section}

\newcommand{\Aff}{\operatorname{Aff}}

\newcommand{\Her}{\mathrm{Her}}

\newcommand{\dt}{\delta}
\newcommand{\ep}{\varepsilon}





\newcommand{\rforal}{\,\,\,{\rm for\,\,\,all}\,\,\,}
\newcommand{\CA}{$C^*$-algebra}

\newcommand{\wtd}{\widetilde}

\newcommand{\beq}{\begin{eqnarray}}
\newcommand{\eneq}{\end{eqnarray}}


\usepackage{amsfonts}
\usepackage{mathrsfs}
\usepackage{textcomp}
\usepackage[all]{xy}
\usepackage{enumerate}
\usepackage{hyperref} 




\title{From Stable Rank One to Real Rank Zero: 
\\ {\large A Note on Tracial Approximate Oscillation Zero}}  
\author{Xuanlong Fu\footnote{Department of Mathematics,  Tongji University. Email: xuanlongfu@tongji.edu.cn.}}
\date{ }
\begin{document}

\maketitle

\begin{abstract} 
We present a 
relation between stable rank one and real rank zero via the method of tracial oscillation. 
Let $A$ be a  simple separable $C^*$-algebra of stable rank one. 
We show that $A$ has tracial approximate oscillation zero and,
as a consequence, the tracial sequence algebra $l^\infty(A)/J_A$ has real rank zero, where $J_A$ is the trace-kernel ideal 
with respect to 2-quasitraces. 

We also show that for a \CA\ $B$ that has non-trivial 2-quasitraces, $B$ has tracial approximate oscillation zero is equivalent to $l^\infty(B)/J_B$ has real rank zero. 


\end{abstract}

\section{Introduction}

Stable rank one (\cite{{Rff}}) and real rank zero (\cite{BP91}) are regularity properties of \CAs\ that play important role in the classification of \CAs. 
\CAs\ that can be classified by the Elliott invariant are ${\cal Z}$-stable (i.e., $A\cong A\otimes {\cal Z}$), where 
 ${\cal Z}$ is the Jiang-Su algebra constructed by Jiang and Su in \cite{JS99}. ${\cal Z}$ is the unique  simple separable unital nuclear infinite-dimensional \CA\ that satisfies UCT and has the same Elliott invariant as complex numbers. 
Simple  ${\cal Z}$-stable \CAs\ 
are either of stable rank one or of real rank zero. 
Gong-Jiang-Su showed that  simple unital ${\cal Z}$-stable \CAs\ are  either stably finite or  purely infinite (\cite{GJS}). 
Zhang showed that simple purely infinite \CAs\ are have real rank zero (\cite{Z90}). 
R{\o}rdam  showed that simple unital ${\cal Z}$-stable \CAs\ are either of stable rank one or  purely infinite (\cite{Ror04JS}). 
In 2021, the author together with Li and Lin proved that 
simple  ${\cal Z}$-stable  (not necessary unital)   \CAs\  are either have stable rank one or purely infinite (\cite[Corollary 6.8]{FLL21}). 
See also \cite{Rob,FL2022} for related results on almost stable rank one. 
Non-${\cal Z}$-stable simple  \CAs\ also can have stable rank one. Villadsen's algebras of first type are such examples (\cite{V98}). 
Elliott-Ho-Toms showed that simple diagonal AH-algebras have stable rank one (\cite{EHT}).

The surjectivity of the canonical map  $\Gamma: \Cu(A)\to {\rm LAff}_+\left(\TQT(A)\right)$ is also an important regularity property of \CAs, which is related to the computation of the Cuntz semigroups of \CAs. 
{{N.~P.~Brown first raised the question when is $\Gamma$ being  surjective (see the remark after Question 1.1 of \cite{T20}).}} 
Thiel showed that $\Gamma$ is surjective for simple separable unital  \CAs\ of stable rank one (\cite[Theorem 8.11]{T20}). 
Antoine-Perera-Robert-Thiel showed that $\Gamma$ is surjective for simple separable stable rank one \CAs\ (\cite[Theorem 7.13]{APRT}).
See \cite{BPT, ERS, DT} for more works on the surjectivity of $\Gamma.$ 

In order to study the relations between stable rank one and other regularity properties, the author and Lin introduced the notion of tracial approximate oscillation zero in \cite{FLosc}. 
{{As pointed out in the first paragraph of the introduction of \cite{FLosc}, tracial approximate oscillation zero has a background in the small boundary property in dynamical systems.}} 
Assume that $A$ is a simple separable \CA\ with nontrivial 2-quasitraces and $A$ has strict comparison.
We showed (in \cite[Theorem 1.1]{FLosc}) that $A$ has stable rank one 
is equivalent to  $A$ has tracial approximate oscillation zero, and 
is also equivalent to that 
the canonical map $\Gamma$ is surjective and $A$ almost has stable rank one. 

Latter, by using tracial approximate oscillation zero, Lin proved that for a simple \CA\ $B$ with  $\Cu(B)\cong \Cu(B\otimes {\cal Z}),$  $B$ is either of purely infinite or has stable rank one (\cite{LinJFA}). 
Let $A$ be a simple separable non-elementary amenable stable rank one  \CA\ 
with the  $\tilde T(A)\setminus\{0\}\neq \emptyset.$ Tracial approximate oscillation zero also plays an important role in 
Lin's work on proving the Toms-Winter conjecture for $A$ with the extreme boundary of $\tilde T(A)\nzero$ satisfying condition (C) (see \cite[Theorem 5.6]{LinAdv}) and for $A$ with  $\tilde T(A)\nzero$ has  $\sigma$-compact countable-dimensional extremal boundary ({{see \cite[Theorem 1.1]{L22}}}). {{See \cite{
CETWW21, KR14, 
MS12, MS14, Ror04JS, T20, W12pure} for more works on the Toms-Winter conjecture.}}

There are plenty of examples that have stable rank one but not have real rank zero, e.g., the Jiang-Su algebra and Villadsen's algebras of first type. 
Still, our main theorem in this paper is to show that separable simple stable rank one \CAs\ 
have tracial approximate oscillation zero, and 
they are tracially  real rank zero in the following  sense
(where $J_A$  is the trace-kernel ideal with respect to 2-quasitraces): 

\begin{theorema}\label{shier-29-1}
{\rm (Theorem \ref{jiu18-a1})}
Let $A$ be a separable
simple \CA\ of stable rank one, 
then $A$ has tracial approximate oscillation zero. 
Moreover,  $l^\infty(A)/
{{J_A}}$ has real rank zero. 
\end{theorema}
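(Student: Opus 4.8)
Granting the first assertion, the ``moreover'' part is essentially formal. A separable simple $C^*$-algebra of stable rank one is stably finite, hence carries a nonzero $2$-quasitrace (the elementary cases $A\cong M_n$, $A\cong\K$ being easy to treat by hand, where $J_A$ is the $c_0$-ideal and $l^\infty(A)/J_A$ plainly has real rank zero). Thus the general equivalence ``$B$ has tracial approximate oscillation zero $\iff l^\infty(B)/J_B$ has real rank zero'', valid for any $B$ with nontrivial $2$-quasitraces and not itself invoking stable rank one (this is the second main result announced in the abstract), applies with $B=A$ and yields the conclusion. It therefore remains to show that a separable simple $A$ of stable rank one has tracial approximate oscillation zero.

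The plan rests on two inputs. First, the surjectivity of the canonical map $\Gamma\colon\Cu(A)\to\LAff_+(\TQT(A))$ for simple separable $C^*$-algebras of stable rank one, from \cite[Theorem 7.13]{APRT} (see also \cite[Theorem 8.11]{T20}), together with the finer description of $\Cu(A)$ and the structure theory developed around it: the ``soft'' part of $\Cu(A)$ is order-isomorphic to (an ideal-quotient of) $\LAff_+(\TQT(A))$, on which Cuntz comparison is detected by dimension functions. Second, a continuity reformulation of the oscillation. Fix a compact base $\Delta$ of the cone $\TQT(A)$ and, for $b$ in the Pedersen ideal of $A\otimes\K$, regard $\tau\mapsto d_\tau(b)$ as a lower semicontinuous affine function on $\Delta$. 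I would first prove that $\omega(\langle b\rangle)=0$ if and only if $\tau\mapsto d_\tau(b)$ is continuous on $\Delta$. For the forward implication, sandwich
\[
d_\tau\big((b-\delta)_+\big)\ \le\ \tau\big(g_\delta(b)\big)\ \le\ d_\tau\big((b-\tfrac{\delta}{2})_+\big)\ \le\ d_\tau(b)
\]
for suitable continuous $g_\delta$ vanishing near $0$; since $\omega(\langle b\rangle)=0$ forces $d_\tau((b-\delta)_+)\to d_\tau(b)$ uniformly on $\Delta$, the function $d_\tau(b)$ is a uniform limit of the continuous affine functions $\tau\mapsto\tau(g_\delta(b))$, hence continuous. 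The converse is Dini's theorem on the compact set $\Delta$.

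Now fix $a\in\Ped(A\otimes\K)_+$ and $\epsilon>0$. Take a continuous bump $h$ with $h\equiv1$ on $[\epsilon,\|a\|]$, $h\equiv0$ on $[0,\epsilon/2]$ and $0\le h\le1$; then $g(\tau):=\tau(h(a))$ is continuous affine on $\Delta$ and
\[
d_\tau\big((a-\epsilon)_+\big)\ \le\ g(\tau)\ \le\ d_\tau(a)\qquad(\tau\in\Delta),
\]
while the defect $a-(a-\epsilon)_+$ has uniformly small trace, $\tau\big(a-(a-\epsilon)_+\big)\le\epsilon\,d_\tau(a)$ — this is the precise sense in which $g$ is ``$\epsilon$-tracially close from below'' to $\widehat{\langle a\rangle}$. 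Using the structure of $\Cu(A)$ above (applied to the soft part, and treating the compact summand of $\langle a\rangle$, on which $d_\tau$ is automatically continuous, separately), I would produce $b$ with $\langle(a-\epsilon)_+\rangle\le\langle b\rangle\le\langle a\rangle$ in $\Cu(A)$ and $\widehat{\langle b\rangle}=g$; the pointwise inequality $d_\tau((a-\epsilon)_+)\le g$ upgrades to $\langle(a-\epsilon)_+\rangle\le\langle b\rangle$ precisely because comparison of soft elements is tracial. By the reformulation, $\omega(\langle b\rangle)=0$. The element $b$ then satisfies $(a-\epsilon)_+\precsim b\precsim a$ with $\omega(\langle b\rangle)=0$, which is exactly the condition required by the definition of tracial approximate oscillation zero for the pair $(a,\epsilon)$. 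As $a,\epsilon$ were arbitrary, $A$ has tracial approximate oscillation zero.

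The hard part is the realization step: placing the continuous approximant $g$ \emph{strictly} inside $\Cu(A)$, below $\langle a\rangle$ and above $\langle(a-\epsilon)_+\rangle$. In \cite[Theorem 1.1]{FLosc} the analogous move is easy because strict comparison is assumed there; here it is not, so one is forced to use the fine structure of the Cuntz semigroup of a simple separable stable-rank-one $C^*$-algebra — the splitting into a ``projection'' part and a ``soft'' part on which comparison is governed by dimension functions — and to reconcile carefully the $\Cu$-ordering with the ordering of affine functions, handling the possible compact summand of $\langle a\rangle$ and the behaviour of $\TQT$ and $\Delta$ under passing to hereditary subalgebras. I expect this structural bookkeeping, rather than any single estimate, to be where the real work lies.
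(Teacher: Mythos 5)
Your reduction of the ``moreover'' part to the general equivalence (Theorem \ref{1204-6}) is exactly the paper's route, and your first step --- realizing a prescribed function strictly below $\wh{[a]}$ by an element Cuntz-below $a$ --- is precisely the paper's ``hereditary surjectivity'' (Definition \ref{HER-surj}). The paper obtains that step more cleanly than your soft/compact splitting: by \cite{APRT} one realizes $g=\wh{[b]}$ using surjectivity of $\Gamma$ and then passes to the infimum $b\wedge a$, which exists in $\Cu(A)$ for stable rank one and satisfies $\wh{b\wedge a}=\wh{b}\wedge\wh{a}$. Up to that point your outline is sound.

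The gap is in the final step, and it is a gap of substance, not bookkeeping. First, ``$(a-\ep)_+\lesssim b\lesssim a$ with $\omega(b)=0$'' is \emph{not} the definition of tracial approximate oscillation zero: the definition requires $c\in\Her(a)_+$ with $\|a-c\|_{2}<\ep$, i.e.\ $\tau(a-c)$ uniformly small over $\ol{\QT(A)}^w$, and a Cuntz sandwich controls only dimension functions, not traces. The conversion is via $c=a^{1/2}b'a^{1/2}$ with $b'\in\Her(a)_+^1$ and the estimate $\tau(a-c)\le d_\tau(a)-\tau(b')$ (Proposition \ref{jiu21-6}), which forces you to need $d_\tau(b)$ \emph{uniformly} $\ep$-close to $d_\tau(a)$ from below. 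Your candidate $g(\tau)=\tau(h(a))$ fails exactly there: since $h$ vanishes on $[0,\ep/2]$, one has $d_\tau(a)-\tau(h(a))\ge d_\tau(a)-\tau(f_{\ep/4}(a))$, whose supremum over $\ol{\QT(A)}^w$ is at least $\omega(a)$ --- the very quantity you are trying to make small. (Your inequality $\tau(a-(a-\ep)_+)\le\ep\,d_\tau(a)$ concerns the trace gap, not the dimension-function gap; they are very different when $a$ has spectrum accumulating at $0$.) Worse, no \emph{single} $b$ with $\omega(b)=0$ can work: then $\wh{[b]}$ is continuous, and a continuous function uniformly $\ep$-close from below to the lower semicontinuous $\wh{[a]}$ already forces $\omega(a)\le 2\ep$ by a Dini argument --- the approach is circular. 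The paper escapes this (Theorem \ref{shier09-1}, Proposition \ref{shier05-1}) by decomposing $\wh{[a]}-\ep/4=\sum_n h_n$ into countably many continuous affine pieces, realizing them inductively as \emph{mutually orthogonal} elements $b_n\in\Her(a)_+$ with $\omega(b_n)<\ep/2^n$ via Lin's orthogonality lemma (Lemma \ref{jiu16-prop1}); orthogonality is essential because $\omega$ is subadditive only on orthogonal elements (Proposition \ref{1204-2}(ii)), so the partial sums keep oscillation $\le\ep$ while their traces climb to within $\ep$ of $d_\tau(a)$, and $c_m=a^{1/2}(\sum_{n\le m}b_n^{1/m})a^{1/2}$ plus a Dini argument then yields the required $\|\cdot\|_2$-approximant. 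That infinite orthogonal decomposition is the missing idea in your proposal.
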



{{The proof of above theorem is based on an observation that  Antoine-Perera-Robert-Thiel's results (\cite{APRT}) and Lin's techniques  (\cite{LinJFA}) can be fitted together, via a notion called hereditary surjective (see Definition \ref{HER-surj}).}}
This result generalizes \cite[Theorem 5.10]{FLosc} to \CAs\ without strict comparison. 
In particular, for a simple separable \CA\ $A,$ if $A$ is a  diagonal AH algebra, or  $A=C(X)\rtimes \Z^d,$
where $(X,\Z^d)$ is  a minimal free topological dynamical system,
then $A$ has tracial approximate oscillation zero and $l^\infty(A)/J_A$ has real rank zero (see Theorem \ref{shier26-1}). 
We also show that for separable 
simple \CA\ $A$ with $\QT(A)\neq \emptyset,$ $A$ has tracial approximate oscillation zero if and only if  $l^\infty(A)/J_A$ has real rank zero (Theorem \ref{1204-6}). 

This paper is organized as follows: Section 2 is a preliminary that mainly devoted to explain the relation between different versions of quasitraces. In Section 3 we introduce tracial  oscillation and show that tracial approximate oscillation zero is equivalent to $l^\infty(A)/J_A$ has real rank zero. 
In Section 4 we introduce the main technical device called hereditary surjective canonical map and show that stable rank one implies hereditary surjectivity. In Section 5 we prove that hereditary surjectivity implies tracial approximate oscillation zero, which gives a proof to the Theorem A. In Section 6 we give some applications to diagonal AH-algebras and $\Z^d$-crossed products. 

{\bf Acknowledgement:} 
The author is grateful to Huaxin Lin for many helpful conversations.

\section{Preliminaries}


\begin{nota} 
The set of all positive integers is denoted by $\N.$ 
The set of all non-negative real numbers is denoted by $\R_+.$
The set of all compact operators on a separable 
infinite-dimensional Hilbert 
space is denoted by $\K.$ 
Let $\{e_{i,j}\}$ denote a set of matrix units of $\K.$
%
Let  $(X,d)$ 
be a metric space,  let $x,y\in X$ and let $\ep>0$.
We write $x\approx_{\ep}y$ if
$d(x,y)\le \ep$.


\end{nota}

\begin{nota}
Let $A$ be a $C^*$-algebra. 
Denote by $A^{1}$ the closed unit ball of $A,$ and 
by $A_+$ the set of all positive elements in $A.$
Put $A_+^{1}:=A_+\cap A^{1}.$ 
The set of all self-adjoint elements of $A$ is denoted by $A_{sa}.$ 
Let $a\in A_+.$ Let $\Her_A(a)$ (or just $\Her(a),$ when $A$ is clear)
be the hereditary $C^*$-subalgebra of $A$ generated by $a.$
The Pedersen ideal of $A$ is 
denoted by $\Ped(A),$ which is the minimal dense ideal of $A$ (\cite[5.6]{Pedbk}).  
Let ${\rm Ped}(A)_+= {\rm Ped}(A)\cap A_+,$
${\rm Ped}(A)^{1}= {\rm Ped}(A)\cap A^{1}$ and ${\rm Ped}(A)_+^{1}={\rm Ped}(A)\cap A_+^{1}.$ 

Let $\ep >0.$ Define a continuous function
$f_{\ep} 
: \R
\rightarrow [0,1]$ as following: 
$f_\ep (t)=0$ for $t\in(-\infty,\ep],$ 
$f_\ep (t)=1$ for $t\in[2\ep,+\infty),$
and $f_\ep$ is linear on $[\ep,2\ep].$

\end{nota}

\begin{prop}\label{jiu27-2}
{\rm (\cite[Lemma 2.2.]{KRadv}, \cite{RorUHF2})}
Let $A$ be a \CA, let $a,b\in A_+,$ and 
let $\ep>\|a-b\|.$
Then there is a \emph{contraction} $c\in A^1$ such that $(a-\ep)_+=c^*bc.$ 
\end{prop}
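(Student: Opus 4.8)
The plan is to reduce the statement to one positivity estimate and then to ``divide by $b$'' while keeping control of norms. Write $e:=(a-\ep)_+$ and $\delta:=\|a-b\|<\ep$; if $\|a\|\le\ep$ then $e=0$ and $c=0$ works, so assume otherwise.

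First I would prove the estimate $e^{1/2}\,b\,e^{1/2}\ge e^2$. Indeed $\|a-b\|\le\delta<\ep$ gives $b\ge a-\ep\cdot1$ in $\widetilde A$, so $e^{1/2}be^{1/2}\ge e^{1/2}(a-\ep\cdot1)e^{1/2}$, and by the continuous functional calculus $e^{1/2}(a-\ep\cdot1)e^{1/2}=\psi(a)$ with $\psi(t)=(t-\ep)_+\,(t-\ep)=\bigl((t-\ep)_+\bigr)^2$, hence it equals $e^2$. (Equivalently, $b-e\ge(a-\ep\cdot1)-(a-\ep)_+$, whose negative part is orthogonal to $e$.) Put $b':=e^{1/2}be^{1/2}$. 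Since $b'\ge e^2\ge0$ and $t\mapsto t^{1/2}$ is operator monotone, $(b')^{1/2}\ge(e^2)^{1/2}=e$. This last line is the conceptual heart of the proof.

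Next I would build $c$ by inverting $b'$ in a regularized fashion. For $\mu>0$ set $c_\mu:=e^{1/2}(b'+\mu)^{-1/2}e^{1/2}\in A$ (forming $(b'+\mu)^{-1/2}$ in $\widetilde A$), a positive element. Writing $c_\mu=y^*y$ with $y:=(b'+\mu)^{-1/4}e^{1/2}$ gives
\[
\|c_\mu\|=\|yy^*\|=\|(b'+\mu)^{-1/4}\,e\,(b'+\mu)^{-1/4}\|\le1,
\]
the inequality holding precisely because $e\le(b'+\mu)^{1/2}$, which follows from $e\le(b')^{1/2}\le(b'+\mu)^{1/2}$. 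Moreover
\[
c_\mu\,b\,c_\mu=e^{1/2}(b'+\mu)^{-1/2}\,b'\,(b'+\mu)^{-1/2}e^{1/2}=e-\mu\,e^{1/2}(b'+\mu)^{-1}e^{1/2},
\]
and, using $0\le e\le(b')^{1/2}$ together with $\sup_{t\ge0}t^{1/2}(t+\mu)^{-1}=(2\sqrt\mu)^{-1}$, the error term has norm $\mu\,\|(b'+\mu)^{-1/2}e(b'+\mu)^{-1/2}\|\le\tfrac12\sqrt\mu\to0$.

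Finally, the net $(c_\mu)_{\mu>0}$ converges in norm as $\mu\to0^+$ to some $c\in A$; this is a routine argument once one knows that $e$, hence $e^{1/2}$, lies in the hereditary subalgebra generated by $b'$, which holds because $e^2\le b'$. Then $c$ is a positive contraction with $c^*bc=c\,b\,c=\lim_{\mu\to0^+}c_\mu b c_\mu=e=(a-\ep)_+$. I expect the only place demanding any care to be this last norm-convergence step; the rest is the continuous functional calculus and the single use of operator monotonicity of the square root in the second paragraph.
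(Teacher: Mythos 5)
The paper does not actually prove this proposition --- it is quoted from Kirchberg--R{\o}rdam --- so your argument stands or falls on its own. Most of it is fine: the inequality $b'\ge e^2$, the operator monotonicity step, the norm bound $\|c_\mu\|\le 1$, and the computation $c_\mu b c_\mu=e-\mu e^{1/2}(b'+\mu)^{-1}e^{1/2}$ with error $O(\sqrt{\mu})$ are all correct. The gap is exactly where you predicted care would be needed: the norm convergence of $c_\mu$. The justification you offer --- that $e^{1/2}$ lies in $\Her(b')$ because $e^2\le b'$ --- is not a sufficient condition. Take $A=C_0((0,1])$, $e(t)=t$ and $b'=e^2$; then $\Her(b')=A\ni e^{1/2}$, yet $c_\mu(t)=t(t^2+\mu)^{-1/2}$ converges pointwise to the discontinuous function $\mathbbm{1}_{(0,1]}$ and is not Cauchy in norm (compare $t=\sqrt{\mu}$ for $\nu\ll\mu$). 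So with only the information $b'\ge e^2$ that you carried forward, the final step genuinely fails.

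The proof is repairable because you discarded the part of the hypothesis that matters. You only used $b\ge a-\ep\cdot 1$, but the hypothesis gives $b\ge a-\delta\cdot 1$ with $\delta=\|a-b\|$ \emph{strictly} less than $\ep$, and the same functional-calculus computation then yields
\[
b'=e^{1/2}be^{1/2}\;\ge\; e^2+(\ep-\delta)\,e \;\ge\; \kappa\, e,\qquad \kappa:=\ep-\delta>0 .
\]
This linear lower bound is what makes the net Cauchy: for $\nu<\mu$, writing $F(t)=(t+\nu)^{-1/2}-(t+\mu)^{-1/2}\ge 0$ one has
\[
\|c_\nu-c_\mu\|=\bigl\|F(b')^{1/2}\,e\,F(b')^{1/2}\bigr\|\le \kappa^{-1}\bigl\|b'F(b')\bigr\|
=\kappa^{-1}\sup_{t\ge 0}tF(t)\le \kappa^{-1}\sqrt{\mu},
\]
using $(t+\mu)^{1/2}-(t+\nu)^{1/2}\le\sqrt{\mu-\nu}$. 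With that replacement for your last paragraph, the rest of the argument goes through and is essentially the standard proof of the cited lemma.
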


\begin{df}\label{Dcuntz}
Let $A$ be a \CA\
and let  $a, b\in A_+.$ 
We write $a \lesssim b$ if there are
$x_k\in A$
such that
$\lim_{k\rightarrow\infty}\|a-x_k^*bx_k\|=0$.
We write $a \sim b$ if $a \lesssim b$ and $b \lesssim a$  both hold. 
The Cuntz relation $\sim$ is an equivalence relation.
Set $\Cu(A)=(A\otimes \K)_+/\sim.$  
For $a\in (A\otimes \K)_+,$ let $[a]$ denote the Cuntz equivalence class corresponding to $a.$ The partial order on $\Cu(A)$ is given by the following: We write $[a]\le [b]$ whenever $a\lesssim b$ holds. 

Let $\iota: \K\otimes M_2(\C)\to \K$ be a $*$-isomorphism, which induces a $*$-isomorphism $\bar \iota:={\rm id}_A\otimes \iota: (A\otimes\K)\otimes M_2(\C)\to A\otimes\K.$ 
{{For $a,b\in (A\otimes \K)_+,$ define $[a]\oplus[b]:=[\bar \iota(a\otimes e_{1,1}+b\otimes e_{2,2})]\in\Cu(A).$}} With this (well-defined) addition, $\Cu(A)$ becomes a semigroup, which is called the Cuntz semigroup of $A.$ 
\end{df}

\begin{df}
Let $A$ be a \CA. A map $\phi: \Cu(A)\to [0,\infty]$ 
is called a functional, if $\phi$ is additive, 
order-preserving, $\phi(0)=0,$ and also
preserve the suprema of increasing sequences. 
Let ${\rm F}(\Cu(A))$ be  set of all functionals on $\Cu(A).$ 

The partial {{order}} on ${\rm F}(\Cu(A))$ is the canonical one: 
$f_1\le f_2$ if and only if $f_1(x)\le f_2(x)$ for all $x\in \Cu(A).$
${\rm F}(\Cu(A))$ is endowed with the topology that 
a net $\{\lambda_i\}\subset {\rm F}(\Cu(A))$ is converge to $\lambda\in  {\rm F}(\Cu(A)),$ if and only if 
$\limsup_i \lambda_i([(a-\ep)_+])\le \lambda([a])\le \liminf_i\lambda_i([a])$
for all $a\in (A\otimes\K)_+$ and all $\ep>0.$
\end{df}

\begin{df}\label{shi01-10}
(\cite[II.1.1]{BH})
Let $A$ be a pre-\CA. A quasitrace on $A$ is a map $\tau: A\to\C$ 
such that 
{\bf (1)} $0\le \tau(x^*x)=\tau(xx^*)$ for all $x\in A$; 
{\bf (2)} $\tau$ is linear on commutative $^*$-subalgebras of $A$; 
{\bf (3)} $\tau(a+ib)=\tau(a)+i\tau(b)$ for all $a,b\in A_{sa}.$

If $\tau$ can be extended to a quasitrace on $M_2(A),$ then $\tau$ is called a 2-quasitrace. 
\end{df}

By   \cite[II.2.3, II.2.5, II.4.1]{BH}, we have the following: 
All quasitraces on a \CA\ $A$  are continuous. When $\tau$ is a 2-quasitrace on $A,$ $\tau$ is order preserving, i.e., $a\le b$ implies $\tau(a)\le \tau(b),$ and $\tau$ can be extended to $M_n(A)$ for all $n\in\N.$ 

The following result is a folklore. See \cite[Proposition 2.7]{L22} for the case of 2-quasitraces. 
\begin{prop}\label{jiu25-3}
{\rm (cf. \cite[Proposition 2.7]{L22})}
Let $A$ be a \CA. 
Let $\tau: \Ped(A)\to\C$ be a quasitrace, 
then $\tau$ is lower semicontinuous on $\Ped(A)_+$. 
\end{prop}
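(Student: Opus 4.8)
The goal is to prove that whenever $a, a_k \in \Ped(A)_+$ with $\|a_k - a\| \to 0$, one has $\tau(a) \le \liminf_k \tau(a_k)$; since the norm topology on $\Ped(A)_+$ is metrizable, this sequential inequality is exactly lower semicontinuity. The argument rests on two elementary facts about the Pedersen ideal: first, if $x \in \Ped(A)_+$ then $\Her(x) \subseteq \Ped(A)$ — indeed, using the standard description $\Ped(A)_+ = \{y \in A_+ : yc = y \text{ for some } c \in A_+\}$, any such $c$ acts as a two-sided unit on $\Her(x)$, so every element of $\Her(x)$ already lies in the minimal dense ideal; second, $\Ped(A)_+$ is stable under continuous functional calculus by functions vanishing at $0$, so in particular $a_k^{1/2} \in \Ped(A)$. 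Granting these, $\tau|_{\Her(a)}$ is a quasitrace on the $C^*$-algebra $\Her(a)$ (note that $\Her(a)$ lies in the domain $\Ped(A)$ of $\tau$), hence norm continuous and, by \cite{BH}, order preserving on $\Her(a)_+$; the same holds for each $\Her(a_k)$.

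The first step is to pass to the cut-downs. For $\epsilon > 0$ the positive elements $(a-\epsilon)_+$ and $a-(a-\epsilon)_+$ commute and both lie in $\Her(a)\subseteq\Ped(A)$, so linearity of $\tau$ on the commutative $C^*$-subalgebra they generate gives $\tau(a)=\tau((a-\epsilon)_+)+\tau(a-(a-\epsilon)_+)$. Since $\|a-(a-\epsilon)_+\|\le\epsilon$ and $\tau|_{\Her(a)}$ is continuous, $\tau(a-(a-\epsilon)_+)\to 0$ as $\epsilon\to 0$, hence $\tau((a-\epsilon)_+)\to\tau(a)$. It therefore suffices to show $\tau((a-\epsilon)_+)\le\liminf_k\tau(a_k)$ for each fixed $\epsilon>0$.

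Fix $\epsilon>0$ and let $k$ be large enough that $\|a-a_k\|<\epsilon$. By Proposition \ref{jiu27-2} there is a contraction $c\in A^1$ with $(a-\epsilon)_+=c^*a_kc$. Since $a_k^{1/2}\in\Ped(A)$ and $\Ped(A)$ is an ideal, $a_k^{1/2}c\in\Ped(A)$, so applying the defining identity $\tau(z^*z)=\tau(zz^*)$ to $z=a_k^{1/2}c$ gives $\tau((a-\epsilon)_+)=\tau(c^*a_kc)=\tau(a_k^{1/2}cc^*a_k^{1/2})$. Because $cc^*\le 1$, we have $a_k^{1/2}cc^*a_k^{1/2}\le a_k$, with both sides lying in $\Her(a_k)\subseteq\Ped(A)$; order preservation of $\tau|_{\Her(a_k)}$ then yields $\tau(a_k^{1/2}cc^*a_k^{1/2})\le\tau(a_k)$. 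Hence $\tau((a-\epsilon)_+)\le\tau(a_k)$ for all large $k$, so $\tau((a-\epsilon)_+)\le\liminf_k\tau(a_k)$; letting $\epsilon\to 0$ and using the previous step completes the proof.

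This is a short argument once the quoted facts are in place, and there is no single hard step; the two points that require attention are as follows. The first is the appeal to order preservation of $\tau$ on positive elements, used for $\tau|_{\Her(a_k)}$: for a $2$-quasitrace this is the property recalled from \cite{BH} (and $\tau$ restricts to a $2$-quasitrace on the $C^*$-algebra $\Her(a_k)$ precisely because $\Her(a_k)\subseteq\Ped(A)$), while for a general quasitrace one instead invokes the order preservation of arbitrary quasitraces, also from \cite{BH}. The second is the justification of the Pedersen-ideal facts used in the first paragraph ($\Her(a)\subseteq\Ped(A)$ and closure of $\Ped(A)_+$ under square roots, more generally under $f$-functional calculus with $f(0)=0$); these are routine consequences of the description of $\Ped(A)_+$ above and should be cited or verified.
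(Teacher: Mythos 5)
Your proof is correct and follows essentially the same route as the paper's: continuity of $\tau$ on the commutative $C^*$-subalgebra generated by $a$ to reduce to the cut-down $(a-\ep)_+$, then Proposition \ref{jiu27-2} to write $(a-\ep)_+=c^*a_kc$, the tracial identity to pass to $a_k^{1/2}cc^*a_k^{1/2}\le a_k$, and order preservation to conclude. The only differences are bookkeeping (the paper places the contraction in $C^*(a,b)\subset\Ped(A)$ where you use the ideal property of $\Ped(A)$ to keep $a_k^{1/2}c$ in the domain), so there is nothing to change.
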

\begin{proof}
Let $a\in \Ped(A)_+,$ 
then $C^*(a)\subset \Ped(A)$ 
(see \cite[5.6.2]{Pedbk}). 
By \cite[II.2.3]{BH}, $\tau$ is continuous on $C^*(a).$
Let $\ep>0,$ then there is $\dt>0$ such that 
$\tau(a)-\ep<\tau((a-\dt)_+).$ 
Let $b\in \Ped(A)_+$ with $\|a-b\|<\dt.$ 
By Proposition \ref{jiu27-2}, 
there is a contraction $r\in C^*(a,b)^1\subset\Ped(A)_+$
such that $(a-\dt)_+=r^*br.$
Then 
$
\tau(a)-\ep\le \tau((a-\dt)_+)
=\tau(r^*br)=\tau(b^{1/2}rr^*b^{1/2})\le \tau(b), 
$
which shows $\tau$ is lower semicontinuous.  
\end{proof}

Adopt the convention in \cite[2.7]{FLosc}, we have  the following: 
\begin{df}\label{Dqtr}
Let $A$ be a \CA. 
A densely  defined 2-quasitrace on $A\otimes \K$ is a 2-quasitrace 
$\tau:\Ped(A\otimes \K)\to\C.$ 
Denote by $\TQT(A)$ the set of all densely defined 2-quasitraces 
on $A\otimes \K.$  


The partial order on $\TQT(A)$ is the canonical one: 
For $\tau_1,\tau_2\in\TQT(A),$ we write 
$\tau_1\le \tau_2$ if $\tau_1(a)\le \tau_2(a)$ for all $a\in \Ped(A\otimes\K)_+.$
The topology on $\TQT(A)$ is defined by pointwise convergence: 
A net $\{\tau_i\}\subset \TQT(A)$ is converge to $\tau\in\TQT(A)$
if and only if 
$\lim_i\tau_i(a)=\tau(a)$ for all $a\in\Ped(A\otimes\K).$ 
\end{df}

\begin{rem}\label{shi05-r2}
{\bf (1)} 
A quasitrace 
$\tau:\Ped(A\otimes \K)\to\C$ is automatically a 2-quasitrace, due to the fact that $\K\otimes M_2\cong \K.$
{\bf (2)} If $\tau_1,\tau_2\in\TQT(A)$ and $\tau_1\le \tau_2,$ then $\tau_2-\tau_1\in \TQT(A).$ 
\end{rem}

\begin{df}\label{jiu21-d}
(\cite[4.1, p\ 984]{ERS})
Let $A$ be a \CA. 
A lower semicontinuous 
2-quasitrace on $A$ is 
a lower semicontinuous 
map $\tau:(A\otimes\K)_+\to [0,\infty]$ 
such that $\tau(0)=0,$  
$\tau(x^*x)=\tau(xx^*)$ for all $x\in A\otimes\K,$
$\tau(a+b)=\tau(a)+\tau(b)$ 
for all $a,b\in(A\otimes\K)_+$ with $ab=ba.$ 
Let $\QT_2(A)$ be the set of all lower semicontinuous 
2-quasitraces on $A$. 

The partial order on $\QT_2(A)$ is defined by the following: 
For $\tau_1,\tau_2\in\QT_2(A),$ we write 
$\tau_1\le \tau_2$ if $\tau_1(a)\le \tau_2(a)$ for all $a\in (A\otimes\K)_+.$
The topology on $\QT_2(A)$ is defined by the following: 
A net $\{\tau_i\}\subset \QT_2(A)$ is converge to $\tau\in  \QT_2(A),$ if and only if 
$
\limsup_i \tau_i((a-\ep)_+)\le \tau(a)\le \liminf_i\tau_i(a)
$
for all $a\in (A\otimes\K)_+$ and all $\ep>0.$

Let $\tau\in\QT_2(A).$ $\tau$ is called densely finite, 
if $\tau(a)<+\infty$ for all $a\in\Ped(A\otimes\K)_+.$ 
Let $\tau_\infty\in \QT_2(A)$ be the map such that $\tau_{\infty}(a)=\infty$ for all $a\in\K_+\nzero.$
\end{df} 

\begin{prop}\label{jiu27-3}
Let $A$ be a simple \CA. 
For any $\tau\in\QT_2(A)\setminus\{\tau_\infty\},$
$\tau$ is densely finite. 
\end{prop}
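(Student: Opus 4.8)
The plan is to show that if $\tau \in \QT_2(A)$ fails to be densely finite, then $\tau = \tau_\infty$. So suppose $\tau(a_0) = +\infty$ for some $a_0 \in \Ped(A\otimes\K)_+$. First I would reduce to showing that $\tau(b) = +\infty$ for every nonzero $b \in \K_+$ (identifying $\K \cong \C \otimes \K \subset A \otimes \K$ via a fixed rank-one projection, or more precisely working inside $A \otimes \K$ with elements of the form $c \otimes e$ for $c \in A_+$ and $e$ a rank-one projection in $\K$); once $\tau$ is $+\infty$ on all such elements, lower semicontinuity and the definition of $\tau_\infty$ on $\K_+ \nzero$ will force $\tau = \tau_\infty$ after checking $\tau$ agrees with $\tau_\infty$ on all of $(A \otimes \K)_+$.

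The key step uses simplicity of $A$ together with the Cuntz-comparison machinery already set up in the excerpt. Since $A$ is simple, $A \otimes \K$ is simple, and $a_0 \in \Ped(A\otimes\K)_+$ is a nonzero positive element of the (minimal dense) Pedersen ideal. Given any nonzero $b \in (A\otimes\K)_+$, simplicity implies (a standard fact about simple $C^*$-algebras) that there exist $n \in \N$ and $x_1, \dots, x_n \in A\otimes\K$ with $a_0 \lesssim \mathrm{diag}(b, b, \dots, b)$ ($n$ copies), i.e. $(a_0 - \ep)_+ \le \sum_i x_i^* b x_i$ for suitable approximants; more carefully, I would invoke that in a simple $C^*$-algebra any nonzero hereditary subalgebra's positive elements are Cuntz-below a multiple of any fixed nonzero positive element. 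Then, using that $\tau$ is a lower semicontinuous $2$-quasitrace (so it is subadditive on sums, order-preserving in the Cuntz sense, and $\tau((a_0-\ep)_+) \to \tau(a_0) = \infty$ as $\ep \to 0$ by lower semicontinuity), we get $\infty = \tau(a_0) \le \sup_\ep \tau((a_0-\ep)_+) \le n\, \tau(b)$, whence $\tau(b) = \infty$. Applying this with $b = c \otimes e$ for arbitrary nonzero $c \in A_+$ and rank-one $e$, and also with $b$ ranging over $\K_+ \nzero$, shows $\tau$ is identically $+\infty$ on $(A\otimes\K)_+ \nzero$, hence $\tau = \tau_\infty$.

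The main obstacle I anticipate is making the Cuntz-comparison step fully rigorous for \emph{densely defined lower semicontinuous 2-quasitraces} rather than bounded traces: I need the inequality $\tau(a_0) \le n\,\tau(b)$, which requires that $\tau$ behaves well under the direct-sum/block-diagonal operation and under Cuntz subequivalence. The former follows from $\tau(x \oplus y) = \tau(x) + \tau(y)$ together with the isomorphism $\K \otimes M_2 \cong \K$ (as recorded in Remark \ref{shi05-r2}(1) and used in Definition \ref{Dcuntz}); the latter — that $a \lesssim b$ implies $\tau(a) \le \tau(b)$ for $\tau \in \QT_2(A)$ — is a known property of lower semicontinuous 2-quasitraces (it is how functionals on $\Cu(A)$ arise), combined with Proposition \ref{jiu27-2} to pass from the Cuntz relation to explicit identities $(a_0-\ep)_+ = c^* (\sum_i x_i^* b x_i) c$. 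I would cite \cite{ERS} (or \cite{BH}) for the order-preserving property and assemble these pieces; the simplicity input (existence of the finite $n$) is the classical "local" characterization of simplicity via Cuntz comparison in $A \otimes \K$.
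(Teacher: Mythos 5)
Your overall strategy (contrapositive via Cuntz comparison) is genuinely different from the paper's, which is a three-line argument: the set $I=\{x\in A\otimes\K:\tau(x^*x)<\infty\}$ is a nonzero algebraic ideal because $\tau\neq\tau_\infty$, hence dense by simplicity, hence contains $\Ped(A\otimes\K)$ by minimality of the Pedersen ideal among dense ideals (and then $a^{1/2}\in C^*(a)\subset\Ped(A\otimes\K)$ for $a\in\Ped(A\otimes\K)_+$ gives $\tau(a)=\tau((a^{1/2})^*a^{1/2})<\infty$). Your route can be made to work, but as written its central inequality has a genuine gap.

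The chain $\infty=\tau(a_0)\le\sup_\ep\tau((a_0-\ep)_+)\le n\,\tau(b)$ does not follow from what you set up. First, the witnesses for Cuntz subequivalence give $(a_0-\ep)_+=c^*\bigl(\sum_i x_i^*bx_i\bigr)c$ with $x_i$ depending on $\ep$, so the best you get is $\tau((a_0-\ep)_+)\le\bigl(\sum_i\|x_ic\|^2\bigr)\tau(b)$ (up to the quasi-subadditivity constant; note also that $2$-quasitraces are \emph{not} subadditive on non-commuting sums, one only has $\tau(a+b)\le 2(\tau(a)+\tau(b))$), and the constant blows up as $\ep\to0$, so you cannot pass to the supremum. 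What comparison genuinely controls is the dimension function: $a_0\lesssim b^{\oplus n}$ yields $d_\tau(a_0)\le n\,d_\tau(b)$, together with $\tau(a_0)\le\|a_0\|\,d_\tau(a_0)$. But $d_\tau(b)=\infty$ does \emph{not} imply $\tau(b)=\infty$ (consider the canonical trace on $\K$ and an infinite-rank trace-class operator), so your argument does not reach $\tau=\tau_\infty$. The repair: if some nonzero $b$ had $\tau(b)<\infty$, replace it by $b'=(b-\delta)_+\neq0$, for which $d_\tau(b')\le\tau(f_{\delta/2}(b))\le(2/\delta)\tau(b)<\infty$; since $a_0\in\Ped(A\otimes\K)_+$ one has $a_0\lesssim (b')^{\oplus n}$ for a \emph{single} $n$ --- this uses membership in the Pedersen ideal; your quoted ``standard fact'' that every positive element of a hereditary subalgebra is Cuntz-below finitely many copies of any fixed nonzero positive element is false (already in $\K$), and without $a_0\in\Ped$ one only gets $(a_0-\ep)_+\lesssim(b')^{\oplus n(\ep)}$ --- whence $\tau(a_0)\le\|a_0\|\,n\,d_\tau(b')<\infty$, a contradiction. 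With these two repairs (cut $b$ down so that $d_\tau$ is finite, and route through $d_\tau$ plus $\tau\le\|\cdot\|\,d_\tau$) your argument closes, but it remains considerably heavier than the paper's ideal-theoretic one-liner.
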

\begin{proof}
Let $\tau\in  \QT_2(A)\setminus\{\tau_\infty\},$ then 
$I:= \{x\in A\otimes \K:\tau(x^*x)<\infty\}\neq\{0\}$ is an algebraic ideal of $A\otimes\K.$ 
$A\otimes\K$ is simple implies $I$ is dense in $A\otimes\K.$ 
Then $\Ped(A\otimes\K)\subset I.$ 
\end{proof} 

\begin{prop}\label{shier25-1}
Let $\tau\in\TQT(A)$ and let $a\in (A\otimes\K)_+.$ 
Let $\{e_\lambda\}=\{e\in\Ped(A\otimes\K)_+:\|e\|<1\}$ be an approximate unit of $A.$ 
Then $\lim_\lambda\tau(a^{1/2}e_\lambda a^{1/2})=\lim_{\ep\to 0}\tau((a-\ep)_+).$
\end{prop}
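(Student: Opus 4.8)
The plan is to show that both sides of the claimed identity compute the common value
\[
L:=\sup_{\ep>0}\tau((a-\ep)_+)=\lim_{\ep\to 0}\tau((a-\ep)_+)\in[0,\infty],
\]
so that Proposition \ref{shier25-1} just says the two natural formulas for the canonical ``extension'' of $\tau$ at $a$ agree. First I would record the bookkeeping facts that make every quantity meaningful and the two limits monotone. For each $\ep>0$ one has $(a-\ep)_+\in\Ped(A\otimes\K)$, since $f_{\ep/2}(a)(a-\ep)_+=(a-\ep)_+$ exhibits $(a-\ep)_+$ as an element of the Pedersen ideal; as $\ep\downarrow 0$ the elements $(a-\ep)_+$ increase in $C^*(a)$, so by order preservation of $\tau$ on $\Ped(A\otimes\K)_+$ (\cite{BH}) the net $\tau((a-\ep)_+)$ is monotone and its limit equals $L$. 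Likewise each $a^{1/2}e_\lambda a^{1/2}$ lies in $\Ped(A\otimes\K)$ (an ideal containing $e_\lambda$), and since $\{e_\lambda\}$ is directed with $e\le e'\Rightarrow a^{1/2}ea^{1/2}\le a^{1/2}e'a^{1/2}$, the net $\tau(a^{1/2}e_\lambda a^{1/2})$ is monotone and convergent. Throughout I will use freely the trace identity $\tau(xx^*)=\tau(x^*x)$, order preservation and lower semicontinuity of $\tau$ on $\Ped(A\otimes\K)_+$ (Proposition \ref{jiu25-3} and \cite{BH}), the continuity of $\tau$ on $C^*(h)$ for $h\in\Ped(A\otimes\K)_+$ (as in the proof of Proposition \ref{jiu25-3}), and the identity $\tau(a^{1/2}e_\lambda a^{1/2})=\tau(e_\lambda^{1/2}ae_\lambda^{1/2})$ obtained by applying the trace identity to $x=a^{1/2}e_\lambda^{1/2}$.

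For the inequality $L\le\lim_\lambda\tau(a^{1/2}e_\lambda a^{1/2})$, I would fix $\ep>0$ and note that $e_\lambda^{1/2}(a-\ep)_+e_\lambda^{1/2}\to (a-\ep)_+$ in norm, with every term in $\Ped(A\otimes\K)_+$; lower semicontinuity then gives $\tau((a-\ep)_+)\le\liminf_\lambda\tau(e_\lambda^{1/2}(a-\ep)_+e_\lambda^{1/2})$. Since $(a-\ep)_+\le a$ forces $e_\lambda^{1/2}(a-\ep)_+e_\lambda^{1/2}\le e_\lambda^{1/2}ae_\lambda^{1/2}$, order preservation yields $\tau(e_\lambda^{1/2}(a-\ep)_+e_\lambda^{1/2})\le\tau(e_\lambda^{1/2}ae_\lambda^{1/2})=\tau(a^{1/2}e_\lambda a^{1/2})$, hence $\tau((a-\ep)_+)\le\lim_\lambda\tau(a^{1/2}e_\lambda a^{1/2})$; taking the supremum over $\ep$ gives the claim.

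For the reverse inequality I would fix $\lambda$ and, by the trace identity, reduce to proving $\tau(e_\lambda^{1/2}ae_\lambda^{1/2})\le L$. Since $e_\lambda^{1/2}ae_\lambda^{1/2}\in\Ped(A\otimes\K)_+$, continuity of $\tau$ on $C^*(e_\lambda^{1/2}ae_\lambda^{1/2})$ gives $\tau(e_\lambda^{1/2}ae_\lambda^{1/2})=\lim_{\delta\to 0}\tau\bigl((e_\lambda^{1/2}ae_\lambda^{1/2}-\delta)_+\bigr)$. Now I fix $\delta>0$ and choose $\ep\in(0,\delta)$. Because $a-(a-\ep)_+=\min(a,\ep)$ has norm $\le\ep$, we get $\|e_\lambda^{1/2}ae_\lambda^{1/2}-e_\lambda^{1/2}(a-\ep)_+e_\lambda^{1/2}\|\le\ep<\delta$, so Proposition \ref{jiu27-2} supplies a contraction $c\in(A\otimes\K)^1$ with $(e_\lambda^{1/2}ae_\lambda^{1/2}-\delta)_+=c^*bc$, where $b:=e_\lambda^{1/2}(a-\ep)_+e_\lambda^{1/2}\in\Ped(A\otimes\K)_+$. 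Applying the trace identity to $b^{1/2}c$ and using $cc^*\le 1$ with order preservation, then applying it to $e_\lambda^{1/2}(a-\ep)_+^{1/2}$ and using $e_\lambda\le 1$ with order preservation,
\[
\tau\bigl((e_\lambda^{1/2}ae_\lambda^{1/2}-\delta)_+\bigr)=\tau(b^{1/2}cc^*b^{1/2})\le\tau(b)=\tau\bigl((a-\ep)_+^{1/2}e_\lambda(a-\ep)_+^{1/2}\bigr)\le\tau((a-\ep)_+)\le L .
\]
Letting $\delta\to 0$ gives $\tau(a^{1/2}e_\lambda a^{1/2})=\tau(e_\lambda^{1/2}ae_\lambda^{1/2})\le L$, and taking the supremum over $\lambda$ finishes the proof.

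The hard part is this reverse inequality. Since $a$ itself need not lie in $\Ped(A\otimes\K)$, one cannot simply estimate $\tau(a^{1/2}e_\lambda a^{1/2})\le\tau(a)$; and since $2$-quasitraces are not subadditive in general, one cannot split the obvious bound $e_\lambda^{1/2}ae_\lambda^{1/2}\le e_\lambda^{1/2}(a-\ep)_+e_\lambda^{1/2}+\ep e_\lambda$. The device that circumvents both obstructions is the two-parameter cut-down, choosing $\ep$ strictly below $\delta$ so that Proposition \ref{jiu27-2} converts the approximate comparison into an exact Cuntz comparison implemented by a contraction, after which the trace identity and order preservation close the estimate. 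The point that needs care is precisely this bookkeeping: keeping every element to which $\tau$ is applied inside the Pedersen ideal, so that order preservation, lower semicontinuity, and continuity on singly generated subalgebras are all legitimately available.
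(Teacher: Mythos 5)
Your argument is correct, and it shares the paper's overall skeleton: both directions are reduced to the cut-downs $(a-\ep)_+$ via the trace identity $\tau(x^*x)=\tau(xx^*)$, order preservation, and careful bookkeeping inside $\Ped(A\otimes\K)$. The execution of each half differs, though. For $\lim_{\ep\to 0}\tau((a-\ep)_+)\le\lim_\lambda\tau(a^{1/2}e_\lambda a^{1/2})$ the paper uses the functional-calculus inequality $a^{1/2}f_{\ep/2}(a)a^{1/2}\ge(a-\ep)_+$ together with the fact that (scalar multiples of) $f_{\ep/2}(a)$ sit cofinally in the chosen approximate unit, whereas you apply lower semicontinuity (Proposition \ref{jiu25-3}) along the norm-convergent net $e_\lambda^{1/2}(a-\ep)_+e_\lambda^{1/2}\to(a-\ep)_+$; your route sidesteps the minor point that $\|f_{\ep/2}(a)\|$ may equal $1$ while the $e_\lambda$ have norm $<1$, at the cost of invoking semicontinuity where a pointwise inequality suffices. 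For the reverse inequality the paper simply cites lower semicontinuity (plus monotonicity) to get $\tau(e_\lambda^{1/2}ae_\lambda^{1/2})=\lim_{\ep\to 0}\tau(e_\lambda^{1/2}(a-\ep)_+e_\lambda^{1/2})\le\lim_{\ep\to 0}\tau((a-\ep)_+)$, whereas you re-derive exactly this bound from scratch by rerunning the mechanism behind Proposition \ref{jiu25-3} — continuity of $\tau$ on $C^*(e_\lambda^{1/2}ae_\lambda^{1/2})$ combined with Proposition \ref{jiu27-2} and the two-parameter choice $\ep<\delta$. That is logically equivalent but longer; citing Proposition \ref{jiu25-3} together with $e_\lambda^{1/2}(a-\ep)_+e_\lambda^{1/2}\le e_\lambda^{1/2}ae_\lambda^{1/2}$ would compress your second half to two lines. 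No gaps: in particular your verifications that every element fed to $\tau$ lies in $\Ped(A\otimes\K)_+$ are exactly the ones needed.
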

\begin{proof}
Since $a^{1/2}f_{\ep/2}(a)a^{1/2}\ge (a-\ep)_+,$ 
we have 
$\lim_\lambda\tau(a^{1/2}e_\lambda a^{1/2})\ge \lim_{\ep\to 0}\tau((a-\ep)_+).$ Since $\tau$ is lower semicontinuous on $\Ped(A\otimes\K)_+$ {{(see Proposition \ref{jiu25-3}),}} 
we also  have $\tau(a^{1/2}e_\lambda a^{1/2})=\tau(e_\lambda^{1/2} ae_\lambda^{1/2})=\lim_{\ep\to 0}\tau(e_\lambda^{1/2} (a-\ep)_+e_\lambda^{1/2})\le \lim_{\ep\to 0}\tau((a-\ep)_+).$
\end{proof}

\begin{df}\label{shier20-1}
Let $X,Y$ be topological spaces and $X,Y$ are partially ordered sets. 
A homeomorphism $\phi: X\to Y$ is said to be  ordered, if both $\phi$ and $\phi^{-1}$ are order preserving. 
\end{df}

\begin{prop}\label{shier22-3}
Let $A$ be a simple \CA.  
For $\tau\in \TQT(A),$ define $\wtilde \tau:(A\otimes\K)_+\to[0,\infty],$ 
$a\mapsto \lim_{\ep\to 0}\tau((a-\ep)_+).$
Then the map 
$\chi:\tau\mapsto \wtilde \tau$ 
is an 
ordered affine homeomorphism  from 
$\TQT(A)$ to $\QT_2(A)\setminus\{\tau_\infty\}.$
\end{prop}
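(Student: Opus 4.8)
The plan is to establish that $\chi$ is a well-defined bijection that is affine, and then upgrade it to a homeomorphism by checking continuity both ways. First I would check that $\chi$ is well-defined: given $\tau\in\TQT(A)$, the map $\wtilde\tau(a)=\lim_{\ep\to 0}\tau((a-\ep)_+)$ is an increasing limit (since $(a-\ep)_+$ decreases as $\ep$ decreases and $\tau$ is order preserving on $\Ped(A\otimes\K)_+$), so the limit exists in $[0,\infty]$. The radial additivity $\wtilde\tau(x^*x)=\wtilde\tau(xx^*)$ follows from $((x^*x)-\ep)_+=x^*g_\ep(xx^*)x$ for a suitable function $g_\ep$ together with the corresponding identity for $\tau$ on the Pedersen ideal; orthogonal additivity $\wtilde\tau(a+b)=\wtilde\tau(a)+\wtilde\tau(b)$ for $ab=ba$ reduces to the same statement for $\tau$ on $\Ped(A\otimes\K)_+$ after noting $((a+b)-\ep)_+$ can be sandwiched between sums of $((a-\dt)_+)+((b-\dt)_+)$-type elements using functional calculus on the commutative algebra $C^*(a,b)$. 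Lower semicontinuity of $\wtilde\tau$ is where Proposition \ref{shier25-1} enters: if $a_i\to a$, then for each fixed $\ep$, eventually $\|a_i-a\|$ is small, so by Proposition \ref{jiu27-2} we get $(a-\ep)_+=c^*a_ic$ with $c$ a contraction, hence $\tau((a-\ep)_+)=\tau(c^*a_ic)=\tau(a_i^{1/2}cc^*a_i^{1/2})\le\wtilde\tau(a_i)$; taking $\liminf_i$ and then $\ep\to 0$ gives $\wtilde\tau(a)\le\liminf_i\wtilde\tau(a_i)$. Finally $\wtilde\tau\neq\tau_\infty$ since $\wtilde\tau$ is finite on positive elements of $\Ped(A\otimes\K)$ (it agrees with $\tau$ there, as $\tau$ is continuous on each $C^*(a)$ by \cite[II.2.3]{BH} and $(a-\ep)_+\to a$).

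Next I would construct the inverse. Given $\sigma\in\QT_2(A)\setminus\{\tau_\infty\}$, Proposition \ref{jiu27-3} says $\sigma$ is densely finite, so its restriction $\sigma|_{\Ped(A\otimes\K)}$ takes finite values; this restriction is a densely defined 2-quasitrace, i.e.\ an element of $\TQT(A)$, once one checks linearity on commutative subalgebras (which follows from lower semicontinuity plus orthogonal additivity, a standard argument on $C^*(a)$ for $a$ in the Pedersen ideal). Call this restriction map $\rho:\sigma\mapsto\sigma|_{\Ped}$. I would then verify $\chi\circ\rho=\id$ and $\rho\circ\chi=\id$: the composite $\rho\circ\chi$ sends $\tau$ to $\wtilde\tau|_{\Ped}$, which equals $\tau$ because on $\Ped(A\otimes\K)_+$ we have $\wtilde\tau(a)=\lim_{\ep\to0}\tau((a-\ep)_+)=\tau(a)$ by continuity of $\tau$ on $C^*(a)$; the composite $\chi\circ\rho$ sends $\sigma$ to $\wtilde{\sigma|_{\Ped}}$, which equals $\sigma$ because $\sigma(a)=\lim_{\ep\to0}\sigma((a-\ep)_+)=\lim_{\ep\to0}\sigma|_{\Ped}((a-\ep)_+)$ by lower semicontinuity of $\sigma$ together with $\sigma((a-\ep)_+)\le\sigma(a)$. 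Affinity of $\chi$ (and of $\rho$) is immediate from the pointwise definitions, since $((a-\ep)_+)$ does not interact with scalar addition of traces; that $\chi$ is order preserving in both directions is also immediate from the pointwise order on both sides, once the bijection is in place.

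For the topological part, I would compare the defining convergences. On $\TQT(A)$ convergence is pointwise on $\Ped(A\otimes\K)$; on $\QT_2(A)$ it is the condition $\limsup_i\sigma_i((a-\ep)_+)\le\sigma(a)\le\liminf_i\sigma_i(a)$ for all $a$ and $\ep$. To see $\chi$ is continuous: if $\tau_i\to\tau$ in $\TQT(A)$, then for $a\in(A\otimes\K)_+$ and $\ep>0$ we have $\wtilde{\tau_i}((a-\ep)_+)=\tau_i((a-\ep)_+)\to\tau((a-\ep)_+)\le\wtilde\tau(a)$ giving the $\limsup$ bound, and $\wtilde{\tau_i}(a)\ge\tau_i((a-\ep)_+)\to\tau((a-\ep)_+)$ for every $\ep$, so $\liminf_i\wtilde{\tau_i}(a)\ge\sup_\ep\tau((a-\ep)_+)=\wtilde\tau(a)$, giving the other bound. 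For continuity of $\rho=\chi^{-1}$: if $\sigma_i\to\sigma$ in $\QT_2(A)\setminus\{\tau_\infty\}$ and $b\in\Ped(A\otimes\K)_+$, I must show $\sigma_i(b)\to\sigma(b)$; the $\liminf$ half gives $\sigma(b)\le\liminf_i\sigma_i(b)$ directly, and for the reverse I would use that $b\in\Ped$ means $b\le M f_\ep(b_0)$-type domination allowing one to replace $\sigma_i(b)$ by $\sigma_i((b'-\ep)_+)$ for a slightly larger $b'\in\Ped_+$ up to small error, controlled uniformly because everything lives inside one fixed hereditary subalgebra with a compact unit — this is the step I expect to be the main obstacle, since it requires turning the one-sided upper bound in the $\QT_2$-topology into genuine convergence on the Pedersen ideal, and care is needed that the approximating element and the error estimate can be chosen independently of $i$. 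Once that is done, $\chi$ and $\rho$ are mutually inverse continuous order-preserving affine maps, so $\chi$ is the desired ordered affine homeomorphism.
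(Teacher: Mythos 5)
Your overall architecture matches the paper's: well-definedness of $\wtilde\tau$ as an element of $\QT_2(A)\setminus\{\tau_\infty\}$, the inverse via restriction to $\Ped(A\otimes\K)$ using dense finiteness (Proposition \ref{jiu27-3}), mutual inversion via continuity of $\tau$ on $C^*(a)$ and lower semicontinuity of $\sigma$, and then a comparison of the two topologies. The one difference in route is that you re-derive by hand that $\wtilde\tau$ is a lower semicontinuous $2$-quasitrace, whereas the paper simply cites \cite[Proposition 2.7]{L22}; your sketch of that part is plausible, though note that in your lower semicontinuity argument the element $a_i^{1/2}cc^*a_i^{1/2}$ need not lie in $\Ped(A\otimes\K)$, so the inequality $\tau(a_i^{1/2}cc^*a_i^{1/2})\le\wtilde\tau(a_i)$ needs a word of justification (cut down by an approximate unit first, as in Proposition \ref{shier25-1}).

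The genuine gap is exactly the step you flag: showing that convergence $\sigma_i\to\sigma$ in the $\QT_2$-topology forces $\sigma_i(b)\to\sigma(b)$ for every $b\in\Ped(A\otimes\K)_+$. Your proposed mechanism (``$b\le Mf_\ep(b_0)$-type domination \dots controlled uniformly because everything lives inside one fixed hereditary subalgebra with a compact unit'') does not identify where the uniformity in $i$ actually comes from, and there is no ``compact unit'' to appeal to. The paper isolates this as Proposition \ref{jiu29-2} in the Appendix, and the proof there uses simplicity in an essential way: from $\limsup_i\sigma_i((a-\ep)_+)\le\sigma(a)<\infty$ one gets $\sigma_i((a-\ep)_+)\le\sigma(a)+1$ for large $i$; algebraic simplicity of $\Ped(A\otimes\K)$ (Proposition \ref{shi15-1}) gives a representation $a=\sum_{k=1}^n y_k(a-2\ep)_+z_k$, which yields a bound $d_{\sigma_i}(a)\le M$ \emph{uniform in} $i$; this uniform bound is what converts into a uniform modulus of continuity $\sigma_i(a)\le\sigma_i((a-\eta)_+)+\theta$ with $\eta=\theta/M$, after which the squeeze $\limsup_i\sigma_i(a)\le\sigma(a)+\theta\le\liminf_i\sigma_i(a)+\theta$ closes the argument. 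Without this simplicity-based uniform bound the step does not go through, so you should either supply it or invoke the equivalence of pointwise convergence on $\Ped(A\otimes\K)_+$ with the $\QT_2$-topology as a separate lemma, as the paper does.
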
 
\begin{proof}
Using the notations in Proposition \ref{shier25-1},
we have $\wtilde\tau(a)=\lim_\lambda\tau(a^{1/2}e_\lambda a^{1/2})$
for all  $\tau\in\TQT(A)$ and all  $a\in (A\otimes\K)_+.$ 
By \cite[Proposition 2.7]{L22}, 
$\widetilde \tau\in \QT_2(A)\setminus\{\tau_\infty\}$
and $\widetilde \tau$ extends $\tau|_{\Ped(A\otimes\K)_+},$ which imply $\chi$ is injective. 
Let $\tau\in  \QT_2(A)\setminus\{\tau_\infty\},$ then $\tau$ is densely finite by Proposition \ref{jiu27-3}. 
We can extend the map 
$\tau|_{\Ped(A\otimes\K)_+}$
to a map  $\bar  \tau:\Ped(A\otimes\K)\to\C$
canonically, 
and $\bar   \tau\in \TQT(A).$
Since $\tau$ is lower semicontinuous, 
$\chi(\bar   \tau)(a)
=\lim_{\ep\to 0}\bar   \tau((a-\ep)_+)=\tau(a)$ for all $a\in(A\otimes\K)_+.$ 
Hence $\chi(\bar   \tau)=\tau$ and 
$\chi$ is surjective. 
Let $\{\tau_i\}\subset \TQT(A)$ and $\tau\in\TQT(A).$
Then $\lim_i\tau_i=\tau\Leftrightarrow$ $\tau_i$ converges to $\tau$ point-wisely on $\Ped(A\otimes\K)_+\Leftrightarrow$ 
$\chi(\tau_i)$ converges to $\chi(\tau)$ point-wisely on $\Ped(A\otimes\K)_+\Leftrightarrow$  $\lim_i\chi(\tau_i)=\tau$ (by Proposition \ref{jiu29-2} (ii) in the Appendix).
Hence $\chi$ is a homeomorphism. 
It is routine to check that $\chi$ is affine and ordered. 
\end{proof}

\begin{rem} 
By above proposition, {{for a simple \CA\ $A,$}} 
we will identify $\TQT(A)$ with $\QT_2(A)\setminus\{\tau_\infty\}$ 
whenever it is convenience. 
In particular, 
for $\tau\in \TQT(A)$ and $a\in (A\otimes\K)_+\setminus\Ped(A\otimes\K)_+,$ 
$\tau(a):=\lim_{\ep\to 0}\tau((a-\ep)_+).$
\end{rem}

\begin{df}
For $\tau\in \QT_2(A)\cup \TQT(A),$ 
we define 
$d_\tau\in {\rm F}(\Cu(A))$ by 
$d_\tau([a]):=\lim_{n}\tau(f_{1/n}(a))$
for all $a\in (A\otimes\K)_+. 
$
We also write $d_\tau(a)$ for $d_\tau([a])$ when it is convenient.  
Define $\Delta: \QT_2(A)\cup \TQT(A)\to  {\rm F}(\Cu(A)),$
$\tau\mapsto d_\tau.$
\end{df}

\begin{thm}\label{jiu27-4}
{\rm (\cite[Theorem 4.4]{ERS})} 
Both $\QT_2(A)$ and ${\rm F}(\Cu(A))$ are compact and Hausdorff. 
$\Delta: \QT_2(A)\to {\rm F}(\Cu(A)),$ $\tau\mapsto d_\tau$
is an ordered 
affine homeomorphism.  
\end{thm}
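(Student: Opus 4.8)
The plan is to first produce an explicit inverse of $\Delta$, thereby getting the bijection, and then to check that it intertwines the stated topologies and convex structures. First I would verify that $\Delta$ is well defined: for $\tau\in\QT_2(A)$ the quantity $d_\tau([a])=\sup_n\tau(f_{1/n}(a))$ depends only on the Cuntz class of $a$, and it is additive, order preserving and vanishes at $0$ because $\tau$ enjoys these properties on $(A\otimes\K)_+$ (using Proposition \ref{jiu27-2} to pass between Cuntz-subequivalent elements); preservation of suprema of increasing sequences follows from lower semicontinuity of $\tau$, since a rapidly increasing sequence in $\Cu(A)$ can be represented in the form $[(a-\ep_k)_+]$ with $\ep_k\downarrow 0$ for a single $a$. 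Next I would record the inversion formula
\[
\tau(a)=\int_0^\infty d_\tau([(a-t)_+])\,dt \quad (a\in(A\otimes\K)_+),
\]
which one obtains by approximating $\mathrm{id}_{[0,\|a\|]}$ from below by step functions in the functional calculus of $a$ and using lower semicontinuity of $\tau$; it gives injectivity of $\Delta$ at once.

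For surjectivity, given $\lambda\in{\rm F}(\Cu(A))$ I would define $\tau_\lambda(a):=\int_0^\infty\lambda([(a-t)_+])\,dt$ on $(A\otimes\K)_+$ and check that $\tau_\lambda\in\QT_2(A)$. The identity $\tau_\lambda(x^*x)=\tau_\lambda(xx^*)$ comes from $(x^*x-t)_+\sim(xx^*-t)_+$ in $\Cu(A)$; lower semicontinuity of $\tau_\lambda$ comes from lower semicontinuity (in $t$) of $t\mapsto\lambda([(a-t)_+])$ together with Fatou's lemma and the fact that $\lambda$ preserves suprema of increasing sequences; and additivity on a commuting pair $a,b$ is obtained by passing to the unitization of $C^*(a,b)$, where the functionals on the Cuntz semigroup are precisely integration against lower semicontinuous $[0,\infty]$-valued functions on the spectrum, so that $\lambda([(\cdot-t)_+])$ behaves additively under the integral sign. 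Since $\K\cong M_2(\K)$, $\tau_\lambda$ is automatically a $2$-quasitrace (cf.\ Remark \ref{shi05-r2}). Applying the first step to the subsemigroup generated by a single positive element then yields $\Delta(\tau_\lambda)=\lambda$, so $\Delta$ is a bijection.

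The topological statements follow next. The two topologies are defined by the same ``$\limsup$ on $(a-\ep)_+$, $\liminf$ on $a$'' pattern, and combining this with the inversion formula and monotone convergence shows that $\tau_i\to\tau$ in $\QT_2(A)$ if and only if $d_{\tau_i}\to d_\tau$ in ${\rm F}(\Cu(A))$; hence $\Delta$ is a homeomorphism onto ${\rm F}(\Cu(A))$. For compactness of ${\rm F}(\Cu(A))$ I would take a net $\{\lambda_i\}$, pass by Tychonoff to a subnet with $\lambda_i([a])\to L([a])\in[0,\infty]$ for every $a\in\Cu(A)$, and set $\lambda([a]):=\sup\{L([a']):a'\ll a\}$; this $\lambda$ is a functional, and since the classes $[(a-\ep)_+]$ are cofinal among the $a'\ll a$, one checks $\lambda_i\to\lambda$ in the given topology. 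Compactness of $\QT_2(A)$ is then inherited through $\Delta$, and Hausdorffness of both spaces follows from the observation that two distinct functionals differ on some class $[a]$, which can be separated using the truncations $[(a-\ep)_+]$. Finally, affinity and order preservation of $\Delta$ and $\Delta^{-1}$ are read off directly from the formulas $d_\tau([a])=\sup_n\tau(f_{1/n}(a))$ and $\tau_\lambda(a)=\int_0^\infty\lambda([(a-t)_+])\,dt$.

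I expect the main obstacle to be the verification that the reconstructed $\tau_\lambda$ is genuinely a quasitrace, in particular additive on commuting pairs: a priori a functional on $\Cu(A)$ only respects the Cuntz order, so one must reduce to commutative $C^*$-subalgebras, where $\Cu$ and its functionals are explicitly described as lower semicontinuous functions on the spectrum and Radon measures. The remaining steps are routine but require careful bookkeeping with the two truncation-based topologies; the point is precisely that the same $(a-\ep)_+$-truncations appear in both definitions, which is what makes $\Delta$ bicontinuous.
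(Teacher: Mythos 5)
The paper does not prove this statement at all: Theorem \ref{jiu27-4} is quoted verbatim from Elliott--Robert--Santiago (\cite[Theorem 4.4]{ERS}) and used as a black box, so there is no in-paper argument to compare against. Your sketch is, in essence, a reconstruction of the ERS proof itself: the inversion formula $\tau(a)=\int_0^\infty d_\tau([(a-t)_+])\,dt$, the reduction of additivity of $\tau_\lambda$ on commuting pairs to the commutative subalgebra $C^*(a,b)$ where functionals are given by integration against a measure on the spectrum, and compactness of ${\rm F}(\Cu(A))$ via Tychonoff followed by the regularization $\lambda([a]):=\sup\{L([a']):a'\ll a\}$. You correctly identify the genuine crux (that a functional, which a priori only sees the Cuntz order, reassembles into something additive on commuting elements), and the strategy is sound. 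Two places deserve more care than the sketch suggests. First, your description of $\Cu$ of a commutative algebra as ``lower semicontinuous functions on the spectrum'' is only safe for classes of single positive elements $c\in C_0(X)_+$, where $[c]$ is determined by the open set $\{c>0\}$; for matrix amplifications in high dimension this fails, but fortunately $(a+b-t)_+$, $(a-t)_+$, $(b-t)_+$ all live in $C_0(X)_+$ itself, so the open-set/measure picture suffices. Second, the two truncation topologies are not literally the same pattern, since one uses $\tau((a-\ep)_+)$ and the other $\lambda([(a-\ep)_+])=d_\tau((a-\ep)_+)$; bicontinuity of $\Delta$ needs the standard sandwich $\tfrac{1}{\|a\|}\,\tau((a-\ep)_+)\le d_\tau((a-\ep)_+)\le \tfrac{2}{\ep}\,\tau((a-\ep/2)_+)$ to interleave the two families of truncations (compare Proposition \ref{jiu29-2} in the Appendix, which carries out exactly this kind of interleaving). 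With those points filled in, your argument is a correct proof of the cited result.
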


We summarize Proposition \ref{shier22-3} 
and Theorem \ref{jiu27-4} in the following form: 

\begin{prop}\label{jiu25-T1}
Let $A$ be a simple  \CA. 
Then the following diagram is commute, and all the maps are 
ordered affine homeomorphisms. 
$$
\xymatrix{
        \TQT(A) \ar[dr]_{\Delta} \ar[rr]_{\chi} & & \QT_2(A)\setminus\{\tau_\infty\} \ar[dl]^{\Delta}\\
         &  {\rm F}(\Cu(A))\setminus\{d_{\tau_{\infty}}\}& 
    }.
$$
\end{prop}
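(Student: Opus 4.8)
The plan is to assemble Proposition \ref{jiu25-T1} entirely from results already established in the excerpt, so that the only genuine content is checking that the triangle commutes and that $\Delta$ restricts correctly to $\TQT(A)$. First I would recall the three objects and maps: $\chi:\TQT(A)\to\QT_2(A)\setminus\{\tau_\infty\}$ from Proposition \ref{shier22-3}, which is already an ordered affine homeomorphism; $\Delta:\QT_2(A)\to{\rm F}(\Cu(A))$ from Theorem \ref{jiu27-4}, which is already an ordered affine homeomorphism; and $\Delta$ applied directly to $\TQT(A)$, defined by $d_\tau([a])=\lim_n\tau(f_{1/n}(a))$. The strategy is to show $\Delta|_{\TQT(A)}=\Delta\circ\chi$ as maps into ${\rm F}(\Cu(A))$; then commutativity is immediate, and since $\chi$ and $\Delta|_{\QT_2(A)}$ are ordered affine homeomorphisms onto their respective (co)domains, so is the composite $\Delta\circ\chi$, hence so is $\Delta|_{\TQT(A)}$, onto ${\rm F}(\Cu(A))\setminus\{d_{\tau_\infty}\}$.

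The key computation is therefore: for $\tau\in\TQT(A)$ and $a\in(A\otimes\K)_+$, show $\lim_n\tau(f_{1/n}(a))=\lim_n\wtilde\tau(f_{1/n}(a))$, where $\wtilde\tau=\chi(\tau)$. For a fixed $n$, $f_{1/n}(a)\in\Ped(A\otimes\K)_+$ (since $f_{1/n}$ vanishes near $0$, $f_{1/n}(a)$ lies in a hereditary subalgebra of the form $\Her((a-\ep)_+)$ with compactly supported functional calculus, which sits inside $\Ped(A\otimes\K)$ by \cite[5.6.2]{Pedbk}); and on $\Ped(A\otimes\K)_+$ the map $\wtilde\tau$ agrees with $\tau$ by the last clause of the proof of Proposition \ref{shier22-3} (``$\widetilde\tau$ extends $\tau|_{\Ped(A\otimes\K)_+}$''). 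Hence $\tau(f_{1/n}(a))=\wtilde\tau(f_{1/n}(a))$ for every $n$, and taking limits gives the equality of the two functionals $d_\tau$ on all of $\Cu(A)$. This also shows $\Delta(\TQT(A))=\Delta(\chi(\TQT(A)))=\Delta(\QT_2(A)\setminus\{\tau_\infty\})={\rm F}(\Cu(A))\setminus\{d_{\tau_\infty}\}$, the last step using that $\Delta:\QT_2(A)\to{\rm F}(\Cu(A))$ is a bijection carrying $\tau_\infty$ to $d_{\tau_\infty}$.

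Finally I would note that the remaining assertions — that $\Delta\circ\chi$ is continuous, affine and ordered, with continuous ordered affine inverse $\chi^{-1}\circ\Delta^{-1}$ — are automatic from composing the two homeomorphisms of Proposition \ref{shier22-3} and Theorem \ref{jiu27-4}, once we know $\Delta|_{\TQT(A)}$ coincides with that composite; there is nothing to recheck by hand. I do not anticipate a serious obstacle here: the proposition is essentially a bookkeeping statement, and the one point that needs a line of justification is the membership $f_{1/n}(a)\in\Ped(A\otimes\K)_+$ together with the agreement of $\tau$ and $\wtilde\tau$ there, both of which are recorded in the preceding propositions. The mild subtlety worth stating explicitly is why $d_{\tau_\infty}$ is the unique functional excluded from the image — i.e. that $\Delta$ is injective on $\QT_2(A)$ so that no other $\tau$ maps to $d_{\tau_\infty}$ — which is exactly Theorem \ref{jiu27-4}.
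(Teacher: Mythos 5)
Your proof is correct and follows exactly the route the paper intends: the paper offers no separate argument for this proposition, merely declaring it a summary of Proposition \ref{shier22-3} and Theorem \ref{jiu27-4}, and the only substantive point to check is the commutativity $\Delta|_{\TQT(A)}=\Delta\circ\chi$, which you verify via $f_{1/n}(a)\in\Ped(A\otimes\K)_+$ and the fact that $\chi(\tau)$ extends $\tau|_{\Ped(A\otimes\K)_+}$. This is the intended argument, written out in more detail than the paper itself provides.
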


\section{Tracial Oscillation and Real Rank Zero}

We identify $A$ with $A\otimes e_{1,1}\subset A\otimes\K.$
Note that if $A=\Ped(A)$, then $A\otimes e_{1,1}\subset \Ped(A\otimes\K).$ 

\begin{df}
Let $A$ be a \CA. 
For $\tau\in \TQT(A)$ (defined on $\Ped(A\otimes\K)$), 
define 
$\|\tau|_A\|:=\sup\{\tau(a\otimes e_{1,1}) :a\in \Ped(A)_+^1\}.$ Define 
$\QT(A):= 
\left\{\tau\in \TQT(A): \|\tau|_A\|=1\right\}.$ 
Let $\ol{\QT(A)}^w$ 
be the closure of $\QT(A)$ in $\TQT(A).$ 
\end{df}

\begin{prop}{\rm (\cite[Proposition 2.9]{FLosc}, \cite[Lemma 4.5]{eglnp}, \cite[Theorem 4.4]{ERS})}
Let $A$ be an algebraically simple  \CA\ 
 with $\QT(A)\neq\emptyset.$ 
Then $\ol{\QT(A)}^w$ is compact and Hausdorff. 
Moreover, $0\notin \ol{\QT(A)}^w.$
\end{prop}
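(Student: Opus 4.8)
The statement asserts two things about $\overline{\QT(A)}^w$ for an algebraically simple \CA\ $A$ with $\QT(A)\neq\emptyset$: compactness plus Hausdorffness, and that $0$ is not in the closure. The natural approach is to transport the problem to $\QT_2(A)$ (or ${\rm F}(\Cu(A))$) via the homeomorphism $\chi$ of Proposition \ref{shier22-3} and to use the compactness already recorded in Theorem \ref{jiu27-4}. First I would note that since $A$ is algebraically simple we have $A=\Ped(A)$, so $A\otimes e_{1,1}\subset\Ped(A\otimes\K)$ and in particular the functionals in $\QT(A)$ are genuinely normalized by the condition $\sup\{\tau(a\otimes e_{1,1}):a\in\Ped(A)_+^1\}=1$. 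The key observation is that $\overline{\QT(A)}^w$, being a closed subset of $\TQT(A)$, is carried by $\chi$ onto a closed subset of $\QT_2(A)\setminus\{\tau_\infty\}$; so it suffices to show that $\chi(\overline{\QT(A)}^w)$ is a \emph{closed} subset of the \emph{compact} Hausdorff space $\QT_2(A)$, i.e. that $\chi(\overline{\QT(A)}^w)$ does not accumulate at $\tau_\infty$, and that it does not accumulate at $0$. Both of these reduce to the same kind of uniform estimate.

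The heart of the argument is therefore a uniform bound: I would show that there is a fixed $a_0\in\Ped(A)_+^1$ and constants $0<c\le C<\infty$ such that $c\le\tau(a_0\otimes e_{1,1})\le C$ for every $\tau\in\QT(A)$, and hence for every $\tau$ in the weak closure. The upper bound $C$ is immediate from the normalization $\|\tau|_A\|=1$ (any $a_0\in\Ped(A)_+^1$ gives $\tau(a_0\otimes e_{1,1})\le 1$). For the lower bound, fix any nonzero $b\in\Ped(A)_+^1$; by definition of the supremum there is, for each $\tau$, some $a$ with $\tau(a\otimes e_{1,1})$ close to $1$, but to get a \emph{single} $a_0$ working for all $\tau$ simultaneously I would use simplicity: by Proposition \ref{jiu27-2}-type subequivalence arguments (or the comparison inherent in a simple algebra) a single strictly positive $a_0\in\Ped(A)$ dominates a scalar multiple of every element of $\Ped(A)^1$, so $\tau(a_0\otimes e_{1,1})\ge\delta$ for a uniform $\delta>0$ depending only on $a_0$. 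This bounds $\chi(\overline{\QT(A)}^w)$ away from both $\tau_\infty$ (which would force $\widetilde\tau(a_0\otimes e_{1,1})=\infty$) and $0$ (which would force it to be $0$), since the pairing $\tau\mapsto\widetilde\tau(a_0\otimes e_{1,1})$ is continuous on $\QT_2(A)$ by the definition of the topology there (and $a_0\otimes e_{1,1}\in\Ped(A\otimes\K)_+$ so lower semicontinuity together with the approximate-unit formula of Proposition \ref{shier25-1} pins the value down).

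With the uniform bound in hand, the rest is bookkeeping. Since $\chi(\overline{\QT(A)}^w)$ is a subset of $\QT_2(A)\setminus\{\tau_\infty\}$ that is bounded away from $\tau_\infty$ in the above quantitative sense, its closure in the compact Hausdorff space $\QT_2(A)$ cannot contain $\tau_\infty$; being weakly closed in $\TQT(A)$ and $\chi$ being a homeomorphism onto $\QT_2(A)\setminus\{\tau_\infty\}$, it follows that $\chi(\overline{\QT(A)}^w)$ is already closed in $\QT_2(A)$, hence compact, hence $\overline{\QT(A)}^w$ is compact; and $\QT_2(A)$ is Hausdorff so $\overline{\QT(A)}^w$ is Hausdorff. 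Finally $0\notin\overline{\QT(A)}^w$ because the continuous functional $\tau\mapsto\widetilde\tau(a_0\otimes e_{1,1})$ is $\ge\delta>0$ on $\QT(A)$ and hence on its closure, while it vanishes at $0$.

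\textbf{Main obstacle.} The one genuinely substantive point is producing the \emph{uniform} lower bound $\tau(a_0\otimes e_{1,1})\ge\delta$ valid over all of $\QT(A)$ from the pointwise normalization $\|\tau|_A\|=1$; this is where algebraic simplicity must be used (to get a single $a_0\in\Ped(A)_+$ that comparison-dominates a uniform fraction of the whole unit ball of $\Ped(A)$), and where one must be careful that the supremum defining $\|\tau|_A\|$ is controlled uniformly rather than merely attained in the limit for each individual $\tau$. Everything else is a direct application of Proposition \ref{shier22-3}, Proposition \ref{shier25-1}, and Theorem \ref{jiu27-4}.
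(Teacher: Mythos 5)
The paper does not actually prove this proposition; it is imported wholesale from \cite[Proposition 2.9]{FLosc}, \cite[Lemma 4.5]{eglnp} and \cite[Theorem 4.4]{ERS}, so the only question is whether your argument is correct on its own. Your topological architecture is fine: transporting $\ol{\QT(A)}^w$ into the compact Hausdorff space $\QT_2(A)$ via $\chi$, and reducing everything to showing that the image stays uniformly away from $\tau_\infty$ (upper bound $\tau(a_0)\le 1$, which is indeed immediate from $\|\tau|_A\|=1$) and from $0$ (a uniform lower bound $\tau(a_0)\ge\delta$), does yield compactness, Hausdorffness, and $0\notin\ol{\QT(A)}^w$. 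You also correctly identify the uniform lower bound as the only substantive point.

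However, the justification you offer for that lower bound does not work, and since (given compactness) the lower bound is essentially \emph{equivalent} to the conclusion, this is a genuine gap rather than a cosmetic one. The claim that ``a single strictly positive $a_0\in\Ped(A)$ dominates a scalar multiple of every element of $\Ped(A)^1$'' is false in the order sense (already $e_{11}$ in $M_2$, or $t\mapsto t$ versus $t\mapsto\sqrt t$ in $C_0((0,1])$, give counterexamples), and in the quasitrace sense the estimate one gets from algebraic simplicity, $\tau(b)\le M_b\,\tau(a_0)$, has a constant $M_b$ depending on the representation of $b$ in the algebraic ideal generated by $a_0$, hence \emph{not} uniform over $b\in\Ped(A)_+^1$; picking for each $\tau$ some $b_\tau$ with $\tau(b_\tau)\ge 1/2$ therefore gives no uniform $\delta$. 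The correct route (and the one underlying the cited references) is Cuntz-theoretic: take $s$ a strictly positive element of $A$ (this uses $\sigma$-unitality, which is not in your hypotheses but holds in all of the paper's applications), note that $b\lesssim s$ for \emph{every} $b\in A_+^1$, so $\tau(b)\le d_\tau(b)\le d_\tau(s)$; then use $A=\Ped(A)$ to write $s$ in the algebraic ideal generated by $(s-\ep)_+$ for $\ep=\|s\|/2$, which gives $d_\tau(s)\le N\,d_\tau((s-\ep)_+)\le N\,\tau(f_{\ep/2}(s))\le(2N/\ep)\,\tau(s)$ with $N$ depending only on $s$. Hence $1=\|\tau|_A\|\le(2N/\ep)\,\tau(s)$, i.e.\ $\tau(s)\ge\ep/2N$ uniformly on $\QT(A)$, and one transfers this to an arbitrary $a_0$ via the (now harmless, because $s$ is fixed) estimate $\tau(s)\le M_{s,a_0}\tau(a_0)$. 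With this repair, and with the lower bound applied to $(a_0-\eta)_+$ rather than $a_0$ when testing against the $\limsup$ half of the $\QT_2(A)$-topology, your proof closes.
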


\begin{df}\label{D2norm}
Let $A$ be a \CA\ with 
$\QT(A)\neq\emptyset.$
For each 
$x\in (A\otimes\K)_+,$ 
define
$\|x\|_{{2}}=\sup\{\tau(x^*x)^{1/2}: \tau\in \ol{\QT(A)}^w\}\in  [0,\infty].$
Let $l^\infty(A)$ be the \CA\ of all norm bounded sequences of $A.$ Define 
$
J_A:=\{\{x_n\}\in l^\infty(A): \lim_{n\to\infty}\|x_n\|_{2}=0\}.
$
\end{df} 

Note that if $a\in A_+,$ then $\|a\|_2\le \|a\|.$ However, when $a\in (A\otimes\K)_+,$ it could happen  that $\|a\|_2> \|a\|.$

\begin{df}\label{DefOS1}
(\cite[Definition A.1]{eglnkk0}, \cite[Definition 4.1]{FLosc})
Let $A$ be an algebraically simple \CA\ with $\QT(A)\neq \emptyset.$ 
Let $a\in  (A\otimes {\cal K})_+,$
define the tracial oscillation of $a$ 
as following: 
\beq
\omega(a):=\lim_{n\to\infty}\sup\left\{d_\tau(a)-\tau(f_{1/n}(a)): \tau\in \ol{\QT(A)}^w\right\}.
\eneq
\end{df}

\begin{rem}
{\bf (1)} In \cite[Definition A.1]{eglnkk0}, $\omega(a)$ is defined by using traces, and the notation used there is $\omega_S(a).$
{\bf (2)} The notation of tracial oscillation used in  \cite[Definition 4.1]{FLosc} is $\omega(a)|_S.$ 
\end{rem}

\begin{df}(\cite[Definition 4.7, Definition 5.1]{FLosc}) 
Let $A$ be an algebraically simple  \CA\ with $\QT(A)\neq \emptyset.$ 
For $a\in {{\Ped(A\otimes\K)_+}},$ $a$ is said to have \emph{tracial approximate oscillation zero}, if for any $\ep>0,$ there is $c\in\Her_A(a)_+$ such that $\|a-c\|_{2}<\ep,$ $\|c\|\le \|a\|,$ and $\omega(c)<\ep.$ 

If $a$ has tracial approximate oscillation zero for all $a\in {{\Ped(A\otimes\K)_+}},$ then the \CA\ $A$ is said to have tracial approximate oscillation zero. 


Let $A$ be an  algebraically simple \CA\ with $\QT(A)\neq \emptyset$  and $A$ has tracial approximate oscillation zero. 
If $B$ is another \CA\ and  $A\otimes\K\cong B\otimes\K,$
then $B$ is also said to have tracial approximate oscillation zero. 
\end{df}
\begin{rem}
Above definition was called T-tracial approximate oscillation zero in \cite[Definition 5.1]{FLosc}.  There are also other variations of tracial approximate oscillation in \cite[Definition 4.7]{FLosc}. 

\end{rem}

The following is a collection of some useful properties of tracial oscillation: 

\begin{prop}\label{1204-2}
Let $A$ be an algebraically simple \CA\ with $\QT(A)\neq \emptyset.$

{\rm (i)} {\rm (\cite[Proposition 4.2]{FLosc})} Let $a,b\in \Ped(A\otimes\K)_+.$ If $a\sim b,$ then $\omega(a)=\omega(b).$

{\rm (ii)} {\rm (\cite[Proposition 4.4 (2)]{FLosc})} {{Let $a,b\in \Ped(A\otimes\K)_+.$ If $ab=0,$ then $\omega(a+b)\le \omega(a)+\omega(b).$}}

{\rm (iii)} {\rm (\cite{FLosc}, p577, line 10)} For $a\in\Ped(A\otimes\K)_+,$  $\omega(a)=0$ if and only if the map $\wh{[a]}: \TQT(A)\to \R_+, \tau\mapsto d_\tau(a)$ is continuous. 

{\rm (iv)} {\rm (\cite[Proposition 5.7]{FLosc})} If, in addition,  $A$ is $\sigma$-unital, then $a\in\Ped(A\otimes\K)_+$ has tracial approximate oscillation zero if and only if for any $\ep>0,$ there is $c\in\Her (A)_+$ such that $\|a-c\|<\ep$ and $\omega(c)<\ep.$
\end{prop}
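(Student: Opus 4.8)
These four statements are established in \cite{FLosc} (compare \cite[Definition A.1]{eglnkk0}); I would reconstruct them as follows. First I would record a reduction that drives (i)--(iii): for $a\in\Ped(A\otimes\K)_+$ and $\ep>0$, the element $f_\ep(a)$ generates the same hereditary $C^*$-subalgebra as $(a-\ep)_+$, and $f_\ep(a)\ge f_{1/n}((a-2\ep)_+)$ for every $n$, so for each $\tau\in\ol{\QT(A)}^w$
\[
d_\tau((a-2\ep)_+)\ \le\ \tau(f_\ep(a))\ \le\ d_\tau(f_\ep(a))\ =\ d_\tau((a-\ep)_+).
\]
Taking $\sup_\tau$ and letting $\ep\to 0$ gives $\omega(a)=\lim_{\ep\to 0}\sup_{\tau}(d_\tau(a)-d_\tau((a-\ep)_+))$, so $\omega$ is really a function of the Cuntz class $[a]$ together with its cut-downs. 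Now (i) follows: $a\sim b$ gives $d_\tau(a)=d_\tau(b)$ for all $\tau$, while for each $\ep$ there is $\delta$ with $(a-\ep)_+\lesssim(b-\delta)_+$, hence $d_\tau((a-\ep)_+)\le d_\tau((b-\delta)_+)$; inserting this into the displayed formula and letting $\ep\to 0$ yields $\omega(a)\ge\omega(b)$, and symmetry gives equality.

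For (ii), $ab=0$ gives $f_{1/n}(a+b)=f_{1/n}(a)+f_{1/n}(b)$ and $d_\tau(a+b)=d_\tau(a)+d_\tau(b)$, so $d_\tau(a+b)-\tau(f_{1/n}(a+b))$ splits as the sum of the corresponding differences for $a$ and $b$; taking $\sup_\tau$ subadditively and $n\to\infty$ gives $\omega(a+b)\le\omega(a)+\omega(b)$. For (iii), each $\tau\mapsto\tau(f_{1/n}(a))$ is continuous on the compact space $\ol{\QT(A)}^w$ and these increase pointwise to $\wh{[a]}\colon\tau\mapsto d_\tau(a)$; thus $\omega(a)=0$ says exactly that this monotone convergence is uniform, which by Dini's theorem is equivalent to continuity of the limit, and a homogeneity argument transfers continuity between $\ol{\QT(A)}^w$ and all of $\TQT(A)$.

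For (iv) the implication ``$\Leftarrow$'' is routine. Given $c\in\Her_A(a)_+$ with $\|a-c\|<\ep_1$ and $\omega(c)<\ep_1$, I would replace $c$ by $c':=\min(c,\|a\|)$; since $t\mapsto\min(t,\|a\|)$ is strictly positive on $(0,\infty)$ we have $c'\sim c$, so $\omega(c')=\omega(c)<\ep_1$ by (i), while $\|c'\|\le\|a\|$ and $\|a-c'\|<2\ep_1$. Then for self-adjoint $x\in\Her_A(a)$ one has $\tau(x^2)\le\|x\|^2 d_\tau(a)$, and $M_a:=\sup_{\tau\in\ol{\QT(A)}^w}d_\tau(a)<\infty$ because $A$ is $\sigma$-unital and algebraically simple (a strictly positive element of $A$ then lies in $\Ped(A)$, and $a$ is Cuntz-dominated by a finite multiple of it). Hence $\|a-c'\|_2\le M_a^{1/2}\|a-c'\|<2M_a^{1/2}\ep_1$, which is $<\ep$ once $\ep_1$ is chosen small; so $a$ has tracial approximate oscillation zero.

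The implication ``$\Rightarrow$'' of (iv) I expect to be the main obstacle. Given $\ep>0$ I would fix a small $\delta$, apply the definition with a much smaller $\ep_1$ to get $c\in\Her_A(a)_+$ with $\|a-c\|_2<\ep_1$ and $\omega(c)<\ep_1$, and take $c'':=(a-\delta)_+\in\Her_A(a)_+$, so that $\|a-c''\|\le\delta<\ep$ automatically and only $\omega((a-\delta)_+)<\ep$ remains. The tools are the operator inequalities $a\le c+|a-c|$ and $c\le a+|a-c|$, the Cuntz subadditivity $d_\tau((x+y-s)_+)\le d_\tau((x-s/2)_+)+d_\tau((y-s/2)_+)$ for $x,y\in(A\otimes\K)_+$, and the Chebyshev estimate $d_\tau((|a-c|-r)_+)\le\tau(f_{r/2}(|a-c|))\le 4r^{-2}\tau((a-c)^2)\le 4r^{-2}\ep_1^2$; these should bound $d_\tau((a-\delta)_+)-d_\tau((a-\delta-\eta)_+)$ by an oscillation datum of $c$ plus an $O(\ep_1^2/\delta^2)$ error, after which $\sup_\tau$, $\eta\to 0$, and $\omega(c)<\ep_1$ would finish it. The delicate points — where the honest argument must be more careful than this outline — are that $(\cdot-\delta)_+$ is not operator monotone and that on $A\otimes\K$ the $2$-norm may exceed the operator norm, so turning a $\|\cdot\|_2$-approximant with small oscillation into a genuine $\|\cdot\|$-approximant with small oscillation has to be arranged so that no spectral mass of $c$ bounded away from $0$ contaminates the estimate; this is presumably handled via a hereditary reduction (elements of $\Her_A(a)_+\cap\Ped(A\otimes\K)$ inherit tracial approximate oscillation zero) together with a judicious choice of $\delta$.
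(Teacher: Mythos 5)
The paper does not actually prove this proposition; all four parts are quoted from \cite{FLosc}, so there is no in-paper argument to compare with, and I am judging your reconstruction on its own. Parts (i), (ii), (iii) and the implication ``$\Leftarrow$'' of (iv) are correct as you argue them: the sandwich $d_\tau((a-2\ep)_+)\le\tau(f_\ep(a))\le d_\tau((a-\ep)_+)$ and the resulting identity $\omega(a)=\lim_{\ep\to 0}\sup_\tau\bigl(d_\tau(a)-d_\tau((a-\ep)_+)\bigr)$ are the right reduction; R{\o}rdam's $\ep$--$\delta$ characterization of $\lesssim$ gives (i); orthogonal additivity of $f_{1/n}$ and of $d_\tau$ gives (ii); Dini's theorem on the compact set $\ol{\QT(A)}^w$ together with $\TQT(A)=\R_+\cdot\ol{\QT(A)}^w$ gives (iii); and the normalization $c'=\min(c,\|a\|)$ plus $\sup_\tau d_\tau(a)<\infty$ gives (iv)``$\Leftarrow$''.

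The gap is in (iv)``$\Rightarrow$''. Your plan is to take $c''=(a-\delta)_+$ and bound $\omega((a-\delta)_+)$ by ``an oscillation datum of $c$ plus $O(\ep_1^2/\delta^2)$''. Writing $\mu^x_\tau((s,s']):=d_\tau((x-s)_+)-d_\tau((x-s')_+)$, your Weyl and Chebyshev estimates give precisely $\omega((a-\delta)_+)\le\sup_\tau\mu^c_\tau((\delta-r,\delta+r])+8\ep_1^2/r^2$. The first term is the oscillation of $c$ \emph{at scale $\delta$}, whereas $\omega(c)<\ep_1$ only controls $\sup_\tau\mu^c_\tau((0,s])$ for $s$ below a threshold $s_0(c)$ depending on $c$, hence on $\ep_1$. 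To absorb $(\delta-r,\delta+r]\subset(0,\delta+r]$ you need $\delta+r<s_0(c)$, while to make $8\ep_1^2/r^2<\ep/2$ you need $r\gtrsim\ep_1/\sqrt{\ep}$; since $s_0(c)$ may shrink far faster than $\ep_1$, the two requirements can be incompatible, and no ordering of the quantifiers (choose $\ep_1$, receive $c$, then choose $\delta,r$) breaks this circle. A pigeonhole over disjoint windows $(\delta-r,\delta+r]$ does not help either, because the supremum over $\tau$ can be large on every window even though each individual $\mu^c_\tau$ is finite. I suspect the candidate $(a-\delta)_+$ is simply not the right one: the operator-norm approximant with small oscillation has to be manufactured from the $\|\cdot\|_2$-approximant $c$ itself (compare the proof of Proposition \ref{0421-4}, where the small-oscillation element is $e=h(a_0)x_0h(a_0)$, a function of the approximant, and not a cut-down of $a$). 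So this direction needs a different construction, not just more careful bookkeeping of $\delta$.
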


The following is an equivalent description of tracial approximate oscillation zero (for unital simple \CAs). 
\begin{prop}\label{0421-4}
Let $A$ be a unital 
simple \CA\ with $\QT(A)\neq \emptyset.$ 
The following are equivalent: 

{\rm (i)} For all $a\in A_+,$ $a$ has tracial approximate oscillation zero. 

{\rm (ii)} For any $x\in A_{sa}^1$ and $\ep>0,$ there are $y\in A_{sa}$ with $\|y\|\le 1$ and $\dt>0,$ such that $\|x-y\|_2<\ep$ and 
$\sup\{\tau(g_\dt(y)):\tau\in \QT(A)\}<\ep,$
where $g_\dt(t):=1-f_\dt(|t|)$ for all $t\in\R.$  

\end{prop}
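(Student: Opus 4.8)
The plan is to prove the equivalence of (i) and (ii) by unwinding the definitions of tracial approximate oscillation zero and of the seminorm $\|\cdot\|_2$, and by exploiting the fact that in a unital simple $C^*$-algebra every self-adjoint element is a difference of two positive contractions whose Cuntz data can be controlled. Throughout I will use Proposition \ref{1204-2}(iv), which says that for $\sigma$-unital (hence unital) $A$, $a\in A_+$ has tracial approximate oscillation zero iff for every $\ep>0$ there is $c\in\Her(A)_+$ with $\|a-c\|<\ep$ and $\omega(c)<\ep$; combined with Proposition \ref{1204-2}(iii), $\omega(c)<\ep$ is controlled by how close the map $\tau\mapsto d_\tau(c)$ is to being continuous, i.e.\ by how close $d_\tau(c)$ is to $\tau(f_{1/n}(c))$ uniformly over $\ol{\QT(A)}^w$.

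For (i) $\Rightarrow$ (ii): given $x\in A_{sa}^1$, write $x=x_+-x_-$ with $x_+,x_-\in A_+^1$ and $x_+x_-=0$. Apply (i) to each of $x_+$ and $x_-$ to obtain positive contractions $c_\pm\in\Her_A(x_\pm)_+$ with $\|x_\pm-c_\pm\|_2$ small and $\omega(c_\pm)$ small. Since $\Her_A(x_+)\perp\Her_A(x_-)$, the element $y:=c_+-c_-$ is self-adjoint with $\|y\|\le 1$, and $\|x-y\|_2\le\|x_+-c_+\|_2+\|x_--c_-\|_2$ is small. The remaining point is to convert the oscillation bounds $\omega(c_\pm)<\ep$ into a bound on $\sup_\tau\tau(g_\dt(y))$ for a suitable $\dt>0$: because $c_+\perp c_-$ we have $g_\dt(y)=g_\dt(c_+)+g_\dt(c_-)-\mathbf{1}$-type expression; more precisely one checks $1-f_\dt(|y|)=(1-f_\dt(c_+))(1-f_\dt(c_-))$ using orthogonality, and then estimates $\tau(g_\dt(c_\pm))$ near $d_\tau(c_\pm)-\tau(f_{1/n}(c_\pm))$-style quantities. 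Choosing $\dt$ small enough that $g_\dt\le f_{1/n}$-complementary thresholds line up, $\sup_\tau\tau(g_\dt(y))$ is dominated by $\omega(c_+)+\omega(c_-)+(\text{small})$.

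For (ii) $\Rightarrow$ (i): given $a\in A_+$, it suffices by Proposition \ref{1204-2}(iv) to produce, for each $\ep>0$, a $c\in\Her(A)_+$ with $\|a-c\|$ small and $\omega(c)$ small. Apply (ii) to $x:=a$ (rescaled into $A_{sa}^1$), getting $y\in A_{sa}$, $\|y\|\le 1$, and $\dt>0$ with $\|x-y\|_2<\ep$ and $\sup_\tau\tau(g_\dt(y))<\ep$. Replace $y$ by $y_+=f_0(y)\vee 0$, i.e.\ take the positive part, and then set $c:=(y_+-\dt)_+$ or a functional-calculus cutoff of $y_+$ that lands in $\Her(A)_+$; the point of the $g_\dt$ bound is exactly that $d_\tau(c)-\tau(f_{1/n}(c))$ is controlled by $\tau(g_\dt(y))$ for appropriate $n$, so $\omega(c)\le\sup_\tau\tau(g_\dt(y))<\ep$ after the right choice of $n$ versus $\dt$. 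The norm estimate $\|a-c\|$ small is more delicate here since (ii) only gives a $\|\cdot\|_2$ estimate, not a norm estimate; this is handled by first using that $a$ itself has tracial approximate oscillation zero is what we want to prove, so instead one argues that the $\|\cdot\|_2$-closeness together with the oscillation control suffices — indeed the definition of tracial approximate oscillation zero only asks for $\|a-c\|_2<\ep$, not $\|a-c\|<\ep$, and Proposition \ref{1204-2}(iv) upgrades this automatically for $\sigma$-unital $A$. So I would in fact target the $\|\cdot\|_2$ version of the definition directly and invoke (iv) at the end.

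The main obstacle I anticipate is the bookkeeping in passing between the ``oscillation'' formulation ($d_\tau(c)-\tau(f_{1/n}(c))$ small) and the ``gap'' formulation ($\tau(g_\dt(y))$ small): these measure trace-mass near the bottom of the spectrum versus trace-mass in a symmetric neighborhood of $0$, and matching up the cutoff parameters $n$, $\dt$, and $\ep$ across the orthogonal decomposition $y=c_+-c_-$ requires care, particularly because $g_\dt(y)$ sees both the positive and negative spectral gaps at once. A secondary subtlety is ensuring all elements produced remain in the correct hereditary subalgebras and contractions, and that the supremum over $\ol{\QT(A)}^w$ rather than just $\QT(A)$ is harmless — here one uses that $\ol{\QT(A)}^w$ is compact (the proposition quoted before Definition \ref{D2norm}) and that $\tau\mapsto\tau(g_\dt(y))$ and $\tau\mapsto\tau(f_{1/n}(c))$ are continuous on it, so suprema over $\QT(A)$ and its closure agree.
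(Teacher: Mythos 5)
Your overall skeleton for (i)$\Rightarrow$(ii) --- decompose $x=x_+-x_-$, apply tracial approximate oscillation zero to each part to get orthogonal contractions $c_\pm$, and recombine --- matches the paper's, but the step where you take $y:=c_+-c_-$ and claim $\sup_\tau\tau(g_\dt(y))\le\omega(c_+)+\omega(c_-)+(\text{small})$ is false, and this is the heart of the proposition. Your own identity $g_\dt(y)=(1-f_\dt(c_+))(1-f_\dt(c_-))=1-f_\dt(c_+)-f_\dt(c_-)$ shows that for $\tau\in\QT(A)$ (so $\tau(1)=1$ since $A$ is unital)
\[
\tau(g_\dt(y))\;=\;1-\tau(f_\dt(c_+))-\tau(f_\dt(c_-))\;\ge\;1-d_\tau(x_+)-d_\tau(x_-),
\]
which is essentially the trace of the kernel of $x$ and has nothing to do with oscillation; e.g.\ for $x=0$ your construction forces $y=0$ and $\tau(g_\dt(0))=\tau(1)=1$. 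The quantity controlled by $\omega(c_\pm)$ is $d_\tau(c_\pm)-\tau(f_\dt(c_\pm))$ (spectral mass in $(0,2\dt)$), not $1-\tau(f_\dt(c_\pm))$ (spectral mass in $[0,2\dt)$, which includes the kernel). The paper's fix is precisely to not use $a-b$ itself but $y=s(a-b)$ with $s(t)=\theta\, g_\theta(t)+h(t)$, where $h$ vanishes on $[-2\theta,2\theta]$: the summand $\theta\, g_\theta(a-b)$ lifts the part of the spectrum of $a-b$ in $[-\theta,\theta]$ (in particular the kernel) up to the value $\theta$, so that for small $\dt$ only the spectral mass of $a-b$ in $(\theta,3\theta)\cup(-3\theta,-\theta)$ can contribute to $g_\dt(y)$, and that is exactly what the oscillation bounds control. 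Some such modification of $y$ is unavoidable, so this is a genuine gap, not a bookkeeping issue.

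The converse direction in your sketch also leaves the delicate points unaddressed: (a) the definition requires the approximant $c$ to lie in $\Her_A(a)_+$ with $\|c\|\le\|a\|$, and a functional-calculus cutoff of $y$ does not lie in $\Her_A(a)$ --- the paper gets around this by sandwiching, taking $e:=h(a_0)x_0h(a_0)$ and then $a_1:=a^{1/2}ea^{1/2}\le a$, and using $a_1\sim e$ to transfer the oscillation bound; (b) applying (ii) to $a$ itself rather than to the shift $a_0=a-\theta$ loses control of the kernel of $a$, for the same reason as in the first paragraph; and (c) passing $\|\cdot\|_2$-estimates through functional calculus (your step from $\|a-y\|_2<\ep$ to $\|a-c\|_2$ small) needs an argument, since for 2-quasitraces $\|\cdot\|_2$ is not subadditive and only the Haagerup $2/3$-power inequality is available. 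These are repairable, but as written neither implication is proved.
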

\begin{proof}
${\rm (i)\Rightarrow (ii):}$ 
Let $x\in A^1_{sa}\nzero$ and let $\ep\in(0,1).$ 
Since $x_+,x_-$ has tracial approximate oscillation zero, 
there are $a\in \Her(x_+)_+$ and $b\in \Her(x_-)_+$ such that 
$\|a\|\le  1,$ $\|b\|\le 1,$ 
$\omega(a),\omega(b)<\ep/10,$ 
and $\|x_+-a\|_2,\|x_--b\|_2$ are sufficiently small, 
such that $\|x-(a-b)\|_2=\|(x_+-a)-(x_--b)\|_2<\ep/2.$
Since $\omega(a),\omega(b)< \ep/10,$ there is $\theta>0$ such that 
\beq\label{shi25-2}
\hspace{-0.2in} 
\sup\{d_\tau(a)-\tau(f_{3\theta}(a)):\tau\in \QT(A)\}, 
\sup\{d_\tau(b)-\tau(f_{3\theta}(b)):\tau\in \QT(A)\}<\ep/10 .
\eneq
Let $h$ be a real valued continuous function on $\R$ such that $h(t)=0$ for $t\in[-2\theta,2\theta]$ and $|h(t)-t|\le 2\theta$  for all $t\in\R.$ 
Then $h(t)\perp g_\theta(t).$ 
Let $s(t):={\theta} \cdot g_{_\theta}(t)+h(t).$ 
Let  $y:=s(a-b).$ 
Then $\|y-(a-b)\|<2\theta.$ With $\theta$ sufficiently small, we have 
\beq\label{shier23-2}
\|y-x\|_2=
\|(y-(a-b))-(x-(a-b))\|_2<\ep. 
\eneq
Let $\dt>0$ be sufficiently small, such that 
$f_\dt(|s(t)|)=1$ for $t\in (-\infty,-3\theta]\cup[-\theta,\theta]\cup[3\theta,\infty).$ 
Then $1\le  f_\dt(|s(t)|)+f_{\theta/2}(|t|)-f_{3\theta}(|t|)$ for all $t\in\R.$ 
Thus $g_\dt(y)=1- f_\dt(|s(a-b)|)\le f_{\theta/2}(|a-b|)-f_{3\theta}(|a-b|)
=f_{\theta/2}(a)-f_{3\theta}(a)+f_{\theta/2}(b)-f_{3\theta}(b).$ 
Then for all $\tau\in\QT(A),$ 
\beq
\tau(g_\dt(y))&\le & \tau(f_{\theta/2}(a)-f_{3\theta}(a)+f_{\theta/2}(b)-f_{3\theta}(b))
\\\label{shier23-3}
&=&
\tau(f_{\theta/2}(a))-\tau(f_{3\theta}(a))+\tau(f_{\theta/2}(b))-\tau(f_{3\theta}(b)))\overset{\eqref{shi25-2}}{\le} \ep.
\eneq
Hence \eqref{shier23-2} and \eqref{shier23-3} show that (ii) holds.

${\rm (ii)\Rightarrow (i):}$ 
Let $a\in A_+^1$ and 
let $\ep\in(0,1).$ Let {{$\theta:=\ep^4/100.$}}
Let {{$a_0:=a-\theta.$}} 
Define 
$h(t):=f_\theta(t+\theta)$ for $t\in\R.$  
Let $\eta\in  (0,\theta).$
By (ii),
there is $x\in A_{sa}^1$ and $\dt\in(0,\eta)$ such that  
\beq\label{shier23-f5}
\hspace{-0.2in}
\|a_0-x\|_2<\eta 
\quad \mbox{and} \quad 
\sup\{\tau(g_\dt(x)):\tau\in \QT(A)\}<\eta.
\eneq 
We may assume that $\eta$ is sufficiently small such that 
 $\|a_0-x\|_2<\eta$ imply the following:  
\beq
\hspace{-0.2in}
&&\|a-(x+\theta)_+\|_2=\|(a_0+\theta)_+-(x+\theta)_+\|_2<\theta, \label{shier23-f1}
\\&& \|h(a_0)b h(a_0)-h(x)b h(x)\|_2<\theta \rforal b \in A_+^1,\mbox{\ and}\label{shier23-f2}
\\&&\|a^{1/2}b a^{1/2}-(x+\theta)_+^{1/2}b(x+\theta)_+^{1/2}\|_2<\theta 
 \rforal b \in A_+^1.\label{shier23-f3}
 \eneq
Let $x_0:=f_{\dt/4}(x),$ $x_1:=f_{\dt/8}(x),$ 
$e:=h(a_0)x_0h(a_0).$ 
Then $h(x)x_0=x_0.$ For any $\tau\in\QT(A),$ 
\beq
\tau(e)=\tau(h(a_0)x_0h(a_0))
\overset{\eqref{shier23-f2}}{\ge} \tau(h(x)x_0h(x))-2\theta=\tau(x_0)-2\theta.
\eneq
Note that $d_\tau(e)=d_\tau(h(a_0)x_0h(a_0))\le d_\tau(x_0)\le \tau(x_1).$
Then 
\beq
d_\tau(e)-\tau(e)
\le \tau(x_1)-\tau(x_0)+2\theta\le \tau(g_\dt(x))+2\theta\le 3\theta
\le \ep. 
\eneq 
Hence $\omega(e)\le \ep.$ 
Let $a_1:=a^{1/2}ea^{1/2}\le a.$
Since $e\in\Her(a),$ we have $a_1\sim e.$ Then $\omega(a_1)=\omega(e)\le \ep$ 
(see Proposition \ref{1204-2} (i)). 
For all $\tau\in\QT(A),$
\beq
\tau(a-a_1)&=&\tau(a-a^{1/2}h(a_0)x_0h(a_0)a^{1/2})
 \le  
\tau(a-a^{1/2}x_0a^{1/2})+2\sqrt{\theta}
\qquad \\&\overset{\eqref{shier23-f1}}{\le} &
\tau((x+\theta)_+-(x+\theta)_+^{1/2}x_0(x+\theta)_+^{1/2})+2\sqrt{\theta}+3\theta 
\\&
{\le} &
(\theta+\dt) + 2\sqrt{\theta}+3\theta \le \ep^2.
\eneq
Then $\tau((a-a_1)^2)^{1/2}\le \tau(a-a_1)^{1/2}\le \ep.$
Hence $\|a-a_1\|_2\le \ep.$ Then (i) holds. 
\end{proof}

\begin{rem} 
Recently, a notion called Property (S) was introduced in \cite{EN}, which is similar to tracial approximate oscillation zero. Property (S) is the condition (ii) in Proposition \ref{0421-4} without assuming $\|y\|\le 1.$ 
\end{rem}

\begin{lem}\label{shier14-3}
Let $A$ be a \CA\ with $\QT(A)\neq\emptyset.$
Let $a\in \Ped(A\otimes\K)_+$ and $\ep>0.$ Then there are $b\in\Her(a)_+$ with $b\le a,$ $n\in\N,$ and injective *-homomorphism $\phi:\Her(b)\to M_n(A)$ such that $\|a-b\|_2<\ep,$ $\|\phi(x)\|_2=\|x\|_2,$ and $\omega(\phi(x))=\omega(x)$for all $x\in\Her(b)_+.$ 
\end{lem}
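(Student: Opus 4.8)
The plan is to reduce a general element $a\in\Ped(A\otimes\K)_+$ to one supported in a single matrix amplification $M_n(A)$ of $A$, up to a small $\|\cdot\|_2$-error, and then to exploit that the restriction of a densely defined $2$-quasitrace to a hereditary subalgebra behaves compatibly with $\|\cdot\|_2$ and with tracial oscillation. Since $a\in\Ped(A\otimes\K)_+$, the element $a$ lies (up to Cuntz equivalence) in $M_n(A)$ for large $n$ in the sense that $a$ is approximable within $\Ped(A\otimes\K)$ by elements of $M_n(A)\otimes e_{1,1}$-type corners; more precisely, writing $p_n=\sum_{i=1}^n e_{i,i}$ I would first observe that $p_n a p_n\nearrow a$ in norm-of-type convergence, so $\|a-p_nap_n\|$ (and hence $\|a-p_nap_n\|_2$, using that on $A_+$ one controls $\|\cdot\|_2$ by $\|\cdot\|$ after a cut-down into $A$, via Definition \ref{D2norm} applied to $a-(a-\ep)_+$ etc.) is small for large $n$. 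Thus I may assume $a\in M_n(A)_+$ from the outset, at the cost of replacing $a$ by $b_0:=p_nap_n$ with $b_0\le a$ and $\|a-b_0\|_2<\ep/2$.

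Next, with $b_0\in M_n(A)_+\subset (A\otimes\K)_+$, I would take $b:=b_0$ (or a further cut-down $(b_0-\dt)_+$ if one wants strict positivity issues to disappear, absorbing the extra error into $\ep$), set $\Her(b)\subset M_n(A)$, and let $\phi:\Her(b)\hookrightarrow M_n(A)$ be the inclusion. The point is that the inclusion $M_n(A)\hookrightarrow A\otimes\K$ is compatible with everything: every $\tau\in\TQT(A)$ restricts to $M_n(A)$, and the correspondence $\tau\mapsto\tau|_{M_n(A)}$ is a homeomorphism onto $\TQT(M_n(A))$ respecting the order and the normalization defining $\ol{\QT(\cdot)}^w$ (this is the standard stability of $\TQT$ under matrix amplification; it also underlies the identification $\QT_2(A)\setminus\{\tau_\infty\}\cong\TQT(A)$ in Proposition \ref{shier22-3}). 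Consequently $\|x\|_2$ computed in $A\otimes\K$ equals $\|x\|_2$ computed relative to $\QT(M_n(A))$ for $x\in M_n(A)$, giving $\|\phi(x)\|_2=\|x\|_2$; and $d_\tau$ and the cut-downs $f_{1/k}$ appearing in $\omega$ are intrinsic to $\Her(x)$, so $\omega(\phi(x))=\omega(x)$ for all $x\in\Her(b)_+$ by the definition of $\omega$ (Definition \ref{DefOS1}) together with Proposition \ref{1204-2}(i).

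The main obstacle, and the only part requiring care, is the first reduction: verifying that $\|a-p_n a p_n\|_2\to 0$. The subtlety is exactly the phenomenon flagged right after Definition \ref{D2norm} — for $x\in(A\otimes\K)_+$ one can have $\|x\|_2>\|x\|$, so $\|\cdot\|_2$-smallness is \emph{not} implied by norm-smallness in general. I would handle this by working with $a^{1/2}e_\lambda a^{1/2}$ for the approximate unit $\{e_\lambda\}$ of Proposition \ref{shier25-1}: for $a\in\Ped(A\otimes\K)_+$ one has $\tau(a^{1/2}e_\lambda a^{1/2})\to\tau(a)$ uniformly on the compact set $\ol{\QT(A)}^w$ (using lower semicontinuity from Proposition \ref{jiu25-3} together with compactness and a Dini-type argument, since the convergence is monotone in $\lambda$), and each $a^{1/2}e_\lambda a^{1/2}$ is, up to a further norm-small perturbation, supported in some $M_n(A)$. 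Choosing $\lambda$ and then $n$ appropriately yields $b\in M_n(A)_+$ with $b\le a$ (after a final cut-down to restore $b\le a$, or by taking $b=a^{1/2}e_\lambda a^{1/2}$ which already satisfies $b\le\|e_\lambda\|\,a\le a$) and $\|a-b\|_2<\ep$; then the previous paragraph supplies $\phi$. I expect the bookkeeping to match the style of \cite[Section 4, 5]{FLosc}, and no genuinely new idea beyond stability of $\TQT$ under amplification and the uniform approximation in $\|\cdot\|_2$ on the Pedersen ideal is needed.
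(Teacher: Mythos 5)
There is a genuine gap, and it sits exactly where you place the weight of the argument. Your plan is to produce $b$ that literally lies in $M_n(A)_+$ (via $p_nap_n$ or a norm-small perturbation of $a^{1/2}e_\lambda a^{1/2}$) and then take $\phi$ to be the inclusion $\Her(b)\hookrightarrow M_n(A)$. This cannot be made to work: any element that actually lies in $M_n(A)$ and is obtained from $a$ by a norm-small perturbation will in general fail to satisfy $b\le a$ and $b\in\Her(a)_+$ (note $p_nap_n\not\le a$ and $p_nap_n\notin\Her(a)$ in general), and --- as you yourself flag --- norm-smallness of the perturbation does not control $\|\cdot\|_2$ on $(A\otimes\K)_+$. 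Conversely, any $b\in\Her(a)_+$ with $b\le a$ that is $\|\cdot\|_2$-close to $a$ will in general have $\Her(b)\not\subset M_n(A)$ for every $n$, so there is no inclusion map to use. The missing idea is that $\phi$ need not be an inclusion: the paper takes $b:=(a-1/m)_+$, picks $c\in M_n(A)_+$ with $\|a-c\|<1/4m$, and invokes R{\o}rdam's lemma (\cite[Proposition 2.4 (iv)]{RorUHF2}) to write $b=r^*r$ with $rr^*\in\Her(c)\subset M_n(A)$; the polar decomposition $r=v|r|$ then gives the $*$-isomorphism $\phi:\Her(r^*r)\to\Her(rr^*)$, $x\mapsto vxv^*$, which moves $\Her(b)$ into $M_n(A)$ without $\Her(b)$ being contained in $M_n(A)$, and preserves every quasitrace because $\tau(vxv^*)=\tau(x^{1/2}v^*vx^{1/2})=\tau(x)$.

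A secondary problem is your choice of $a^{1/2}e_\lambda a^{1/2}$ for the $\|\cdot\|_2$-estimate. Quasitraces are additive only on commuting elements, so uniform smallness of $\tau(a)-\tau(a^{1/2}e_\lambda a^{1/2})$ does not directly bound $\sup_\tau\tau\bigl((a-a^{1/2}e_\lambda a^{1/2})^2\bigr)^{1/2}$, since $a$ and $a^{1/2}e_\lambda a^{1/2}$ do not commute. The paper sidesteps this by cutting down inside $C^*(a)$: with $b=(a-1/m)_+$ one has $a-b=g(a)$ for a continuous function $g$ with $0\le g\le 1/m$, so $\tau((a-b)^2)\le\tau(a-b)=\tau(a)-\tau(b)$, and Dini's theorem on the compact set $\ol{\QT(A)}^w$ (your idea, correctly) makes this uniformly small. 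So the correct skeleton is: functional-calculus cut-down for the $\|\cdot\|_2$-estimate, then the R{\o}rdam/polar-decomposition isomorphism of hereditary subalgebras for $\phi$; the inclusion-based route does not close.
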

\begin{proof} 
By Proposition \ref{jiu25-3}, we have $\lim_k\tau(a-(a-1/k)_+)=0.$ Since $\ol{\QT(A)}^w$ is compact and $\{\tau(a-(a-1/k)_+)\}_k$ is decreasing for all $\tau$, by Dini's theorem, there is $m\in\N$
such that $\sup\{\tau(a-(a-1/m)_+):\tau\in \ol{\QT(A)}^w\}<(\ep/4)^2.$ 
Then for all $\tau\in \ol{\QT(A)}^w,$ 
$\tau((a-(a-1/m)_+)^2)\le \tau(a-(a-1/m)_+)<(\ep/4)^2.$
Hence $\|a-(a-1/m)_+\|_2\le \ep/4.$ 
Let $b:=(a-1/m)_+\in \Her(a).$

There are $n\in \N$ 
and $c\in M_n(A)_+$ such that  $\|a-c\|<1/4m.$ 
By \cite[Proposition 2.4 (iv)]{RorUHF2}, there is $r\in A\otimes\K$ such that $(a-1/m)_+=r^* r$ and $rr^*\in \Her(c)\subset M_n(A).$ 
Let $r=v|r|$ be the polar decomposition of $r$ in $A^{**}.$ 
Then the map $\phi:\Her(b)=\Her(r^*r)\to \Her(rr^*)\subset M_n(A),$
$x\mapsto vxv^*$
is an injective *-homomorphism. Moreover, for any $x\in \Her(b)_+,$
$vx^{1/2}\in A.$ Then $\tau(\phi(x))=\tau(vxv^*)=\tau(x^{1/2}v^*vx^{1/2})=\tau(x)$ for all $\tau,$ which implies $\omega(\phi(x))=\omega(x)$ and  $\|\tau(\phi(x))\|_2=\|\tau(x)\|_2$ for all $x\in\Her(b)_+.$ 
\end{proof}



Recall that a \CA\ $A$ is said to have real rank zero, if the set of invertible self-adjoint elements in $\wtilde A_{sa}$ is dense in $\wtilde A_{sa}$ (\cite{BP91}). 

\begin{thm}\label{1204-6}
Let $A$ be {{a separable 
simple}} \CA\ with $\QT(A)\neq \emptyset.$ 
The following are equivalent: 

{\rm (i)} $A$ has tracial approximate oscillation zero, i.e., 
for all $a\in \Ped(A\otimes\K)_+,$ $a$ has tracial approximate oscillation zero. 

{\rm (ii)} For all $a\in {{\Ped(A)_+}},$ $a$ has tracial approximate oscillation zero. 

{\rm (iii)} 
{{$l^\infty(A)/J_A$}} has real rank zero. 
\end{thm}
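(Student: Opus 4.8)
The implications (i)$\Rightarrow$(ii) are trivial since $\Ped(A)_+\subset\Ped(A\otimes\K)_+$ (using the identification $A=A\otimes e_{1,1}$). For (ii)$\Rightarrow$(i), given $a\in\Ped(A\otimes\K)_+$ and $\ep>0$, I would apply Lemma \ref{shier14-3} to obtain $b\le a$ in $\Her(a)$ with $\|a-b\|_2<\ep$, an integer $n$, and an injective $*$-homomorphism $\phi\colon\Her(b)\to M_n(A)$ preserving both $\|\cdot\|_2$ and $\omega$. Since $M_n(A)$ is again algebraically simple with the same tracial data (via the isomorphism $M_n(A)\otimes\K\cong A\otimes\K$), tracial approximate oscillation zero is stable under passing between $A$, $M_n(A)$ and their hereditary subalgebras; so it suffices to know that every element of $\Ped(M_n(A))_+$ — equivalently, after a diagonal cut-down, every element of $\Ped(A)_+$ — has tracial approximate oscillation zero, which is exactly hypothesis (ii). One must check that the witnessing element $c\in\Her_A(\phi(b))$ with $\|\phi(b)-c\|_2<\ep$, $\|c\|\le\|\phi(b)\|$, $\omega(c)<\ep$ pulls back through $\phi^{-1}$ to a witness for $b$, and hence (absorbing the $\|a-b\|_2<\ep$ error) for $a$; this is routine because $\phi$ is an isometric-in-$\|\cdot\|$, $\|\cdot\|_2$-preserving, $\omega$-preserving order isomorphism onto a hereditary subalgebra.

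The substantive equivalence is (ii)$\Leftrightarrow$(iii), relating tracial approximate oscillation zero to real rank zero of the tracial sequence algebra $Q:=l^\infty(A)/J_A$. For (ii)$\Rightarrow$(iii): $Q$ has real rank zero iff every self-adjoint element can be approximated by one with finite spectrum, equivalently (by a standard reduction, using that $Q$ is a $C^*$-algebra — note $Q$ need not be unital, so one works in $\widetilde Q$) by self-adjoint elements with a spectral gap at $0$. Given a self-adjoint $[\{x_n\}]\in Q$, after truncating we may assume each $x_n\in A_{sa}$ with $\|x_n\|\le M$; splitting into positive and negative parts and rescaling, the problem reduces to the following: for $a\in A_+^1$ and $\ep>0$, produce $c_n\in A_{sa}$ with $\|c_n\|\le 1$, a spectral gap of $c_n$ at $0$ of uniform width, and $\|a-c_n\|_2\to 0$ along a subsequence (or with $\limsup<\ep$). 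This is precisely the content of the (i)$\Rightarrow$(ii) direction of Proposition \ref{0421-4} applied pointwise — except that Proposition \ref{0421-4} is stated for unital $A$, so the real work is to adapt its argument to the (algebraically simple, $\sigma$-unital) setting here: one replaces the unit by an approximate identity from $\Ped(A)_+$, uses Proposition \ref{1204-2}(iv) to get norm (rather than $\|\cdot\|_2$) approximations by elements of small oscillation, and then runs the same functional-calculus construction ($y:=s(a-b)$ with $s=\theta g_\theta+h$) to manufacture the spectral gap. Conversely, for (iii)$\Rightarrow$(ii): if $[\{x_n\}]\in Q_{sa}$ is close to an element with finite spectrum, then taking $x_n\equiv a$ constant for fixed $a\in\Ped(A)_+$, real rank zero of $Q$ hands us projection-valued approximants modulo $J_A$, i.e. elements $c_n$ with $\|a-c_n\|_2\to 0$ whose spectra avoid a neighborhood of $0$; bounding $\|c_n\|$ by functional calculus and reading off $\omega(c_n)$ small from the spectral gap (via Proposition \ref{1204-2}(iii), continuity of $\widehat{[c_n]}$), one recovers tracial approximate oscillation zero for $a$. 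Here the passage between "finite spectrum in $Q$" and "$\omega$ small in $A$" is mediated by the observation that $\tau(g_\dt(c))$ controls the defect $d_\tau(c)-\tau(f_\ep(c))$.

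The main obstacle is the careful bookkeeping in the non-unital adaptation of Proposition \ref{0421-4} and, relatedly, making precise the dictionary between quantitative statements in $A$ (involving $\ol{\QT(A)}^w$, $\|\cdot\|_2$, $\omega$) and $C^*$-statements in the quotient $Q=l^\infty(A)/J_A$. Two points deserve attention: first, $\QT(A)$ uses the normalization $\|\tau|_A\|=1$, so $\|\cdot\|_2$ is a genuine norm on $A_+$ but can blow up on $(A\otimes\K)_+$ — this is why Lemma \ref{shier14-3} (which brings an $(A\otimes\K)_+$ element back inside some $M_n(A)$) is needed before invoking (ii); second, one must verify that $J_A$ really is a (closed, two-sided) ideal and that approximation in $\|\cdot\|_2$ corresponds to approximation modulo $J_A$ — both are essentially in \cite{FLosc}, but should be cited or sketched. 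I expect the proof to cite \cite[Theorem 5.10, Proposition 5.7]{FLosc} for the bulk of (ii)$\Leftrightarrow$(iii) and to spend its new effort only on removing the strict-comparison hypothesis, which is not actually used in this particular equivalence — the hypothesis enters only in Theorem A, via hereditary surjectivity, not here.
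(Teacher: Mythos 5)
Your overall architecture matches the paper's for three of the four implications: (i)$\Rightarrow$(ii) is trivial, (ii)$\Rightarrow$(iii) is essentially a citation (the paper invokes \cite[Theorem 6.4]{FLosc} directly rather than re-running the argument of Proposition \ref{0421-4} in the non-unital setting, so the adaptation you worry about is already done there), and your (iii)$\Rightarrow$(ii) sketch --- constant sequences, projection-valued approximants modulo $J_A$, reading off small oscillation from the spectral gap --- is exactly the paper's route via \cite[2.6(iii)]{BP91} and the permanent-projection-lifting machinery of \cite{FLosc}.

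The genuine gap is in (ii)$\Rightarrow$(i). After applying Lemma \ref{shier14-3} to land in $M_n(A)$, you assert that every element of $\Ped(M_n(A))_+$ is ``equivalently, after a diagonal cut-down, an element of $\Ped(A)_+$'' and hence covered by hypothesis (ii). This is not a legitimate reduction: a general positive element of $M_n(A)$ (in particular $\phi(b)$) is not diagonal and is not Cuntz- or unitarily equivalent to anything in $A\otimes e_{1,1}$, so the hypothesis ``every element of $\Ped(A)_+$ has tracial approximate oscillation zero'' does not formally transfer to $\Ped(M_n(A))_+$; indeed, establishing precisely this kind of transfer from $A$ to $A\otimes\K$ is the whole content of (ii)$\Rightarrow$(i), so your appeal to ``stability under passing between $A$, $M_n(A)$ and their hereditary subalgebras'' is circular. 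The paper's resolution is the one idea your proposal is missing: first prove the equivalence (ii)$\Leftrightarrow$(iii), then transfer the property to $M_n(A)$ at the level of real rank zero, which \emph{is} matrix-stable --- from (ii) one gets that $l^\infty(A)/J_A$ has real rank zero, hence $l^\infty(M_n(A))/J_{M_n(A)}\cong M_n(l^\infty(A)/J_A)$ has real rank zero, and then the already-proved implication (iii)$\Rightarrow$(ii) applied to $M_n(A)$ yields tracial approximate oscillation zero for $\phi(b)$, which pulls back through $\phi^{-1}$ exactly as you describe. Without this detour through (iii) and the matrix-stability of real rank zero, your (ii)$\Rightarrow$(i) does not close.
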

\begin{proof}
${\rm (i)\Rightarrow (ii)}$ is trivial. 
Note that ${\rm (ii)\Rightarrow (iii)}$ was proved in \cite[Theorem 6.4]{FLosc}. 

${\rm (iii)\Rightarrow (ii):}$ 
Denote $l^\infty(A)/
J_A$ by $B.$
Let $\pi: l^\infty(A)\to B$ be the quotient map. 
We may embed $A$ into $l^\infty(A)$ canonically. 
Let $a\in A_+^1$ and let $\ep>0.$  
 Since $B$ has real rank zero, 
 by \cite[2.6 (iii)]{BP91}, 
there is a projection $p\in \Her_B(\pi(a))\subset 
\pi(l^\infty(\Her_A(a)))$ such that 
\beq\label{shi27-1}
\|\pi(a)-p\pi(a)\|<\ep/2.
\eneq
Let $p=\pi(\{e_n\})$ with $e_n\in \Her_A(a)$ for all $n.$ 
Then $\{f_{1/4}(e_n)\}$ is a permanent projection lifting of $p$ (see \cite[Definition 6.1, Proposition 6.2(1)]{FLosc}), and thus 
$\lim_n\omega(e_n)=0$
(see \cite[Definition 6.2(2)]{FLosc}).  
Together with \eqref{shi27-1}, 
there is $m\in\N$ such that 
\beq
\|a-f_{1/4}(e_m)a\|_2<\ep, 
\quad\mbox{ and }
\quad
\omega(e_m)<\ep. 
\eneq
Hence, by \cite[Proposition 5.6 (2)]{FLosc}, $a$ has tracial approximate oscillation zero. 

Note that we have shown (ii) equivalents to (iii). 

{{${\rm (ii)\Rightarrow (i)}$:}}
Let $a\in \Ped(A\otimes\K)_+.$ Let $\ep>0.$ 
By Lemma \ref{shier14-3}, there are $n\in\N,$ $b\in \Her(a)_+$ with $b\le a$ and an injective *-homomorphism $\phi: \Her(b)\to M_n(A)$ 
such that $\|a-b\|_2<\ep/8,$ 
$\|\phi(x)\|_2=\|x\|_2,$ and $\omega(\phi(x))=\omega(x)$for all $x\in\Her(b)_+.$ 
 Since (ii) holds and (ii) implies (iii) (see \cite[Theorem 6.4]{FLosc}), 
$l^\infty(A)/J_A$ has real rank zero. Then $l^\infty(M_n(A))/J_{M_n(A)}\cong M_n(l^\infty(A)/J_A)$ also has real rank zero. 
By what we just proved (i.e., ${\rm (iii)\Rightarrow (ii)}$), we have that $\phi(b)$ has tracial approximate oscillation zero. 
Then there is $c\in \Her(\phi(b))_+$ such that  $\omega(c)<\ep$  and $\|\phi(b)-c\|_{2}<\ep/8.$ 
Let $d:=\phi^{-1}(c)\in\Her(b),$ then 
$\omega(d)=\omega(c)<\ep$ and $\|b-d\|_2=\|\phi^{-1}(\phi(b)-c)\|_2
=\|\phi(\phi^{-1}(\phi(b)-c))\|_2=\|\phi(b)-c\|_{2} <\ep/8.$ 
Then by \cite[Lemma 3.5 (2)]{Haa}, we have 
$\|a-d\|_2^{2/3}\le \|a-b\|_2^{2/3}+ \|b-d\|_2^{2/3}\le 2(\ep/8)^{2/3}.$
Then $\|a-d\|_2<\ep.$
Then $a\in \Ped(A\otimes\K)_+$ has tracial approximate oscillation zero and (i) holds. 
\end{proof}

\section{Hereditary Surjective Canonical Map}


\begin{df}
(\cite[5.1]{ERS})
Let $A$ be a \CA. 
Let $\Lsc({\rm F}(\Cu(A)))$ be the set of functions 
$f:{\rm F}(\Cu(A))\to[0,\infty]$ that are additive, order-preserving, homogeneous (with respect to $\R_+$),
lower semicontinuous,  
and satisfies $f(0)=0.$ 

For  $f,g\in \Lsc({\rm F}(\Cu(A))),$
if $f\le (1-\ep)g$ for some $\ep>0$ and  
$f$ is continuous at each $\tau\in {\rm F}(\Cu(A))$
satisfying $g(\tau)<\infty,$
then we write $f\triangleleft g.$

Denote by ${\rm L}({\rm F}(\Cu(A)))$ the sub-semigroup of $\Lsc ({\rm F}(\Cu(A)))$
consisting of those $f\in \Lsc({\rm F}(\Cu(A)))$ that can be written as 
the pointwise supremum of a sequence $\{f_n\}_{n\in\N}$ in 
$\Lsc({\rm F}(\Cu(A)))$ such that $f_n\triangleleft f_{n+1}$ for all $n\in\N.$ 
\end{df}

\begin{df}\label{DGamma}
Let $A$ be a \CA\ with $\TQT(A)\neq\{0\}.$
Denote by  $\Aff\left(\TQT(A)\right)$ the set of continuous real valued 
functions $f$  on $\TQT(A)$ 
such that $f(s \tau)=s f(\tau),$ 
$f(\tau+\sigma)=f(\tau)+f(\sigma)$
for all $s\in \R_+$ and $\tau,\sigma\in \TQT(A).$ 
{{Note that if $f\in \Aff\left(\TQT(A)\right),$ then $f(0)=0.$}} 
Moreover, define 
\beq
\hspace{-0.2in}
&\Aff_+\left(\TQT(A)\right):=\left\{f\in \Aff\left(\TQT(A)\right):  f(\tau)>0\mbox{ if }\tau\in \TQT(A)\setminus \{0\}\right\}\cup \{0\},&
 \nonumber\\
&{\rm LAff}_+\left(\TQT(A)\right):=
\left\{f:\TQT(A)\to [0,\infty]: \exists\ \{f_n\}\subset  \Aff_+\left(\TQT(A)\right)\mbox{ with } f_n\nearrow f\right\},&
\nonumber
\eneq 
where $f_n\nearrow f$ means for all $n\in\N$ and all $\tau,$ $f_n(\tau)\le f_{n+1}(\tau),$  and $f(\tau)=\lim_i f_i(\tau).$ 
\end{df}


\begin{df}\label{shi28-1}
Let $A$ be a \CA. 
For $a\in {{(A\otimes\K)_+}},$ define a map 
$\wh{[a]}:\TQT(A)\cup \QT_2(A)\to [0, \infty],$ 
$\tau\mapsto d_\tau(a),$ also define a map 
$\wh{a}: {{\TQT(A)\cup \QT_2(A)\to [0, \infty]}},$ $\tau\to \lim_{n\to\infty}\tau((a-1/n)_+).$ 
The  canonical map $\Gamma$ is defined as following: 
\beq
\Gamma: \Cu(A)\to {\rm LAff}_+\left(\TQT(A)\right),\quad 
[a]\mapsto \wh{[a]}.
\eneq
\end{df}

\begin{prop}\label{shi21-4}
Let $A$ be a simple \CA\ with $\TQT(A)\neq \{0\}.$ Define a map $\Delta: \TQT(A)\to {\rm F}(\Cu(A))$ by 
$\tau\mapsto d_\tau.$ Then for any $f\in {\rm L}({\rm F}(\Cu(A))),$ $f\circ\Delta\in \LAff_+\left(\TQT(A)\right),$ and the map $\Delta^*: {\rm L}({\rm F}(\Cu(A)))\to \LAff_+\left(\TQT(A)\right),$ $f\mapsto f\circ\Delta$ is an affine bijection. 
\end{prop}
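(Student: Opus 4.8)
The plan is to deduce the proposition from the ordered affine homeomorphisms assembled in Proposition~\ref{jiu25-T1}, so that the real content reduces to a comparison of the two ``lower semicontinuous affine envelope'' constructions entering the definitions of ${\rm L}({\rm F}(\Cu(A)))$ and $\LAff_+(\TQT(A))$.

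First I would record that, by Proposition~\ref{jiu25-T1}, the map $\Delta\colon\TQT(A)\to{\rm F}(\Cu(A))$, $\tau\mapsto d_\tau$, is an ordered affine homeomorphism of $\TQT(A)$ onto ${\rm F}(\Cu(A))\setminus\{d_{\tau_\infty}\}$; the latter is open and dense in the compact Hausdorff space ${\rm F}(\Cu(A))$ of Theorem~\ref{jiu27-4} (open since the singleton $\{d_{\tau_\infty}\}$ is closed, dense since $n\,d_{\tau_0}\nearrow d_{\tau_\infty}$ in ${\rm F}(\Cu(A))$ for any fixed $\tau_0\in\TQT(A)\nzero$, an increasing sequence which converges to its pointwise supremum in ${\rm F}(\Cu(A))$). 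Consequently $\Delta^*f=f\circ\Delta$ is simply the restriction of $f$ to ${\rm F}(\Cu(A))\setminus\{d_{\tau_\infty}\}$ read through this homeomorphism. Since $f\mapsto f\circ\Delta$ is additive and $\R_+$-homogeneous as an operation on functions, and since both ${\rm L}({\rm F}(\Cu(A)))$ and $\LAff_+(\TQT(A))$ are visibly closed under addition and positive scaling, the affineness of $\Delta^*$ will follow immediately once bijectivity is established; so the work is to prove that $\Delta^*$ is a bijection.

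For injectivity I would show that $f(d_{\tau_\infty})$ is determined by $f|_{{\rm F}(\Cu(A))\setminus\{d_{\tau_\infty}\}}$ for every $f\in{\rm L}({\rm F}(\Cu(A)))$. The element $d_{\tau_\infty}$ is the largest element of ${\rm F}(\Cu(A))$ (every functional $\lambda$ satisfies $\lambda\le d_{\tau_\infty}$, because $d_{\tau_\infty}([a])=\infty$ for $a\neq 0$), so order-preservation gives $f(d_{\tau_\infty})\ge\sup_n n\,f(d_{\tau_0})$ for each $\tau_0\in\TQT(A)\nzero$, while lower semicontinuity of $f$ along $n\,d_{\tau_0}\nearrow d_{\tau_\infty}$ together with homogeneity gives the reverse inequality. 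Hence $f(d_{\tau_\infty})=0$ if $f$ vanishes on $\TQT(A)\nzero$ (in which case order-preservation forces $f\equiv 0$) and $f(d_{\tau_\infty})=+\infty$ otherwise: in either case a function of the restriction, which yields injectivity. (As a byproduct, a nonzero element of ${\rm L}({\rm F}(\Cu(A)))$ is strictly positive off $\{0\}$.)

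The main point is that $\Delta^*$ maps ${\rm L}({\rm F}(\Cu(A)))$ onto $\LAff_+(\TQT(A))$. For the inclusion ``$\subseteq$'', given $f=\sup_n f_n$ with $f_n\triangleleft f_{n+1}$ in $\Lsc({\rm F}(\Cu(A)))$, each $f_n$ is finite and continuous at every point where $f_{n+1}$, hence $f$, is finite; using compactness of ${\rm F}(\Cu(A))$ and that the $f_n$ are linear on this cone, one represents each $f_n$ as a pointwise supremum of continuous additive homogeneous minorants (a Hahn--Banach/convexity argument in the spirit of \cite{ERS}), and a diagonal extraction, together with addition of a fixed strictly positive element of $\Aff_+(\TQT(A))$ (e.g.\ $\tau\mapsto\tau(b)$ for a fixed $b\in\Ped(A\otimes\K)_+\nzero$, which lies in $\Aff_+(\TQT(A))$ and, by simplicity, is strictly positive off $\{0\}$), produces an increasing sequence in $\Aff_+(\TQT(A))$ with supremum $f|_{\TQT(A)}$. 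For ``$\supseteq$'', starting from $g=\sup_n g_n$ with $g_n\in\Aff_+(\TQT(A))$, I would first replace $\{g_n\}$ by a $\triangleleft$-increasing sequence via the standard device $g_n':=(1-1/n)\,g_{k_n}$ (compactness of $\ol{\QT(A)}^w$ and continuity of the $g_{k_n}$ supply the required strict domination and the continuity of $g_n'$ wherever $g_{n+1}'$ is finite), extend each $g_n'$ over $d_{\tau_\infty}$ by its forced value, check that these extensions lie in $\Lsc({\rm F}(\Cu(A)))$ — lower semicontinuity at $d_{\tau_\infty}$ being the point, and following from the fact that convergence to the top element $d_{\tau_\infty}$ forces every functional $[a]\mapsto\lambda([a])$ to diverge — and remain $\triangleleft$-increasing; their supremum is then an element of ${\rm L}({\rm F}(\Cu(A)))$ restricting to $g$.

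I expect the principal obstacle to be the inclusion ``$\subseteq$'' of the last step: turning the defining data of ${\rm L}({\rm F}(\Cu(A)))$ — a $\triangleleft$-increasing sequence of lower semicontinuous functions that may be infinite-valued and discontinuous off the finiteness locus of $f$ — into an increasing sequence of \emph{finite, continuous} affine functions as demanded by $\LAff_+(\TQT(A))$; this forces one to pass to suitable continuous affine minorants and to control strict positivity by a perturbation. The remaining ingredients (injectivity, the extension and re-sorting in ``$\supseteq$'', and affineness) are then routine bookkeeping.
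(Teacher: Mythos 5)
Your injectivity argument (via $n\,d_{\tau_0}\nearrow d_{\tau_\infty}$, order-preservation, homogeneity and lower semicontinuity at the top element), your surjectivity argument (extend each $g_n$ over $d_{\tau_\infty}$ by its forced value $\infty$ and damp by $(1-1/n)$ to manufacture the $\triangleleft$-relations), and the reduction of affineness to the affineness of $\Delta$ all match the paper's proof essentially step for step. The one place where your argument has a genuine gap is the forward inclusion $f\circ\Delta\in\LAff_+(\TQT(A))$ for $f\in{\rm L}({\rm F}(\Cu(A)))$, which you yourself single out as the principal obstacle but do not actually close. The paper disposes of it in one line by invoking \cite[Theorem 5.7]{ERS}: every $f\in{\rm L}({\rm F}(\Cu(A)))$ is the pointwise supremum of an \emph{increasing} sequence $\widehat{a_n}$ with $a_n\in\Ped(A\otimes\K)_+$ (the Pedersen-ideal refinement coming from inspecting the ERS proof), and each $\widehat{a_n}$ is then automatically continuous, affine and, by simplicity, strictly positive off $0$, hence lies in $\Aff_+(\TQT(A))$.

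Your proposed substitute --- represent each $f_n$ as a pointwise supremum of continuous additive homogeneous minorants by a Hahn--Banach argument, then perform a ``diagonal extraction'' and add a strictly positive element --- does not work as stated. A pointwise supremum of a \emph{family} of continuous affine minorants cannot in general be rewritten as the supremum of an \emph{increasing sequence} of affine minorants: maxima of affine functions are not affine, so one needs the family to be upward directed, and producing such a directed family is precisely the nontrivial content of the ERS theorem (their proof exploits the $\triangleleft$-structure and approximation from below by functions of the form $\widehat{[a]}$, not an abstract separation argument). Likewise, adding a fixed strictly positive element of $\Aff_+(\TQT(A))$ changes the supremum unless the perturbations are sent to $0$, and then the monotonicity of the resulting sequence has to be re-established. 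None of this is fatal to the strategy --- it is essentially how ERS prove their Theorem 5.7 --- but as written this step is a placeholder rather than a proof; citing \cite[Theorem 5.7]{ERS} directly, as the paper does, closes the gap.
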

\begin{proof}
By \cite[Theorem 5.7]{ERS}, there is a sequence $\{a_n\}\subset (A\otimes\K)_+$ such that for all  $\tau\in \TQT(A),$
$f\circ\Delta(\tau)=f(d_\tau)=\lim_{n} \tau(a_n)$ and  
 $\tau(a_n)\le \tau(a_{n+1})$ for all $n.$ 
Moreover, by the proof of \cite[Theorem 5.7]{ERS}, $a_n$ can be  chosen in $\Ped(A\otimes\K)_+$ for all $n.$ 
Then $\wh {a_n}\in \Aff_+\left(\TQT(A)\right)$ for all $n.$ 
Then $\{\wh{a_n}\}$ is an increasing sequence and 
$f\circ\Delta=\lim_n \wh{a_n}.$ Hence $f\circ\Delta\in \LAff_+\left(\TQT(A)\right).$ 

$\Delta^*$ is affine because $\Delta$ is  affine.
{{By Proposition \ref{jiu25-T1},}}
${\rm F}(\Cu(A))=\{d_\tau:\tau\in \TQT(A)\}\cup\{d_{\tau_\infty}\}.$
Let $f_1,f_2\in {\rm L}({\rm F}(\Cu(A)))$ such that $\Delta^*(f_1)=\Delta^*(f_2).$
Then $f_1(d_\tau)=f_2(d_\tau)$ for all $\tau\in \TQT(A).$ 
Let $\tau\in \TQT(A)\nzero,$ then  
$\tau_\infty=\lim_n n\tau.$ 
Since $f_1,f_2$ are lower semicontinuous, 
$f_1(d_{\tau_\infty})=\lim_nf_1(d_{n\tau})=\lim_n f_2(d_{n\tau})=f_2(d_{\tau_\infty}).$
Thus $f_1=f_2.$ Hence $\Delta^*$ is injective. 

Let $g\in \LAff_+(\TQT(A)).$ Then there is an increasing sequence $\{g_n\}\subset \Aff_+(\TQT(A))$ such that $g=\lim_n g_n.$ 
For each $n\in\N,$ define $\bar g_n: {\rm F}(\Cu(A))\to[0,\infty]$ by $\bar g_n(d_\tau)=g_n(\tau)$ for $\tau\in \TQT(A)$ and $\bar g_n(d_{\tau_\infty})=\infty.$ 
Then $\bar g_n$ is an increasing sequence of continuous affine functions on ${\rm F}(\Cu(A))$. 
Since the map $d_\tau\mapsto \tau$ is order preserving, $\bar g_n$ is also {{order preserving}}. Hence $\bar g_n\in \Lsc({\rm F}(\Cu(A))).$
Let $\bar g:=\lim_n\bar g_n=\lim_n (1-1/n)\bar g_n\in {\rm L}({\rm F}(\Cu(A))).$ Then $g=\bar g\circ\Delta.$ Hence $\Delta$ is surjective. 
\end{proof}

\begin{df}\label{HER-surj}
Let $A$ be a \CA\ with $\TQT(A)\neq\{0\}.$ 
If for any $a\in (A\otimes\K)_+$ and any $f\in \LAff_+\left(\TQT(A)\right)$
satisfying $f(\tau) <d_\tau(a)$ for all $\tau\in \TQT(A)\nzero,$ 
there is $b\in (A\otimes\K)_+$ such that 
$b\lesssim a$ and $d_\tau(b)=f(\tau)$ for all $\tau\in \TQT(A),$
then we say the 
canonical map $\Gamma: \Cu(A)\to \LAff_+\left(\TQT(A)\right)$
is {{hereditary surjective}}. 
\end{df}

Recall that a simple \CA\ $A$ with $\TQT(A)\neq\{0\}$ is said to have 
strict comparison for positive elements, 
if for any $a,b\in \Ped(A\otimes\K)_+,$
$d_\tau(a)<d_\tau(b)$ for all $\tau\in \TQT(A)\nzero$ implies $a\lesssim b.$ 

\begin{thm}
Let $A$ be a simple \CA\ with $\TQT(A)\neq\emptyset,$ has strict comparison for positive elements, and $\Gamma$ is surjective.  Then $\Gamma$ is hereditary surjective. 
\end{thm}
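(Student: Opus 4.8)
The plan is to verify the hypothesis of Definition \ref{HER-surj} directly, using strict comparison together with the surjectivity of $\Gamma$ and the structure of $\LAff_+(\TQT(A))$ as an increasing limit of $\Aff_+(\TQT(A))$. So fix $a\in(A\otimes\K)_+$ and $f\in\LAff_+(\TQT(A))$ with $f(\tau)<d_\tau(a)$ for all $\tau\in\TQT(A)\nzero$. The goal is to produce $b\in(A\otimes\K)_+$ with $b\lesssim a$ and $d_\tau(b)=f(\tau)$ for all $\tau$. The natural candidate for $[b]$ comes from surjectivity of $\Gamma$: choose $c\in(A\otimes\K)_+$ with $\widehat{[c]}=f$. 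The issue is that $[c]$ need not satisfy $[c]\le[a]$, so one cannot take $b=c$ directly; strict comparison is needed to move $[c]$ inside $[a]$ up to Cuntz equivalence, but strict comparison compares $d_\tau(a)$ with $d_\tau(b)$ strictly, whereas we only know $f(\tau)<d_\tau(a)$ strictly on $\TQT(A)\nzero$ and $f=\widehat{[c]}$ need not be a strict lower bound at all elements of $\Ped$ in a way usable at once.

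The way around this is to approximate $f$ from below by its defining sequence. Write $f=\sup_n f_n$ with $f_n\in\Aff_+(\TQT(A))$, $f_n\le f_{n+1}$. Each $f_n$ is continuous and $f_n(\tau)\le f(\tau)<d_\tau(a)$ for $\tau\in\TQT(A)\nzero$. By surjectivity of $\Gamma$ pick $c_n\in(A\otimes\K)_+$ with $\widehat{[c_n]}=f_n$; since $f_n$ is continuous (indeed in $\Aff_+$, hence bounded on the compact normalization) one expects $c_n$ can be taken in $\Ped(A\otimes\K)_+$ — this is where one invokes the fact (as in Proposition \ref{shi21-4} via \cite[Theorem 5.7]{ERS}) that continuous affine functions are realized by Pedersen-ideal elements. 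Now $d_\tau(c_n)=f_n(\tau)<d_\tau(a)$ for all $\tau\in\TQT(A)\nzero$, so strict comparison gives $c_n\lesssim a$. One then wants to arrange the $c_n$ increasing in the Cuntz order so as to pass to a supremum in $\Cu(A)$: using that $\Cu(A)$ is a complete ordered semigroup and that $f_n\triangleleft f_{n+1}$ after a harmless rescaling, one can (via strict comparison again, comparing $d_\tau(c_n)=f_n(\tau)$ with $d_\tau(c_{n+1})=f_{n+1}(\tau)$ which is strictly larger on $\TQT(A)\nzero$) replace $c_n$ by Cuntz-equivalent elements forming an increasing sequence dominated by $[a]$. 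Let $b$ represent $\sup_n[c_n]$ in $\Cu(A)$; then $b\lesssim a$ because $[a]$ is an upper bound, and $d_\tau(b)=\sup_n d_\tau(c_n)=\sup_n f_n(\tau)=f(\tau)$ for all $\tau$, using that functionals preserve suprema of increasing sequences.

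I would carry out the steps in this order: (1) reduce to the increasing approximating sequence $f=\sup f_n$ and realize each $f_n$ by $c_n\in\Ped(A\otimes\K)_+$ with $\widehat{[c_n]}=f_n$; (2) apply strict comparison to get $c_n\lesssim a$ and, after the $f_n\triangleleft f_{n+1}$ rescaling, $c_n\lesssim c_{n+1}$, so that $\{[c_n]\}$ is an increasing sequence in $\Cu(A)$ bounded above by $[a]$; (3) set $[b]:=\sup_n[c_n]$, note $[b]\le[a]$, and compute $d_\tau(b)=\sup_n f_n(\tau)=f(\tau)$ using continuity of functionals along increasing sequences; (4) conclude $\Gamma$ is hereditary surjective. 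The main obstacle I anticipate is step (2): making the sequence genuinely increasing in the Cuntz order while staying below $[a]$. Strict comparison only yields Cuntz subequivalence, not a canonical choice of representatives, so one must either work at the level of $\Cu(A)$-classes (using that $\Cu(A)$ is a $\Cu$-semigroup closed under suprema of increasing sequences and that $[c_n]\le[c_{n+1}]$ follows from $f_n(\tau)<f_{n+1}(\tau)$ on $\TQT(A)\nzero$ plus strict comparison) or realize the increasing sequence concretely inside $\Her(a)$. The rescaling built into $\triangleleft$ is precisely what converts the non-strict inequality $f_n\le f_{n+1}$ into the strict inequality needed to feed strict comparison, so handling that bookkeeping carefully is the crux; once $\{[c_n]\}$ is increasing below $[a]$, taking the supremum and computing its rank functions is routine.
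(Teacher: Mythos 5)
Your proposal is correct in outline, but it takes a long detour around an obstacle that is not actually there. The paper's proof is exactly your ``natural candidate'' with no further work: by surjectivity choose $c$ with $\Gamma([c])=\wh{[c]}=f$; then $d_\tau(c)=f(\tau)<d_\tau(a)$ for every $\tau\in\TQT(A)\nzero$, and this pointwise strict inequality on $\TQT(A)\nzero$ is \emph{precisely} the hypothesis of strict comparison, so $c\lesssim a$ and one takes $b=c$. Your stated reason for rejecting this (``strict comparison compares $d_\tau(a)$ with $d_\tau(b)$ strictly, whereas we only know $f(\tau)<d_\tau(a)$\dots'') is self-defeating: once $f=\wh{[c]}$, the inequality $f(\tau)<d_\tau(a)$ \emph{is} the inequality $d_\tau(c)<d_\tau(a)$. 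Consequently the entire apparatus of steps (1)--(3) — approximating $f$ by $f_n\in\Aff_+(\TQT(A))$, realizing each $f_n$ in $\Ped(A\otimes\K)_+$, rescaling to force strict inequalities $f_n<f_{n+1}$ on $\TQT(A)\nzero$, building an increasing sequence in $\Cu(A)$ below $[a]$, and passing to the supremum — is unnecessary, and it is where all the bookkeeping you correctly flag as delicate lives. That said, your argument does go through: the rescaling $(1-1/n)f_n$ makes the inequalities strict (since elements of $\Aff_+$ are strictly positive off $0$), strict comparison then gives $[c_n]\le[c_{n+1}]\le[a]$, $\Cu(A)$ admits suprema of increasing sequences, and functionals preserve them, so $d_\tau(b)=\sup_n f_n(\tau)=f(\tau)$. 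The one genuine gap shared by both your argument and the direct one is that the paper's definition of strict comparison is stated only for elements of $\Ped(A\otimes\K)_+$, whereas $a$ (and the paper's $b$) range over all of $(A\otimes\K)_+$; the paper glosses over this, and your version does not repair it either, since your $c_n$ lie in $\Ped$ but $a$ need not.
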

\begin{proof}
Let $a\in (A\otimes\K)_+$ and let $f\in \LAff_+\left(\TQT(A)\right)$
satisfying $f(\tau) <d_\tau(a)$ for all $\tau\in \TQT(A)\nzero.$
Since $\Gamma$ is surjective, there is $b\in (A\otimes\K)_+$ such that $\Gamma([b])=f.$ Then  
$d_\tau(b)=\Gamma([b])(\tau)=f(\tau)<d_\tau(a)$ for all $\tau\neq 0.$ 
Then $b\lesssim a$ due to strict comparison. 
\end{proof} 

Recall that a \CA\ $A$ is said to have stable rank one, if 
the set of invertible elements of $\wtd A$ is dense 
in $\wtd A$ 
(\cite{Rff}).    
The following result is a  consequence of Antoine-Perera-Robert-Thiel's results (\cite{APRT}). 
In the following theorem, 
for $x\in \Cu(A),$ 
we use $\wh x$ to denote the evaluation map 
$\wh x: {\rm F}(\Cu(A))\to \R_+, \lambda\mapsto \lambda(x).$
Note that  $\wh x\in {\rm L}({\rm F}(\Cu(A))).$ 

\begin{thm}\label{jiu21-a1}
{\rm (cf.~\cite[Theorem 7.14]{APRT})}
Let $A$ be a simple separable \CA\ of stable rank one.  
Let $a\in\Cu(A)$ and let $f\in {\rm L}({\rm F}(\Cu(A)))$
satisfying $f\le \wh a.$ 
Then there is  
$c\in\Cu(A)$ such that 
$c\le a$ and $\wh c=f.$
\end{thm}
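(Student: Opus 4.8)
The plan is to deduce Theorem~\ref{jiu21-a1} from the work of Antoine-Perera-Robert-Thiel, specifically from \cite[Theorem 7.14]{APRT} (or the closely related solvability/realization results in that paper), by carefully translating between the two formalisms for functions on the functionals of $\Cu(A)$. The statement we want is essentially a ``hereditary'' version of the fact that $\Gamma$ is surjective: given $a\in\Cu(A)$ and $f\in {\rm L}({\rm F}(\Cu(A)))$ with $f\le \wh a$, we must produce $c\in\Cu(A)$ with $c\le a$ and $\wh c = f$. The natural strategy is: first reduce to the case where $f\triangleleft \wh a$ (strict subordination) by using that $f$ is a supremum of an increasing sequence $f_n\triangleleft f_{n+1}$, handle each $f_n$, and then pass to the supremum inside $\Cu(A)$ using that $\Cu(A)$ is closed under suprema of increasing sequences and that the assignment $c\mapsto \wh c$ preserves such suprema.

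First I would recall the relevant APRT machinery. In \cite{APRT}, for a simple separable $C^*$-algebra of stable rank one, the Cuntz semigroup $\Cu(A)$ is shown to have strong regularity properties; in particular there is a realization result stating that for $a\in\Cu(A)$, every $f\in {\rm L}({\rm F}(\Cu(A)))$ with $f\triangleleft \wh a$ (or $f\le \wh a$ together with the appropriate $\sigma$-finiteness/continuity hypotheses automatically satisfied here) is of the form $\wh c$ for some $c\le a$ in $\Cu(A)$. The key point is that stable rank one forces $\Cu(A)$ to be ``$\omega$-continuous'' and that the relative version — realizing $f$ \emph{below a given $a$} — is exactly what their Theorem 7.14 (on the surjectivity of the restricted canonical map $\Cu(a)\to {\rm L}({\rm F}(\Cu(A)))_{\le\wh a}$, using hereditary subalgebras) provides. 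So the core step is to quote this and match hypotheses: the element $a\in\Cu(A)$ plays the role of a compact containment bound, $f\le\wh a$ is the hypothesis, and one invokes their result applied to $\Her$ of a representative of $a$, whose Cuntz semigroup sits inside $\Cu(A)$ as the ideal/subcone $\{c : c\le a\}$ (more precisely $\Cu(\Her(a'))$ for $a'\in(A\otimes\K)_+$ with $[a']=a$).

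The main technical obstacle I expect is the passage to the supremum and the bookkeeping with $\triangleleft$ versus $\le$. Given $f = \sup_n f_n$ with $f_n\triangleleft f_{n+1}\le f\le \wh a$, I would apply the APRT realization to each $f_n$ (here $f_n\triangleleft f_{n+1}\le\wh a$ gives the strict subordination needed to invoke their result, since $f_n\le (1-\ep_n)f_{n+1}\le(1-\ep_n)\wh a$ and $f_n$ is continuous where $\wh a$ is finite) to get $c_n\le a$ with $\wh{c_n}=f_n$. One then needs the $c_n$ to be arranged increasingly: this requires a slight strengthening, namely realizing $c_n$ with $c_n\le c_{n+1}$, which is typically available because the realization can be done inside $\Her$ of a fixed representative and the comparison $\wh{c_n}\le\wh{c_{n+1}}$ can be upgraded to $c_n\le c_{n+1}$ using again a $\triangleleft$-type comparison result valid under stable rank one (this is where the ``$c\le a$'' with $c$ in the \emph{hereditary} subalgebra is essential — it is the mechanism that lets APRT's comparison theorem apply). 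Setting $c:=\sup_n c_n\in\Cu(A)$, we get $c\le a$ and $\wh c=\sup_n\wh{c_n}=\sup_n f_n=f$, using that evaluation at a fixed functional preserves increasing suprema in $\Cu(A)$ (this is built into the definition of ${\rm F}(\Cu(A))$). The only genuine subtlety beyond citation is verifying that APRT's statement is flexible enough to give the increasing $c_n$; if it is not literally in that form, one extracts it by a standard diagonal/interpolation argument using the Cuntz-semigroup axioms (existence of suprema of increasing sequences, the $\ll$-relation, and Riesz-type interpolation), which for stable rank one algebras is exactly the content that APRT establish.
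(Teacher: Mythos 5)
Your proposal misses the key idea, and the steps you outline either beg the question or rest on a false reduction. The crux of the theorem is the \emph{relative} realization: producing the realizing element \emph{below} $a$. You propose to get this either by invoking APRT's Theorem 7.14 directly as a ``hereditary realization'' result (which would make the theorem a citation rather than something to prove; the ``cf.'' in the statement signals that the paper derives it from APRT's more basic results), or by passing to $\Her(a')$ for a representative $a'$ of $a$ and identifying $\Cu(\Her(a'))$ with $\{c\in\Cu(A):c\le a\}$. The latter identification is false: since $A$ is simple, $\Her(a')$ is a full hereditary subalgebra, so by Brown's theorem $\Cu(\Her(a'))\cong\Cu(A)$ --- applying the absolute realization theorem there gives no control that the realizing element lies below $a$. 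Likewise, your plan to upgrade $\wh{c_n}\le\wh{c_{n+1}}$ (or $\wh{b}\le\wh{a}$) to $c_n\le c_{n+1}$ (or $b\lesssim a$) is exactly strict comparison, which stable rank one does \emph{not} imply (Villadsen algebras of the first type are the standard counterexample, and avoiding the strict comparison hypothesis is the whole point of this paper); so the increasing-sequence bookkeeping cannot be repaired that way either.

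The mechanism the paper actually uses is the lattice structure of $\Cu(A)$ under stable rank one, which makes your $\triangleleft$/supremum decomposition unnecessary: by \cite[Theorem 7.13]{APRT} there is $b\in\Cu(A)$ with $\wh{b}=f$ (absolute realization); by \cite[Theorem 3.8]{APRT} the infimum $c:=b\wedge a$ exists in $\Cu(A)$, so $c\le a$ automatically; and by \cite[Theorem 6.12]{APRT} the rank map preserves infima, so $\wh{c}=\wh{b}\wedge\wh{a}=f\wedge\wh{a}=f$ because $f\le\wh{a}$. If you want to salvage your outline, this infimum step is what must replace both the hereditary-subalgebra reduction and the comparison upgrade.
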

\begin{proof}
By \cite[Theorem 7.13]{APRT}, 
there is $b\in\Cu(A)$ such that $\wh{b}=f.$ 
By \cite[Theorem 3.8]{APRT},  the infimum $b\wedge a$ of 
$\{b, a\}$ exists. 
Let $c=b\wedge a\in \Cu(A)$, then $c\le a.$
By \cite[Theorem 6.12]{APRT} and $f\le \wh a$, we have
$
\wh c=\wh{b\wedge a}=\wh{b}\wedge \wh a
=f\wedge \wh a=f. 
$
\end{proof}

\begin{thm}\label{shi21-3}
Let $A$ be a simple separable \CA\ of stable rank one. 
Then the canonical map $\Gamma: \Cu(A)\to \LAff_+\left(\TQT(A)\right)$
is hereditary surjective.
\end{thm}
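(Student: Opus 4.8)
The plan is to deduce Theorem \ref{shi21-3} directly from the abstract version in Theorem \ref{jiu21-a1} together with the identification of the relevant ordered structures established in Section 2 and Proposition \ref{shi21-4}. Concretely, given $a\in(A\otimes\K)_+$ and $f\in\LAff_+\left(\TQT(A)\right)$ with $f(\tau)<d_\tau(a)$ for all $\tau\in\TQT(A)\nzero$, I first want to transport $f$ and $\wh{[a]}$ across the affine bijection $\Delta^*:{\rm L}({\rm F}(\Cu(A)))\to\LAff_+\left(\TQT(A)\right)$ of Proposition \ref{shi21-4}. So let $g\in{\rm L}({\rm F}(\Cu(A)))$ be the preimage of $f$, i.e. $g\circ\Delta=f$, and note that $\wh{[a]}=\wh{x}\circ\Delta$ where $x=[a]\in\Cu(A)$ and $\wh{x}\in{\rm L}({\rm F}(\Cu(A)))$ is the evaluation map, using Proposition \ref{jiu25-T1} to identify ${\rm F}(\Cu(A))\setminus\{d_{\tau_\infty}\}$ with $\TQT(A)$ via $\Delta$.

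The key step is to upgrade the strict inequality $f(\tau)<d_\tau(a)$ on $\TQT(A)\nzero$ into the (non-strict) inequality $g\le\wh{x}$ on all of ${\rm F}(\Cu(A))$ that Theorem \ref{jiu21-a1} requires. On the copy $\{d_\tau:\tau\in\TQT(A)\nzero\}$ this is immediate: $g(d_\tau)=f(\tau)<d_\tau(a)=\wh{x}(d_\tau)$. At $d_{\tau_\infty}$ I use that $\wh{x}$ and $g$ are lower semicontinuous and that $\tau_\infty=\lim_n n\tau$ for any $\tau\in\TQT(A)\nzero$ (as in the proof of Proposition \ref{shi21-4}), together with homogeneity, to get $g(d_{\tau_\infty})=\lim_n g(d_{n\tau})=\lim_n n\,f(\tau)\le\lim_n n\,d_\tau(a)=\wh{x}(d_{\tau_\infty})$; one should also handle the zero functional separately, where both sides vanish. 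Thus $g\le\wh{x}$ on ${\rm F}(\Cu(A))$.

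Now Theorem \ref{jiu21-a1} applies: there is $c\in\Cu(A)$ with $c\le x$ and $\wh{c}=g$. Picking a representative $b\in(A\otimes\K)_+$ with $[b]=c$, the relation $c\le[a]$ in $\Cu(A)$ is exactly $b\lesssim a$, and for every $\tau\in\TQT(A)$ we have $d_\tau(b)=\wh{c}(d_\tau)=g(d_\tau)=f(\tau)$, which is precisely the conclusion of Definition \ref{HER-surj}. Hence $\Gamma$ is hereditary surjective.

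The only genuine obstacle is the boundary-point bookkeeping in the second paragraph: one must be careful that ${\rm L}({\rm F}(\Cu(A)))$ really consists of lower semicontinuous homogeneous functionals so that the passage to $d_{\tau_\infty}$ via the limit $n\tau\to\tau_\infty$ is legitimate, and that the preimage $g$ under $\Delta^*$ indeed lies in ${\rm L}({\rm F}(\Cu(A)))$ rather than merely in $\Lsc({\rm F}(\Cu(A)))$ — both of which are guaranteed by Proposition \ref{shi21-4} and the definitions, so no new work is needed. Everything else is a formal translation through the ordered affine homeomorphisms of Proposition \ref{jiu25-T1} and the bijection of Proposition \ref{shi21-4}.
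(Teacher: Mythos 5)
Your proposal is correct and follows essentially the same route as the paper: pull $f$ back through the bijection $\Delta^*$ of Proposition \ref{shi21-4} to get $g\in {\rm L}({\rm F}(\Cu(A)))$ with $g\le \wh{[a]}$, apply Theorem \ref{jiu21-a1}, and translate back. Your extra care at $d_{\tau_\infty}$ (and at $0$) is a point the paper's proof passes over silently, and your lower-semicontinuity/homogeneity argument there matches the one the paper itself uses in the injectivity part of Proposition \ref{shi21-4}; note it is also trivially settled since $\wh{[a]}(d_{\tau_\infty})=\infty$ for $a\neq 0$.
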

\begin{proof}
Let $a\in (A\otimes\K)_+$ and let $f\in \LAff_+\left(\TQT(A)\right)$
satisfying $f(\tau)< d_\tau(a)=\wh{[a]}(\tau)$ for all $\tau\in\TQT(A)\nzero.$ 
Let $\Delta, \Delta^*$ be 
as in Proposition \ref{shi21-4}. 
Then by Proposition \ref{shi21-4}, there is $g\in {\rm L}({\rm F}(\Cu(A)))$ such that $f=\Delta^*(g)=g\circ\Delta.$
For any $\tau\in\TQT(A)\nzero,$ $g(d_\tau)=g\circ\Delta(\tau)=f(\tau)<d_\tau(a).$ Hence $g\le \wh{[a]}.$ 
By Theorem \ref{jiu21-a1}, there is $b\in (A\otimes\K)_+$ such that $b\lesssim a$ and $\wh{[b]}=g\in  {\rm L}({\rm F}(\Cu(A))).$ 
Then 
for all $\tau\in \TQT(A),$ 
$d_\tau(b)=\wh{[b]}(d_\tau)=g(d_\tau)=g\circ\Delta(\tau)=f(\tau).$ 
Thus $\Gamma$ is hereditary surjective. 
\end{proof} 

\section{Hereditary Surjective Implies Tracial Approximate Oscillation Zero}

\begin{lem}\label{jiu15-1}
{\rm (\cite[Lemma 4.5]{FLosc})}
Let $a\in (A\otimes \K)_+$ and let $\lambda>0.$
Assume that $\omega(a)<\lambda.$ Then there exists $\dt_0>0$
such that for all $\dt\in (0,\dt_0),$ $\omega(f_\dt(a))<\lambda.$ 
\end{lem}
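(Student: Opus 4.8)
The plan is to reduce the estimate for $f_\dt(a)$ to the hypothesis on $a$ by a monotonicity argument together with a one-line comparison of continuous functions. First I would note that since $f_{1/(n+1)}\ge f_{1/n}$ pointwise on $[0,\infty)$ and every $\tau\in\ol{\QT(A)}^w$ is order preserving, the numbers $s_n:=\sup\{d_\tau(a)-\tau(f_{1/n}(a)):\tau\in\ol{\QT(A)}^w\}$ decrease in $n$, so that $\omega(a)=\inf_n s_n$. As $\omega(a)<\lambda$, I fix $N\in\N$ with $N\ge 2$ and $s_N<\lambda$, and set $\dt_0:=1/(2N)$.

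Now let $\dt\in(0,\dt_0)$, so that $2\dt<1/N$. The key elementary point is the pointwise inequality $f_{1/N}\circ f_\dt\ge f_{1/N}$ on $[0,\infty)$: if $0\le t\le 2\dt<1/N$ then $f_{1/N}(t)=0$, while if $t>2\dt$ then $f_\dt(t)=1$, so $f_{1/N}(f_\dt(t))=f_{1/N}(1)=1\ge f_{1/N}(t)$ (here $N\ge 2$ is used so that $1\ge 2/N$). Applying this through the functional calculus at $a$ and using again that each $\tau$ is order preserving gives $\tau\big(f_{1/N}(f_\dt(a))\big)\ge\tau\big(f_{1/N}(a)\big)$ for every $\tau\in\ol{\QT(A)}^w$. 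Since moreover $f_\dt(a)\lesssim a$ (because $f_\dt(a)\le\dt^{-1}(a-\dt)_+\lesssim a$), we have $d_\tau(f_\dt(a))\le d_\tau(a)$ for all such $\tau$. Combining,
\[
d_\tau\big(f_\dt(a)\big)-\tau\big(f_{1/N}(f_\dt(a))\big)\ \le\ d_\tau(a)-\tau\big(f_{1/N}(a)\big)\ \le\ s_N\qquad(\tau\in\ol{\QT(A)}^w).
\]

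Taking the supremum over $\tau$ shows $\sup\{d_\tau(f_\dt(a))-\tau(f_{1/N}(f_\dt(a))):\tau\in\ol{\QT(A)}^w\}\le s_N$. Since $\omega(f_\dt(a))$ is the decreasing limit over $n$ of the analogous suprema with $f_{1/n}$ in place of $f_{1/N}$, it is bounded above by the term at $n=N$, hence $\omega(f_\dt(a))\le s_N<\lambda$, as required. I do not anticipate a genuine obstacle here: the argument is essentially careful bookkeeping with the functional calculus. The only points needing attention are the correct choice of $\dt_0$ relative to $N$, the case split ($t\le 2\dt$ versus $t>2\dt$) in verifying $f_{1/N}\circ f_\dt\ge f_{1/N}$, and invoking the monotonicity of the defining sequence of $\omega$ so that a single index $N$ suffices.
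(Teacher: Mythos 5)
Your argument is correct: the monotonicity of the defining sequence reduces everything to a single index $N$ with $s_N<\lambda$, the pointwise inequality $f_{1/N}\circ f_\dt\ge f_{1/N}$ (valid because $N\ge 2$ gives $f_{1/N}(1)=1$ and $2\dt<1/N$) passes through the functional calculus and the order-preservation of 2-quasitraces, and $f_\dt(a)\lesssim a$ gives $d_\tau(f_\dt(a))\le d_\tau(a)$. The paper itself offers no proof of this lemma --- it is quoted from \cite[Lemma 4.5]{FLosc} --- so there is no in-paper argument to compare against; your proof is a complete and self-contained verification.
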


\begin{prop}\label{jiu23-2}
Let $A$ be a \CA\ and let $\tau$ be a quasitrace on $A.$
Then for any $x\in A$ and any $f\in C_0((0,+\infty))_+,$
$\tau(f(x^*x))=\tau(f(xx^*)).$
\end{prop}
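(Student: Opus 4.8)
\emph{Proof proposal.} The plan is to first establish the identity for monomials, then pass to polynomials vanishing at the origin by linearity of $\tau$ on commutative subalgebras, and finally to all of $C_0((0,+\infty))_+$ by a uniform approximation argument together with the continuity of $\tau$.

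\textbf{Step 1 (monomials).} Fix $x\in A$. I claim $\tau((x^*x)^n)=\tau((xx^*)^n)$ for every $n\in\N$. For $n=1$ this is axiom (1) in Definition \ref{shi01-10}. For $n\ge 2$, let $g_n(t):=t^{(n-1)/2}$, which is continuous on $[0,\|x\|^2]$ and vanishes at $0$, so $g_n(xx^*)\in C^*(xx^*)\subset A$. Put $z_n:=g_n(xx^*)\,x\in A$. A direct computation using that $g_n(xx^*)$ commutes with $xx^*$ gives
\[
z_n^*z_n=x^*\,(xx^*)^{n-1}\,x=(x^*x)^n,\qquad z_nz_n^*=g_n(xx^*)^2(xx^*)=(xx^*)^n,
\]
so $\tau((x^*x)^n)=\tau(z_n^*z_n)=\tau(z_nz_n^*)=\tau((xx^*)^n)$.

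\textbf{Step 2 (polynomials vanishing at $0$).} If $p$ is a polynomial with $p(0)=0$, then $p(x^*x)\in C^*(x^*x)$ and $p(xx^*)\in C^*(xx^*)$, both commutative $C^*$-subalgebras of $A$ on which $\tau$ is linear (Definition \ref{shi01-10}(2)). Writing $p(t)=\sum_{n\ge 1}c_n t^n$ and applying linearity together with Step 1 yields $\tau(p(x^*x))=\tau(p(xx^*))$.

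\textbf{Step 3 (general $f$).} Let $f\in C_0((0,+\infty))_+$; extend $f$ continuously to $[0,+\infty)$ with $f(0)=0$. The set $K:=\mathrm{sp}(x^*x)\cup\mathrm{sp}(xx^*)\subset[0,\|x\|^2]$ is compact, so by the Weierstrass theorem there are polynomials $q_k$ with $q_k\to f$ uniformly on $K$; replacing $q_k$ by $q_k-q_k(0)$ (and noting $q_k(0)\to f(0)=0$) we may assume $q_k(0)=0$. Then $q_k(x^*x)\to f(x^*x)$ and $q_k(xx^*)\to f(xx^*)$ in norm. Since $\tau$ is continuous on $A$ (recalled after Definition \ref{shi01-10}, via \cite[II.2.3]{BH}), passing to the limit in Step 2 gives $\tau(f(x^*x))=\tau(f(xx^*))$, as desired.

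The only points requiring care are mild: one must use $g_n$ with $n\ge 2$ (so that $g_n(xx^*)$ genuinely lies in $A$, avoiding any unitization issue when $A$ is non-unital), and one must arrange the approximating polynomials to vanish at $0$ so that Step 2 applies. Neither is a real obstacle; the content is entirely carried by the defining quasitrace identity $\tau(z^*z)=\tau(zz^*)$ of Step 1.
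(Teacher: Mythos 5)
Your proof is correct, but it takes a genuinely different route from the paper's. The paper argues in one line via the polar decomposition $x=u|x|$ in $A^{**}$: setting $y:=uf(x^*x)^{1/2}$, one has $y\in A$ by Cuntz's lemma, $y^*y=f(x^*x)$ and $yy^*=f(xx^*)$, so the quasitrace axiom applies directly to a single element $y$ for arbitrary $f$. Your Step 1 is in fact the "polynomial shadow" of this: for $f(t)=t^n$ your element $z_n=g_n(xx^*)x$ coincides with the paper's $y$, constructed without ever mentioning $u$ or the bidual. What you buy is elementarity — no enveloping von Neumann algebra, no appeal to \cite[1.3]{Cuntz77} — at the cost of invoking Stone--Weierstrass and the (nontrivial, Blackadar--Handelman) continuity of quasitraces in Step 3, which the paper's purely algebraic argument avoids entirely. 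That trade-off is worth being aware of: the proposition is applied in Lemma \ref{jiu16-prop1} to densely defined quasitraces on $\Ped(A\otimes\K)$, where global norm-continuity is not available for free; your argument still survives there because \cite[II.2.3]{BH} gives continuity of $\tau$ on the commutative $C^*$-subalgebras $C^*(x^*x)$ and $C^*(xx^*)$, which is all Step 3 actually uses, but you should say so explicitly. Two small points to tidy up: in Step 3 you need $0$ to lie in the compact set on which you approximate in order to conclude $q_k(0)\to f(0)=0$ (if $A$ is unital and $x$ is invertible, $0\notin\mathrm{sp}(x^*x)$); simply approximate uniformly on $[0,\|x\|^2]$ instead of on $K$. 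And since $f\ge 0$ you may as well take the $q_k$ with real coefficients, so that Step 2 only needs real-linearity on commutative subalgebras.
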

\begin{proof}
Let $x=u|x|$ be the polar decomposition of $x$  in  $A^{**}.$
Then $y:=uf(x^*x)^{1/2}\in A$ (see \cite[1.3]{Cuntz77}), 
and $yy^*=f(xx^*).$
Thus  $\tau(f(x^*x))=\tau(y^*y)=\tau(yy^*)=\tau(f(xx^*)).$
\end{proof}

{{The following lemma is one version of Lin's orthogonality  lemma,}} which 
was essentially contained in the proof of \cite[Lemma 3.1]{LinJFA} and \cite[Proposition 2.30]{LinAdv}. 
We include a proof here for reader's convenience. 


\begin{lem}\label{jiu16-prop1}
{\rm (Lin's orthogonality  lemma, cf.~\cite[Lemma 3.1]{LinJFA}, \cite[Proposition 2.30]{LinAdv})}
Let $A$ be an 
algebraically  simple
\CA. Let $a\in (A\otimes\K)_+,$
$e\in \Ped(A\otimes \K)_+,$ and $\ep>0.$
Assume that $a\lesssim e$
and $\omega(a)<\ep.$  
Then 
there are $b,c\in \Her(e)_+^1$ and 
 such that   

(1) $d_\tau(a)-\ep<
\tau(b)\le d_\tau(b) \le d_\tau(a)$ for all 
$\tau\in \overline{\QT(A)}^w;$  

(2) $\omega(b)<\ep;$

(3) $bc=0;$

(4) 
$d_\tau(e)-d_\tau(a)\le d_\tau(c)<d_\tau(e)-d_\tau(a)+\ep
\label{jiu16-4}
$ for all $\tau\in  \overline{\QT(A)}^w.$
\end{lem}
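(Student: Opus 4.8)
The plan is to prove Lin's orthogonality lemma by first reducing to a situation where the Cuntz subequivalence $a \lesssim e$ is witnessed concretely inside $\Her(e)$, and then carving out the two orthogonal pieces $b$ and $c$ using functional calculus on an element Cuntz equivalent to $a$ sitting inside $\Her(e)$. First I would use Proposition \ref{jiu27-2} (the Kirchberg–Rørdam rescaling lemma) together with $a \lesssim e$ to produce an element $a_0 \in \Her(e)_+$ with $a_0 \sim (a-\eta)_+$ for a small $\eta>0$; since $\omega$ is Cuntz-invariant (Proposition \ref{1204-2}(i)) and $\omega((a-\eta)_+) \le \omega(a) < \ep$ by Lemma \ref{jiu15-1} (choosing $\eta$ below the threshold $\dt_0$), we get an element inside $\Her(e)$ with small oscillation whose rank function $d_\tau(a_0)$ approximates $d_\tau(a)$ from below (using lower semicontinuity / $d_\tau(a) = \lim_{\eta\to 0} \tau((a-\eta)_+)$). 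A further spectral cut-off $b := f_{\dt}(a_0)$ for suitable $\dt>0$ gives $\omega(b) < \ep$ (Lemma \ref{jiu15-1} again), $b \in \Her(e)_+^1$, and, by the definition of $\omega$ and the choice of $\dt$, the estimate $d_\tau(a) - \ep < \tau(b) \le d_\tau(b) \le d_\tau(a)$ for all $\tau \in \overline{\QT(A)}^w$; this establishes (1) and (2).

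For the orthogonal complement piece, the key point is that $b$, being a cut-down $f_\dt(a_0)$ of an element $a_0 \in \Her(e)$, is Cuntz-below $a_0$ which is Cuntz-below $e$, and moreover $d_\tau(b)$ is uniformly close to $d_\tau(a)$. I would invoke a Cuntz-semigroup complementation result for stable rank one (or more precisely the Riesz-type / almost-orthogonality machinery available for $\Her(e)$ when $A$ has stable rank one — here the ambient hypothesis is that $A$ is algebraically simple, so I should instead use the weaker route that does not need stable rank one) to find $c \in \Her(e)_+^1$ with $bc = 0$ and $[b] \oplus [c] \approx [e]$ in the sense that $d_\tau(b) + d_\tau(c)$ brackets $d_\tau(e)$ up to $\ep$: concretely $d_\tau(e) - d_\tau(a) \le d_\tau(c) < d_\tau(e) - d_\tau(a) + \ep$. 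The cleanest way is: write $a_0 = r^*r$ with $rr^* \in \Her(e)$ (as in the proof of Lemma \ref{shier14-3} via \cite[Proposition 2.4(iv)]{RorUHF2}), pass to a hereditary subalgebra where $b$ sits as a genuine cut-down, and let $c$ be (a cut-down of) the "remainder" $e - (\text{a piece dominating } b)$ obtained from an approximate unit of $\Her(e)$ that is quasi-central relative to $b$; orthogonality $bc=0$ comes from choosing $c$ supported on the complementary spectral region. The rank estimate on $c$ then follows from additivity of $d_\tau$ on orthogonal elements and the fact that $d_\tau$ of the approximate unit tends to $d_\tau(e)$.

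The main obstacle I expect is arranging $bc = 0$ simultaneously with the two-sided rank estimate (4) for $c$: producing an orthogonal complement to $b$ inside $\Her(e)$ whose rank is controlled both above and below is exactly where one needs either a comparison-type argument or the structural input of \cite{LinJFA}/\cite{LinAdv}. In the original sources this is handled by a careful choice of functions $g_\dt$ vanishing near $0$ so that $b = g(a_0)$ and $c$ lives in the hereditary subalgebra generated by $f_{\dt'}(a_0) - (\text{larger cut})$ together with the part of $e$ not seen by $a_0$ at all; the lower bound $d_\tau(c) \ge d_\tau(e) - d_\tau(a)$ uses that $[b] + [c] \ge [e]$ (so $c$ "fills up" what $b$ misses inside $e$), while the upper bound uses $d_\tau(b) > d_\tau(a) - \ep$ from (1) so that $c$ cannot be much larger than $d_\tau(e) - d_\tau(a)$. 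I would assemble these estimates at the end from parts (1)–(3) plus $d_\tau(b) + d_\tau(c) \le d_\tau(e) \le d_\tau(b) + d_\tau(c) + (\text{error})$, where the error is absorbed into $\ep$ by shrinking $\eta, \dt$ at the outset. Throughout, I would keep all four quantities $b, c, \eta, \dt$ chosen in this dependency order so that no circularity arises, and cite \cite[Lemma 3.1]{LinJFA} and \cite[Proposition 2.30]{LinAdv} for the technical orthogonality extraction while spelling out the oscillation bookkeeping here.
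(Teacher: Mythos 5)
Your overall strategy matches the paper's: push a functional--calculus cut-down of $a$ into $\Her(e)$ via the R{\o}rdam lemma, take $b$ to be a further cut-down $f_\dt(\cdot)$ so that Lemma \ref{jiu15-1} and the definition of $\omega$ give (1) and (2), and obtain $c$ as the ``complement'' of a slightly larger cut-down inside an element Cuntz equivalent to $e$, reading off (4) from additivity of $d_\tau$ on orthogonal elements. That is exactly the paper's skeleton.

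The gap is in your construction of $c$, which is precisely the point of the lemma. You propose to take $c$ as ``the remainder $e-(\text{a piece dominating }b)$ obtained from an approximate unit of $\Her(e)$ that is quasi-central relative to $b$,'' and you ultimately defer the ``technical orthogonality extraction'' to \cite{LinJFA} and \cite{LinAdv} --- but the statement being proved \emph{is} that extraction, so the citation is circular, and quasi-centrality only gives $\|cb\|$ small, not $cb=0$, nor does it give $c\ge 0$. The paper's resolution is elementary but must be said: \cite[Proposition 2.4 (iv)]{RorUHF2} produces $x$ with $x^*x=f_\theta(a)$ and $xx^*\in\Her(f_\eta(e))$ for a \emph{strictly positive} $\eta$ (not merely $xx^*\in\Her(e)$, as you wrote). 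One then chooses $g\in C^*(e)_+^1$ with $g\sim e$ and $gf_\eta(e)=f_\eta(e)$, so that $g$ is an exact unit for $\Her(f_\eta(e))$, and sets $b_1:=f_{\dt/2}(xx^*)$, $c:=g-b_1$. Then $c\ge 0$ and $cb=gb-b_1b=b-b=0$ exactly, and the two bounds in (4) come from $c+f_{\dt/4}(xx^*)\ge g\sim e$ (lower bound, using $f_{\dt/4}(xx^*)\lesssim a$) and $b+c\le g$ with $b\perp c$ (upper bound, combined with (1)). Your sketch of the rank bookkeeping for (4) is also slightly off in that the lower bound needs $[c]+[b_0]\ge[e]$ for the \emph{larger} cut $b_0=f_{\dt/4}(xx^*)$, not $[c]+[b]\ge[e]$; this is harmless since $d_\tau(b_0)\le d_\tau(a)$, but it is another place where the three nested cuts $b_0\ge b_1\ge b$ have to be kept straight.
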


\begin{proof}
Since $\omega(a)<\ep,$ there exists $\theta_0>0$ such that 
for any $\theta_1\in (0,\theta_0),$
\beq\label{jiu16-11}
\sup_{\tau\in  \overline{\QT(A)}^w}
\{d_\tau(a)-\tau(f_{\theta_1}(a))\}<\ep.
\eneq
By Lemma \ref{jiu15-1}, there exists 
$\theta\in(0,\theta_0/4)$ such that 
$
\omega(f_{\theta}(a))<\ep. 
$
Since $a\lesssim e,$ by \cite[Proposition 2.4 (iv)]{RorUHF2}, there are 
$\eta>0$ 
and  $x\in A\otimes \K$ such that $x^*x=f_\theta(a),$ and $xx^* \in \Her(f_\eta(e)).$ 
Note that by \cite[Proposition 4.2]{FLosc}, 
$\omega(xx^*)=\omega(x^*x)=\omega(f_\theta(a))<\ep.$
By Lemma \ref{jiu15-1}, there is $\dt>0$ such that 
$\omega(f_\dt(xx^*))<\ep.
$
Let $b_0:=f_{\dt/4}(xx^*),$
$b_1:=f_{\dt/2}(xx^*),$
and $b:=f_\dt(xx^*).$  
Then $\omega(b)<\ep,$
thus (2) holds. 

Since $b\lesssim xx^*\sim x^*x=f_\theta(a)\lesssim a,$ 
$\tau(b)\le d_\tau(b)\le d_\tau(a)$ for all $\tau.$
We also have 
\beq
d_\tau(a)-\tau(b)&=&
d_\tau(a)-\tau(f_\dt(xx^*))
 \overset{\rm{(Prop\  \ref{jiu23-2})}}{=} 
d_\tau(a)-\tau(f_\dt(x^*x))
\\&= &
d_\tau(a)-\tau(f_\dt(f_\theta(a)))
 \le  
d_\tau(a)-\tau(f_{2\theta}(a))
 \overset{\eqref{jiu16-11}}{<} \ep. \label{jiu16-2}
\eneq
Then \eqref{jiu16-2} shows that (1) holds. 
By continuous function calculus,
there is $g\in C^*(e)_+^1$ such that $g\sim e$ and $g f_\eta(e)=f_\eta(e).$ Define 
$c:=g-b_1\in \Her(e)_+^1.$ 
Since $gb=b$ and $b_1b=b,$  we have 
$bc=0,$ hence (3) holds. 
Note that  $c+b_0=g+(b_0-b_1)\ge g\sim e.$  
For all $\tau\in \overline{\QT(A)}^w$, we have 
\beq\label{jiu16-3}
d_\tau(e)=d_\tau(g)
&\le & d_\tau(c)+d_\tau(b_0)\le 
d_\tau(c)+d_\tau(xx^*)
=
d_\tau(c)+d_\tau(x^*x)
\\
&\le& 
d_\tau(c)+d_\tau(a).
\eneq

For the last step, 
since $c\perp b$ and $b+c=g-(b_1-b)\le g\sim e,$ we have 
\beq\label{jiu16-q5}
d_\tau(c)+ d_\tau(b)
=
d_\tau(c+b)
\le 
d_\tau(g)=d_\tau(e).
\eneq 
Then we have 
\beq\label{jiu16-5}
d_\tau(c)
\overset{\eqref{jiu16-q5}}{\le} d_\tau(e)-d_\tau(b)
\overset{\eqref{jiu16-2}}{<}d_\tau(e)-d_\tau(a)+\ep. 
\eneq
Then \eqref{jiu16-3} and \eqref{jiu16-5} show that 
(4) holds. 
\end{proof}

The following theorem characterizes when does $\Gamma$ being  hereditary surjective. In particular, (2) of the following theorem shows that to verify the hereditary surjectivity of $\Gamma,$
it is suffice to look at continuous affine functions. 
The proof of the following theorem used Lin's technique  (\cite[Lemma 3.3]{LinJFA}). 

\begin{thm}\label{shier09-1}
Let $A$ be an algebraically simple \CA\ with $\QT(A)\neq\emptyset.$ 
Then the following are equivalent: 

{\rm (1)} $\Gamma$ is {{hereditary surjective}}.

{\rm (2)} For any $e\in (A\otimes\K)_+$ and any $h\in \Aff_+\left(\TQT(A)\right)$
satisfying $h(\tau) <d_\tau(e)$ for all $\tau\in \TQT(A)\nzero,$ there is $y\in (A\otimes\K)_+$ such that 
$y\lesssim e$ and $d_\tau(y)=h(\tau)$ for all $\tau\in \TQT(A).$ 

{\rm (3)} For any $e\in (A\otimes\K)_+,$ any $\ep>0,$ and any $h\in \LAff_+\left(\TQT(A)\right)$
satisfying $h(\tau) <d_\tau(e)$ for all $\tau\in \TQT(A)\nzero,$ 
there is $y \in (A\otimes\K)_+$ such that  
$y\lesssim e$ and $d_\tau(y)=h(\tau)$ for all $\tau\in \TQT(A),$
and  $y=\sum_{n=1}^\infty y_n,$ where 
$\{y_n\}_{n\in\N}\subset (A\otimes\K)_+$ is a sequence of mutually orthogonal positive elements  satisfying   $\omega(y_n)=0$ and $\|y_n\|<1/2^n$ for all $n\in\N.$ 
{{Moreover, for any $\ep>0,$ there is a sequence of mutually orthogonal positive elements $\{b_n\}\subset \Her(a)_+$ such that $ \omega(b_n)<\ep/2^n$ for all $n,$ and $|h(\tau)-\sum_{n=1}^\infty d_\tau(b_n)|<\ep$ for all $\tau\in \ol{\QT(A)}^w.$}}
\end{thm}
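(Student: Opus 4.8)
The plan is to prove the cycle of implications $(3)\Rightarrow(2)\Rightarrow(1)\Rightarrow(3)$, since $(2)$ is a literal restriction of $(3)$ to functions $h\in\Aff_+$ that already equal $d_\tau(y)$ exactly, and $(1)$ is exactly the statement of $(2)$ (resp.\ $(3)$) with $h$ allowed to range over all of $\LAff_+(\TQT(A))$. Thus $(3)\Rightarrow(2)$ is essentially trivial (take the given $h\in\Aff_+\subset\LAff_+$ and discard the extra orthogonality data), and $(2)\Rightarrow(1)$ will proceed by a standard approximation argument: given $f\in\LAff_+(\TQT(A))$ with $f(\tau)<d_\tau(a)$ for $\tau\neq 0$, write $f=\lim_n f_n$ with $f_n\in\Aff_+(\TQT(A))$ increasing; after passing to $(1-1/n)f_n$ we may assume $f_n\triangleleft f_{n+1}$ and each $f_n(\tau)<d_\tau(a)$, apply $(2)$ to each $f_n$ and $e=a$ to get $y_n\lesssim a$ with $d_\tau(y_n)=f_n(\tau)$, and then assemble a supremum $y=\bigvee y_n$ inside $\Her(a)$ (using that a countable increasing sequence in $\Cu$ has a supremum represented by a single positive element with $d_\tau(y)=\sup_n d_\tau(y_n)=f(\tau)$, and $y\lesssim a$ since each $y_n\lesssim a$ and $a$ absorbs the supremum by the comparison implicit in $y_n\triangleleft y_{n+1}$). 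The genuinely substantive content is $(1)\Rightarrow(3)$, which is where Lin's orthogonality lemma (Lemma \ref{jiu16-prop1}) enters.

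For $(1)\Rightarrow(3)$: fix $e\in(A\otimes\K)_+$, $\ep>0$, and $h\in\LAff_+(\TQT(A))$ with $h(\tau)<d_\tau(e)$ for all $\tau\in\TQT(A)\nzero$. First I would use hereditary surjectivity $(1)$ to produce a single $y\lesssim e$ with $d_\tau(y)=h(\tau)$ for all $\tau$. The work is to rewrite $y$ as a sum $\sum y_n$ of mutually orthogonal positive contractions with $\omega(y_n)=0$ and $\|y_n\|<1/2^n$. To do this I would pick a strictly increasing sequence of functions $h_n\in\Aff_+(\TQT(A))$ (or in $\LAff_+$) with $h_n\nearrow h$, $h_n\triangleleft h_{n+1}$, and, by a telescoping trick, arrange that the "increments" $h_{n+1}-h_n$ are each realized by elements of small tracial oscillation. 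Concretely, using $(1)$ repeatedly (hereditary surjectivity applied inside hereditary subalgebras), build a $\subset$-nested sequence $y\gtrsim z_1\gtrsim z_2\gtrsim\cdots$ with $d_\tau(z_n)=h_n(\tau)$, then set the $n$-th orthogonal block to absorb the difference $h_{n+1}-h_n$. The function $g_\delta$-type truncations and Lemma \ref{jiu15-1} let me shrink $\omega$ of each block below $\ep/2^n$; rescaling via continuous functional calculus makes the norms summable. The passage from "$\omega(y_n)<\ep/2^n$" to genuinely "$\omega(y_n)=0$" for the first assertion of $(3)$ requires one more application of hereditary surjectivity to replace $y_n$ by a Cuntz-subequivalent element with $\omega=0$ exactly — here one uses that the map $\wh{[y_n]}$ can be replaced by a continuous affine function strictly below it, whose preimage has zero oscillation by Proposition \ref{1204-2}(iii).

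The main obstacle I anticipate is controlling the orthogonality and the oscillation of the blocks \emph{simultaneously} while keeping the total Cuntz class equal to $h$ on the nose (not merely up to $\ep$): Lemma \ref{jiu16-prop1} gives, from an $a\lesssim e$ with $\omega(a)<\ep$, orthogonal $b,c\in\Her(e)^1$ with $b$ absorbing most of $d_\tau(a)$, $\omega(b)<\ep$, and $c$ absorbing the complement $d_\tau(e)-d_\tau(a)$ up to $\ep$; iterating this inside $\Her(c)$ at scale $\ep/2^n$ produces the sequence $\{b_n\}$ of the "moreover" clause, and the telescoping sum $\sum d_\tau(b_n)$ converges to something within $\ep$ of $h(\tau)$ uniformly on the compact set $\ol{\QT(A)}^w$ (using Dini-type/uniformity as in Lemma \ref{shier14-3}). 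Upgrading from this $\ep$-approximate decomposition to the exact decomposition $y=\sum y_n$ with $d_\tau(y)=h(\tau)$ exactly is the delicate step: I would run the iteration with a nested sequence of exact targets $h_n\nearrow h$ furnished by $(1)$, choosing at stage $n$ an orthogonal block realizing exactly $h_n-h_{n-1}$ inside the "remaining room" $\Her(c_{n-1})$, which is legitimate because $h_n-h_{n-1}\in\LAff_+$ and $(h_n-h_{n-1})(\tau)<d_\tau(c_{n-1})$ by construction; the oscillation bound on each block then comes from Lemma \ref{jiu15-1} applied to a truncation, and a final diagonal application of $(1)$ (together with Proposition \ref{1204-2}(iii)) turns "$\omega<\ep/2^n$" into "$\omega=0$."
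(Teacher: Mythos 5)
Your cycle is organized as $(3)\Rightarrow(2)\Rightarrow(1)\Rightarrow(3)$, whereas the paper takes $(3)\Rightarrow(1)\Rightarrow(2)$ as trivial and puts all the work into $(2)\Rightarrow(3)$. The difference is not cosmetic: your step $(2)\Rightarrow(1)$ contains a genuine gap. You propose to write $f=\sup_n f_n$ with $f_n\in\Aff_+(\TQT(A))$, apply $(2)$ to each $f_n$ to get $y_n\lesssim a$ with $d_\tau(y_n)=f_n(\tau)$, and then take the supremum $\bigvee_n[y_n]$ in $\Cu(A)$. But the elements $y_n$ are produced independently, and the only relation between consecutive ones is $d_\tau(y_n)\le d_\tau(y_{n+1})$ for all $\tau$; without strict comparison this does \emph{not} yield $[y_n]\le[y_{n+1}]$, so there is no increasing sequence in $\Cu(A)$ whose supremum you can form. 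The phrase ``the comparison implicit in $y_n\triangleleft y_{n+1}$'' conflates the relation $\triangleleft$ on functions in $\Lsc({\rm F}(\Cu(A)))$ with an order relation on Cuntz classes; the passage from an inequality of rank functions to an inequality of Cuntz classes is exactly strict comparison, which this theorem deliberately does not assume (the whole point of the paper is to work without it). So as written, the equivalence of $(2)$ with $(1)$ and $(3)$ is not established.

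The repair is essentially already contained in your $(1)\Rightarrow(3)$ sketch, but it must be run from hypothesis $(2)$ rather than $(1)$: decompose $h=\sum_{i=1}^\infty h_i$ with each increment $h_i\in\Aff_+(\TQT(A))$ (Lin's decomposition, \cite[Proposition 3.2]{LinJFA}), and realize the increments by \emph{mutually orthogonal} elements inside a nested chain of hereditary subalgebras $\Her(c_0)\supset\Her(c_1)\supset\cdots$, using Lemma \ref{jiu16-prop1} at each stage to split off a block $b_n$ with $\omega(b_n)<\ep/2^n$ and a complement $c_n$ with enough rank left over for the next increment. Orthogonal sums add both in $\Cu(A)$ and under $d_\tau$, so $\sum_n y_n\lesssim e$ and $d_\tau(\sum_n y_n)=\sum_n h_n(\tau)=h(\tau)$ hold with no comparison hypothesis; this proves $(2)\Rightarrow(3)$ directly, and then $(3)\Rightarrow(1)\Rightarrow(2)$ are immediate. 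Two further technical points from the paper that your sketch omits: the strict inequalities $h_n(\tau)<d_\tau(b_n+r_n)$ needed to reapply $(2)$ at each stage are obtained by padding with small orthogonal elements $r_n$ produced from Glimm's lemma (this is why $A$ is first reduced to the non-elementary case), and the exact vanishing $\omega(y_n)=0$ comes for free from Proposition \ref{1204-2}(iii) because $d_\tau(y_n)=h_n(\tau)$ is continuous, not from a further application of hereditary surjectivity.
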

\begin{proof} 
${\rm (3)\Rightarrow (1)\Rightarrow (2)}$ are trivial. 
In the following we will 
show ${\rm (2)\Rightarrow (3)}.$ 
{{If $A$ is an elementary \CA, then $\TQT(A)$ is generated by one 2-quasitrace, which is a trivial case.}} 
Hence in the following we may assume that $A$ is non-elementary. 
Let $e\in (A\otimes\K)_+$ with $\|e\|=1$ and let $h\in \LAff_+\left(\TQT(A)\right)$ satisfying $h(\tau) <d_\tau(e)$ for all $\tau\in \TQT(A)\nzero.$ 
Let $S:=\ol{\QT(A)}^w.$ 
Let $\ep>0.$ Without loss of generality, we may assume that  
\beq\label{1202-1}
\ep <d_\tau(e)-h(\tau) \rforal  \tau\in S.
\eneq 
{{Note that $\TQT(A)=\R_+\cdot S$ and $S$ is compact. Then by \cite[Proposition 3.2]{LinJFA}, there is a sequence $\{h_i\}\subset \Aff_+\left(\TQT(A)\right)$ such that}}  
\beq\label{shi28-4}
h(\tau)=\sum_{i=1}^\infty h_i(\tau) \rforal \tau\in \TQT(A)\quad(=\R_+\cdot S).
\eneq

Since $A$ is non-elementary, there is $r' \in \Her(f_{1/4}(e))$ such that 
$\sup\{d_\tau(r'): \tau\in S\}<\ep/4.$ By using continuous functional calculus, we can find $\bar r, r\in\Her(r')_+^1$ such that $\bar r\cdot r=r.$ 
Let $g\in C_0((0,1])_+$ be the  function that $g(0)=0,$ $g(t)=1$ for $t\in [1/4,1],$ and $g$ is linear on $[0,1/4].$ Then $e\sim g(e)$ and $g(e)\cdot \bar r=\bar r.$ 
Let $\bar e:=g(e)-\bar r\in \Her(e)_+^1.$ 
Then 
\beq\label{shier10-1}
\bar e \cdot r=(g(e)-\bar r)r=0.
\eneq
For any $\tau\in S,$ 
$d_\tau(e)=d_\tau(g(e))=d_\tau(\bar e+\bar r)\le d_\tau(\bar e)+d_\tau(\bar r)<d_\tau(\bar e)+\ep.$ 
Thus 
\beq\label{1202-2}
h(\tau) \overset{\eqref{1202-1}}{<}
d_\tau(e) -\ep< d_\tau(\bar e) 
\rforal \tau\in S.
\eneq
Since $\Her(r)$ is simple and non-elementary, by inductively using Glimm's Lemma (see, for example, \cite[Proposition 4.10]{KR00}) we can obtain 
a sequence of mutually orthogonal positive elements $r_1,r_2,\cdots\in \Her(r)$ such that $\sup\{d_\tau(r_n):\tau\in S\}<\ep/4^n.$
Let $\dt_n:= \inf\{d_\tau(r_n):\tau\in S\}< \ep/4^n.$ 

{\bf Claim:} \emph{Let $c_0:=\bar e.$  There are sequences $\{a_n\}\subset (A\otimes\K)_+$
and $\{b_n\}, \{c_n\}\subset \Her(e)_+^1$ such that  for all $n\in\N,$} 

\emph{{\rm (i)} $a_n\lesssim c_{n-1}$ and $d_\tau(a_n)=h_n(\tau)$ for all $\tau\in\TQT(A);$}

\emph{{\rm (ii)} $b_n, c_n\in\Her(c_{n-1})_+^1,$ $b_nc_n=0,$ 
and $\omega(b_n)<\ep/2^n;$}

\emph{{\rm (iii)} $d_\tau(a_n)-\dt_n<d_\tau(b_n)\le d_\tau(a_n)$ for all $\tau\in S;$}

\emph{{\rm (iv)} $d_\tau(\bar e)-\sum_{i=1}^nd_\tau(a_i)
\le d_\tau(c_n)
$ for all $\tau\in S.$ }
\\
To prove the claim, we will construct $a_n, b_n, c_n$ inductively. 
By \eqref{1202-2}, 
we have  $h_1<\wh{[\bar e]}.$ Since (2)  holds, 
there is $a_1\in (A\otimes\K)_+$
such that 

(${\rm i'}$) $a_1\lesssim \bar e=c_0$ and $d_\tau(a_1)=h_1(\tau)$ for all $\tau\in\TQT(A).$ 
\\
By Proposition \ref{1204-2},  $\omega(a_1)=0.$ By Lemma  \ref{jiu16-prop1}, there are $b_1,c_1\in \Her(c_0)_+$ such that 

(${\rm ii'}$) $b_1,c_1\in \Her(c_0)_+^1,$ $b_1c_1=0,$
 and $\omega(b_1)<\ep/2;$ 

(${\rm iii'}$) $d_\tau(a_1)-\dt_1<d_\tau(b_1)\le d_\tau(a_1)$ for all $\tau\in S;$

(${\rm iv'}$) $d_\tau(\bar e)-d_\tau(a_1)
\le d_\tau(c_1)
$ for all $\tau\in S.$ 
\\
Assume that for $n\in\N$  
we have constructed $\{a_i\},\{b_i\},\{c_i\}$ ($1\le i\le n$) that
satisfy (i)-(iv). 
Let us proceed to the case $n+1:$ 
For all $\tau\in S,$ we have 
\beq\label{shi28-5}
h_{n+1}(\tau)\overset{\eqref{shi28-4}}{\le } h(\tau)-\sum_{i=1}^n h_i(\tau)
\overset{\eqref{1202-2}}{<} d_\tau({{\bar e}})- \sum_{i=1}^n h_i(\tau)
\overset{{\rm (i), (iv)}}{\le} d_\tau(c_n).
\eneq
\\
Since (2) holds, 
by \eqref{shi28-5},
there is $a_{n+1}\in (A\otimes\K)_+$
such that 

(${\rm i''}$) $a_{n+1}\lesssim c_n$ and $d_\tau(a_{n+1})=h_{n+1}(\tau)$ for all $\tau\in\TQT(A).$ 
\\
By Proposition \ref{1204-2}, 
$\omega(a_{n+1})=0.$  By Lemma \ref{jiu16-prop1}, there are $b_{n+1},c_{n+1}\in \Her(c_n)_+$ such that 

(${\rm ii''}$) $b_{n+1}, c_{n+1}\in \Her(c_n)_+^1,$ 
$b_{n+1}c_{n+1}=0,$ and $\omega(b_{n+1})<\ep/2^{n+1};$ 

(${\rm iii''}$) $d_\tau(a_{n+1})-\dt_{n+1}<d_\tau(b_{n+1})\le d_\tau(a_{n+1})$ for all $\tau\in S;$

(${\rm IV}$) $d_\tau(c_n)-d_\tau(a_{n+1})
\le d_\tau(c_{{n+1}})
$ for all $\tau\in S.$ 
\\
Then for all $\tau\in S,$ we have 
\beq
\hspace{-0.3in}
&&d_\tau(\bar e)-\sum_{i=1}^{n+1}d_\tau(a_i)
\overset{{\rm (iv)}}{\le}
d_\tau(c_n)-d_\tau(a_{n+1})
\overset{\rm (IV)}{\le} 
d_\tau(c_{{n+1}}).
\eneq
Hence we have 

(${\rm iv''}$) $d_\tau(\bar e)-\sum_{i=1}^{n+1}d_\tau(a_i) 
\le d_\tau(c_{n+1})
$ for all $\tau\in S.$ 
\\
Then (${\rm i''}$), (${\rm ii''}$), (${\rm iii''}$), (${\rm iv''}$) show that the 
$a_{n+1}, b_{n+1}, c_{n+1}$ satisfy the conditions of the claim. 
By induction, the claim holds.

Note that by (ii),  $\Her(c_0)\supset\Her(c_1)\supset\Her(c_2)\supset\cdots.$
Let $n,m\in\N$ with $n<m.$
By (ii), $b_m\in \Her(c_{m-1})\subset\Her(c_n)\subset\Her(b_n)^\perp.$ Hence 
$b_n\perp b_m$ $(n\neq m).$ 
Since $\bar e\perp r$ (see \eqref{shier10-1}) and $\{b_n\}\subset \Her(\bar e)$ and $\{r_n\}\subset \Her(r),$ we have 
\beq\label{shier10-3}
\mbox{$b_1,b_2,\cdots, r_1,r_2,\cdots$ are mutually orthogonal positive elements in $\Her(e)$.} 
\eneq
For each $n\in\N$ and for all $\tau\in S,$ we have 
\beq
h_n(\tau) \overset{\rm (i)}{=} d_\tau(a_n)\overset{\rm (iii)}{<} d_\tau(b_n)+\dt_n
\le d_\tau(b_n)+d_\tau(r_n)=d_\tau(b_n+r_n).
\eneq
Since $\TQT(A)=\R_+\cdot S,$ we have 
$
h_n(\tau) < d_\tau(b_n+r_n) \rforal \tau\in\TQT(A).
$
Since (2) holds,  
there is $x_n\in (A\otimes\K)_+$ such that $x_n\lesssim b_n+r_n$ and $d_\tau(x_n)=h_n(\tau)$ for all $\tau.$ By By Proposition \ref{1204-2} (iii), we have $\omega(x_n)=0$ for all $n.$

Since $A\otimes\K$ is stable, there are mutually orthogonal positive elements  $y_1,y_2,\cdots\in (A\otimes\K)_+^1$ such that 
$x_n\sim y_n$ and $\|y_n\|<1/2^n$ for all $n.$ 
Then $\omega(y_n)=\omega(x_n)=0$ for all $n.$
Put $y:=\sum_{n=1}^\infty y_n.$ 
Since $y_n\sim x_n\lesssim b_n+r_n$ for all $n,$ we have $\sum_{n=1}^m y_n\lesssim \sum_{n=1}^m (b_n+r_n)\lesssim a$ for all $m.$ 
Then  $y=\lim_m \sum_{n=1}^m y_n\lesssim a.$ For all $\tau\in \TQT(A),$ 
$d_\tau(y)=\sum_{n=1}^\infty d_\tau(y_n)=\sum_{n=1}^\infty d_\tau(x_n)=\sum_{n=1}^\infty h_n(\tau)=h(\tau).$ 
Thus $\Gamma$ is 
hereditary surjective. 
Moreover, by (i) and (iii),  for all $\tau\in S,$
we have $|h_n(\tau)-d_\tau(b_n)|=|d_\tau(a_n)-d_\tau(b_n)|\le \dt_n< \ep/2^n.$ Hence 
\beq\label{shier10-2}
\left|h(\tau)-\sum_{n=1}^\infty d_\tau(b_n)\right| 
\le  
\sum_{n=1}^\infty \left|h_n(\tau)-d_\tau(b_n)\right|< 
\sum_{n=1}^\infty \ep/2^n=\ep. 
\eneq
By \eqref{shier10-3}, (ii), and \eqref{shier10-2}, we see that the last statement of (3) also holds.
\end{proof} 


\begin{prop}\label{jiu21-6}
Let $A$ be a \CA. 
Let $a\in \Ped(A\otimes\K)_+^1,$  $b\in \Her(a)_+^1,$ and $\tau\in \TQT(A).$
Let $c=a^{1/2}b a^{1/2}.$
Then $\tau(a-c)\le d_\tau(a)-\tau(b).$ 
\end{prop}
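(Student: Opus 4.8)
The plan is to reduce the statement to a manipulation inside the commutative $C^*$-algebra generated by $a$, together with the trace property of quasitraces (Proposition \ref{jiu23-2}). Write $c = a^{1/2}b a^{1/2}$. The key observation is that $a - c = a^{1/2}(1-b)a^{1/2}$ where $1-b$ makes sense in $\widetilde{\Her(a)}$ (or, to stay in the algebra, approximate $1$ by an approximate unit of $\Her(a)$ that acts as a unit on $b$ and on $a$ up to small error; since everything lives in $\Ped(A\otimes\K)$, lower semicontinuity of $\tau$ from Proposition \ref{jiu25-3} lets us pass to the limit). So $a - c = (1-b)^{1/2} a (1-b)^{1/2}$ up to the usual rewriting, and applying the trace identity $\tau(x^*x) = \tau(xx^*)$ with $x = a^{1/2}(1-b)^{1/2}$ gives $\tau(a-c) = \tau\big((1-b)^{1/2} a (1-b)^{1/2}\big)$.

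Next I would bound $(1-b)^{1/2} a (1-b)^{1/2}$ above in the Cuntz/order sense. Since $0 \le b \le 1$ and $\|a\| \le 1$, we have $(1-b)^{1/2} a (1-b)^{1/2} \le (1-b)^{1/2} (1-b)^{1/2} \cdot \|a\| \le (1-b)$ — more carefully, $(1-b)^{1/2} a (1-b)^{1/2} \le (1-b)^{1/2} \mathbf{1} (1-b)^{1/2} = 1-b$ because $a \le \mathbf 1$ in $\widetilde{A\otimes\K}$. Hence, using that $\tau$ (extended to $M_n$ and to the unitization as needed) is order preserving on positive elements, $\tau(a-c) \le \tau(\mathbf 1 - b)$. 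But $\tau(\mathbf 1 - b)$ should be interpreted via the approximate-unit argument: for an approximate unit $\{e_\lambda\}$ of $\Her(a)$ with $e_\lambda b \to b$, we get $\tau(a-c) \le \lim_\lambda \tau(e_\lambda - b) = \lim_\lambda \tau(e_\lambda) - \tau(b)$, and $\lim_\lambda \tau(e_\lambda)$ over the approximate unit of $\Her(a)$ is exactly $d_\tau(a)$ by the definition of $d_\tau$ (or by Proposition \ref{shier25-1}). This yields $\tau(a-c) \le d_\tau(a) - \tau(b)$, as desired.

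The main obstacle is the bookkeeping around the missing unit: $\mathbf 1 \notin A\otimes\K$, so the clean identities $a - c = (1-b)^{1/2}a(1-b)^{1/2}$ and the estimate $(1-b)^{1/2}a(1-b)^{1/2}\le 1-b$ must be realized either in $\widetilde{A\otimes\K}$ (and then one checks $\tau$ extends appropriately, which is fine since $\tau$ extends to matrix algebras and behaves well on the unitization for the relevant differences of positive elements) or via a limiting argument over an approximate unit $\{e_\lambda\}\subset \Her(a)_+$ chosen with $\|e_\lambda b - b\| \to 0$. The cleanest route is probably: fix $\epsilon>0$, pick $e\in\Her(a)_+^1$ with $eb = b$ and $\tau(a - a^{1/2}e a^{1/2}) < \epsilon$ (possible by Proposition \ref{shier25-1} applied to $a$, since $d_\tau(a) = \lim_\lambda \tau(a^{1/2}e_\lambda a^{1/2})$ and $\tau(a^{1/2}ea^{1/2})$ increases to $d_\tau(a)$), then note $a - c = (a - a^{1/2}ea^{1/2}) + a^{1/2}(e-b)a^{1/2}$ with $e - b \ge 0$, apply the trace identity to the second term to get $\tau(a^{1/2}(e-b)a^{1/2}) = \tau((e-b)^{1/2}a(e-b)^{1/2}) \le \tau(e-b) = \tau(e) - \tau(b) \le d_\tau(a) - \tau(b)$, and let $\epsilon\to 0$. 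A secondary point to handle is that $\tau$ is only a priori continuous/linear on commutative $C^*$-subalgebras, so all the decompositions above should be arranged to take place inside a single abelian subalgebra (e.g. $C^*(a)$ for the first term, $C^*\big((e-b)^{1/2}a(e-b)^{1/2}\big)$ after the flip for the second) — the trace identity of Proposition \ref{jiu23-2} is exactly what bridges the two.
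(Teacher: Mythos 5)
Your overall strategy (the trace-flip $\tau(xx^*)=\tau(x^*x)$, order preservation with $a\le 1$, and an approximate unit of $\Her(a)$) is the same as the paper's, but there is a genuine gap at the step where you write $a-c=(a-a^{1/2}ea^{1/2})+a^{1/2}(e-b)a^{1/2}$ and then bound $\tau(a-c)$ by the sum of the $\tau$-values of the two summands. A $2$-quasitrace is additive only on commuting elements, and these two positive summands do not commute in general (conjugation by $a^{1/2}$ destroys the commutativity of $1-e$ and $e-b$), so neither $\tau(u+v)=\tau(u)+\tau(v)$ nor even $\tau(u+v)\le\tau(u)+\tau(v)$ is available; your closing remark about arranging each term inside an abelian subalgebra does not help, because the problem is the additivity over the sum, not the evaluation of each term separately. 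A related difficulty is the choice of $e$: you want $eb=b$ exactly (to get $e\ge b$ and $\tau(e-b)=\tau(e)-\tau(b)$) and simultaneously $\tau(a-a^{1/2}ea^{1/2})<\epsilon$, which you would control by working in $C^*(a)$; in general no $e$ does both, and if $e\notin C^*(a)$ then estimating $\tau(a-a^{1/2}ea^{1/2})$ runs into the same non-additivity.

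The paper's proof supplies exactly the missing device. It takes $a_n=f_{1/n}(a)$, uses norm-continuity of $\tau$ on $\Her(a)$ to write $\tau(a-c)=\lim_n\tau\bigl(a^{1/2}(a_n(1-b)a_n)a^{1/2}\bigr)$, applies your trace-flip inequality to get $\le\lim_n\tau(a_n(1-b)a_n)=\lim_n\tau(a_n^2-a_nba_n)$, and then --- this is the key point --- invokes \cite[II.2.6]{BH}, the Blackadar--Handelman approximate additivity of $2$-quasitraces on \emph{almost commuting} positive elements, together with $\lim_n\|[a_n^2,a_nba_n]\|=0$, to split the limit as $\lim_n(\tau(a_n^2)-\tau(a_nba_n))=d_\tau(a)-\tau(b)$. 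Without some such almost-commuting additivity lemma your decomposition cannot be pushed through; with it, the two pieces being separated must be arranged to asymptotically commute, as $a_n^2$ and $a_nba_n$ do, which your two summands do not.
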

\begin{proof} 
By \cite[II.2.3.]{BH}, $\tau$ is continuous on $\Her(a).$
Let $a_n:=f_{1/n}(a).$
Since $b\in \Her(a)_+^1,$ then 
$\lim_n[a_n^2, a_nba_n]=0.$ 
By  \cite[II.2.6]{BH}, 
we have 
\beq
\tau(a-c)&=&\lim_n\tau (a^{1/2}(a_n(1-b)a_n)a^{1/2})
 \le 
\lim_n \tau(a_n (1-b)a_n) 
\\&=&
\lim_n \tau(a_n^2-a_n ba_n) 
=\lim_n (\tau(a_n^2)-\tau(a_n ba_n))
=
d_\tau(a)-\tau(b). 
\eneq
\end{proof}

\begin{prop}\label{jiu23-6}
{\rm (A Dini-type result)}
Let $X$ be a compact space and let 
$f_1\ge f_2\ge f_3\ge\cdots$ be a decreasing sequence of 
real value upper semicontinuous functions on $X.$ Assume there is $r\in \R$
such that for all $x\in X,$ $\inf\{f_n(x): n\in\N\}<r.$ Then there is 
$m\in \N$ such that $f_m(x)<r$ for all $x\in X.$
\end{prop}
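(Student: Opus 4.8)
The plan is to prove this Dini-type result by a direct contradiction argument, mimicking the classical proof of Dini's theorem but adapted to the upper-semicontinuous, decreasing, "strictly below a constant" setting. The key point is that for a decreasing sequence of upper semicontinuous functions, the sets $\{x : f_n(x) < r\}$ are open, and the hypothesis says they cover $X$; compactness then finishes the job.

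\medskip

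\emph{Step 1: Express the conclusion as an open cover.} For each $n \in \N$, set $U_n := \{x \in X : f_n(x) < r\}$. Since $f_n$ is upper semicontinuous and $r \in \R$, the set $U_n = f_n^{-1}\bigl((-\infty, r)\bigr)$ is open in $X$. Because the sequence $\{f_n\}$ is decreasing, we have $f_n(x) \le f_m(x)$ whenever $n \ge m$, so $U_m \subseteq U_n$ for $m \le n$; that is, $\{U_n\}_{n\in\N}$ is an increasing family of open sets.

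\medskip

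\emph{Step 2: The $U_n$ cover $X$.} Let $x \in X$. The hypothesis $\inf\{f_n(x) : n \in \N\} < r$ means there is some $n \in \N$ with $f_n(x) < r$, i.e. $x \in U_n$. Hence $X = \bigcup_{n\in\N} U_n$.

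\medskip

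\emph{Step 3: Apply compactness.} Since $X$ is compact, there is a finite subcover $X = U_{n_1} \cup \cdots \cup U_{n_k}$. Let $m := \max\{n_1, \dots, n_k\}$. By the increasing property from Step 1, $U_{n_j} \subseteq U_m$ for every $j$, so $X = U_m$, which precisely says $f_m(x) < r$ for all $x \in X$. This completes the proof.

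\medskip

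I do not anticipate any genuine obstacle here: the only points requiring a word of care are that upper semicontinuity gives openness of $\{f_n < r\}$ (not $\{f_n \le r\}$), and that monotonicity of the sequence is what turns the pointwise statement $\inf_n f_n(x) < r$ into membership in a single $U_n$ and lets a finite subcover collapse to one index. Both are routine once the cover is set up correctly.
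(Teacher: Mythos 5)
Your proof is correct and is essentially identical to the paper's: both define the open sets $f_n^{-1}\bigl((-\infty,r)\bigr)$, note they form an increasing open cover of $X$ by the decreasing hypothesis and the pointwise infimum condition, and extract a single index from a finite subcover by compactness. No differences worth noting.
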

\begin{proof}
Define open sets $V_n:=f_n^{-1}((-\infty,r))$ for all $n\in\N.$ 
$\{f_n\}$  is decreasing implies 
$\{V_n\}$ is  increasing. 
$\inf\{f_n(x): n\in\N\}<r$ for all $x\in X$ implies 
$X=\cup_{n=1}^\infty V_n.$ $X$ is compact implies  there is $m\in\N$
such that $X=\cup_{n=1}^m V_n=V_m.$ 
Then the proposition follows. 
\end{proof}

\begin{prop}\label{shier05-1}
Let $A$ be a {{$\sigma$-unital algebraically simple}} \CA\ with 
$\QT(A)\neq\emptyset.$ 
Assume that  
$\Gamma$ is hereditary surjective. 
Let $a\in \Ped(A\otimes\K)_+$
and let $\ep>0.$  
Then there is $c\le a$ 
such that $\tau(a-c)<\ep$ 
for all $\tau\in \overline{\QT(A)}^w$
and $\omega(c)<\ep.$
\end{prop}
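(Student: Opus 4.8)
The plan is to feed the hereditary surjectivity of $\Gamma$ the right data and then extract the desired $c\le a$ from the sequence $\{b_n\}$ produced by Theorem \ref{shier09-1}(3). First I would normalize: since $A$ is $\sigma$-unital and algebraically simple, $\overline{\QT(A)}^w=S$ is compact, so it suffices to control the quantities uniformly over $S$. Apply Proposition \ref{jiu25-3} together with Dini's theorem (in the form of Proposition \ref{jiu23-6}) to replace $a$ by a slightly shrunk element $a':=(a-\eta)_+\le a$ with $\sup_{\tau\in S}\tau(a-a')$ as small as we wish; this lets us assume $a$ is ``cut down'' whenever convenient. Next, the function $\wh a:\tau\mapsto d_\tau(a)$ lies in $\LAff_+(\TQT(A))$, and since $A$ is simple with nontrivial quasitraces one has $d_\tau(a')<d_\tau(a)$ for all $\tau\neq 0$ after such a cut (or one chooses a function $h\in\LAff_+(\TQT(A))$ with $h(\tau)<d_\tau(a)$ and $\sup_{\tau\in S}(d_\tau(a)-h(\tau))$ small — this is where I would pick $h$ to approximate $\wh a$ from below; one valid choice is $h=(1-\kappa)\wh a$ for small $\kappa>0$, since $\wh a$ restricted to $S$ is bounded above by Dini applied to $\tau(f_{1/n}(a))\searrow d_\tau(a)$... wait, that sequence is increasing, so instead use that $\wh a$ is the increasing sup of $\wh{a_n}\in\Aff_+(\TQT(A))$ and pick one $\wh{a_m}$ with $\sup_{\tau\in S}(d_\tau(a)-\tau(f_{1/m}(a)))<\ep/4$, then set $h:=\wh{a_m}$).

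With $e:=a$ and $h$ as above, apply Theorem \ref{shier09-1}(3): since hereditary surjectivity is assumed, we get mutually orthogonal $\{b_n\}\subset\Her(a)_+$ with $\omega(b_n)<\ep/2^n$ for all $n$ and $|h(\tau)-\sum_{n=1}^\infty d_\tau(b_n)|<\ep/4$ for all $\tau\in S$. Truncate: choose $N$ with $\sum_{n>N}\|b_n\|<\ep/4$ small, and put $b:=\sum_{n=1}^N b_n\in\Her(a)_+$. Then $\omega(b)\le\sum_{n=1}^N\omega(b_n)<\ep$ by Proposition \ref{1204-2}(ii) (finite orthogonal sums), and for $\tau\in S$,
$$
d_\tau(a)-\sum_{n=1}^N d_\tau(b_n)\le \bigl(d_\tau(a)-h(\tau)\bigr)+\Bigl|h(\tau)-\sum_{n=1}^\infty d_\tau(b_n)\Bigr|+\sum_{n>N}d_\tau(b_n),
$$
and each term is small once the cut, the choice of $h$, and $N$ are chosen appropriately. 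Since $b=\sum_{n\le N}b_n$ is an orthogonal sum, $d_\tau(b)=\sum_{n\le N}d_\tau(b_n)$, so $d_\tau(a)-d_\tau(b)<\ep/2$ say for all $\tau\in S$.

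Finally set $c:=a^{1/2}\,b\,a^{1/2}\le a$; note $c\in\Her(a)_+$. By Proposition \ref{jiu21-6} (applied after normalizing $a$ and $b$ to the unit ball, which costs nothing by rescaling), $\tau(a-c)\le d_\tau(a)-\tau(b)$ for every $\tau\in\TQT(A)$. Now bound $d_\tau(a)-\tau(b)$: since $b$ is an orthogonal sum of the $b_n$ and $\omega(b_n)<\ep/2^n$, we have $d_\tau(b_n)-\tau(f_{1/k}(b_n))$ small uniformly, hence $d_\tau(b)-\tau(b)$ can be made $<\ep/4$ on $S$ by taking $b$ itself to already be of the form $f_\delta(b')$ (using Lemma \ref{jiu15-1} to preserve small oscillation under $f_\delta$), so that $\tau(b)\ge\tau(f_{1/k}(b))\approx d_\tau(b)$; alternatively absorb this into the construction by replacing each $b_n$ by $f_{\delta_n}(b_n)$ at the outset. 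Combining, $\tau(a-c)\le (d_\tau(a)-d_\tau(b))+(d_\tau(b)-\tau(b))<\ep/2+\ep/4<\ep$ for all $\tau\in S$, and $\omega(c)=\omega(b)<\ep$ since $c\sim b$ (as $c=a^{1/2}ba^{1/2}$ with $b\in\Her(a)$) by Proposition \ref{1204-2}(i). This gives the element $c$ as required.

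\textbf{Main obstacle.} The delicate point is matching the two different ``sizes'': Theorem \ref{shier09-1}(3) controls $\sum d_\tau(b_n)$ relative to $h$, but the conclusion asks for control of $\tau(a-c)$, which via Proposition \ref{jiu21-6} needs $\tau(b)$ (not $d_\tau(b)$) close to $d_\tau(a)$. Bridging $\tau(b)\approx d_\tau(b)$ requires exactly that $b$ have small tracial oscillation — which is why the $\omega(b_n)<\ep/2^n$ clause in \ref{shier09-1}(3) is essential — and requires the truncation argument to keep the sum finite so that Proposition \ref{1204-2}(ii) applies and so that $d_\tau$ is additive on the orthogonal sum. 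Getting all the $\ep$-bookkeeping (the cut $\eta$, the choice of $h=\wh{a_m}$, the truncation $N$, and the $f_\delta$-smoothing) to line up is the routine-but-careful part; the conceptual content is entirely in invoking \ref{shier09-1} and \ref{jiu21-6}.
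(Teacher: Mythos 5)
Your overall architecture is the right one and is close to the paper's: feed a function $h$ slightly below $\wh{[a]}$ into Theorem \ref{shier09-1}(3), get mutually orthogonal $b_n\in\Her(a)_+$ with $\omega(b_n)<\ep/2^n$ whose $d_\tau$'s sum to approximately $h$, compress by $a^{1/2}\cdot a^{1/2}$, and close the $\tau(b)$ versus $d_\tau(b)$ gap using the small oscillation of the $b_n$. But two steps, as you finally state them, do not go through. First, your settled choice of $h$: picking $m$ with $\sup_{\tau\in S}\bigl(d_\tau(a)-\tau(f_{1/m}(a))\bigr)<\ep/4$ is exactly the assertion $\omega(a)<\ep/4$ (Definition \ref{DefOS1}), which is not available — indeed $\wh{[a]}$ is continuous on $S$ precisely when $\omega(a)=0$ (Proposition \ref{1204-2}(iii)), so Dini cannot be applied to the increasing sequence $\wh{f_{1/m}(a)}$. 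The choice you floated and then abandoned, $h=(1-\kappa)\wh{[a]}$ with $\kappa$ small (or the paper's $\wh{[a]}-\ep/4$, legitimate since one may assume $\ep<\inf_S d_\tau(a)$ and $\sup_S d_\tau(a)<\infty$), is the one that works; you should not have traded it for $\wh{a_m}$.

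Second, the truncation. Theorem \ref{shier09-1}(3) gives norm decay $\|y_n\|<1/2^n$ for the $y_n$, not for the $b_n$; and even if $\sum_{n>N}\|b_n\|$ were small, that says nothing about $\sum_{n>N}d_\tau(b_n)$ (a positive element of tiny norm can have large $d_\tau$). Pointwise convergence of $\sum_n d_\tau(b_n)$ on $S$ does not yield a uniform tail bound, so your displayed estimate for $d_\tau(a)-\sum_{n\le N}d_\tau(b_n)$ is unjustified. The paper avoids truncating the $d_\tau$-series altogether: it sets $c_m:=a^{1/2}\bigl(\sum_{n=1}^m b_n^{1/m}\bigr)a^{1/2}$, uses Proposition \ref{jiu21-6} to get $\tau(a-c_m)\le d_\tau(a)-\sum_{n=1}^m\tau(b_n^{1/m})$, lets $m\to\infty$ so that $\tau(b_n^{1/m})\to d_\tau(b_n)$ (this replaces your $f_{\delta_n}$-smoothing), and only then applies the Dini-type Proposition \ref{jiu23-6} to the \emph{decreasing} sequence of \emph{continuous} functions $\tau\mapsto\tau(a-c_m)$ to extract a single $m_0$ that works uniformly on the compact set $S$. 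Routing the uniformity through $\tau(a-c_m)$ rather than through a tail estimate on $\sum d_\tau(b_n)$ is the missing idea; with it (and the corrected $h$), your argument becomes the paper's.
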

\begin{proof}
Without loss of generality, we may assume that $\ep<\inf\{d_\tau(a):\tau\in \ol{\QT(A)}^w\}.$ 
Note that $\wh{[a]}-\ep/4 \in \LAff_+(\TQT(A)).$
By Theorem \ref{shier09-1}, 
there is a sequence of mutually orthogonal positive elements $\{b_n\}\subset \Her(a)_+$ such that 
\beq\label{shier10-4}
\mbox{$ \omega(b_n)<\ep/2^n$ for all $n,$ and $|(d_\tau(a)-\ep/4)-\sum_{n=1}^\infty d_\tau(b_n)|<\ep/4.$}
\eneq
Define $c_m:=a^{1/2}(\sum_{n=1}^m b_n^{1/m})a^{1/2}\le a \in \Her(a)_+^1,$ then $\{c_m\}$ is an increasing sequence and $c_m\sim \sum_{n=1}^m b_n^{1/m}$ for all $m.$ 
By Proposition \ref{1204-2}, we have  
\beq
\omega(c_m)=\omega\left(\sum_{n=1}^m b_n^{1/m}\right) \le \sum_{n=1}^m\omega(b_n)\overset{\eqref{shier10-4}}{\le} \ep.
\eneq
By Proposition \ref{jiu21-6}, for all $\tau\in\ol{\QT(A)}^w,$ we have 
\beq\label{shier10-5}
\hspace{-0.2in}
\lim_m \tau(a-c_m)
&\le & 
\lim_m \left(d_\tau(a)-\tau\left(\sum_{n=1}^m b_n^{1/m}\right)\right)
\\&=& d_\tau(a)-\lim_m \sum_{n=1}^m \tau\left(b_n^{1/m}\right)
=
d_\tau(a)- \sum_{n=1}^\infty d_\tau(b_n)
\overset{\eqref{shier10-4}}{\le} 
\ep. 
\eneq 
Note that the maps $\phi_m:  \ol{\QT(A)}^w\to [0,\infty),$ $\tau\mapsto \tau(a-c_m)$ are continuous on the compact set $ \ol{\QT(A)}^w,$
and $\{\phi_m\}$ is a monotone decreasing sequence that go below $\ep$ pointwisely. Then 
by Proposition \ref{jiu23-6}, 
there is $m_0$ such that 
\beq
\tau(a-c_{m_0})<\ep
\rforal \tau\in \ol{\QT(A)}^w.
\eneq
Then $c:=c_{m_0}$  is the desired element. 
\end{proof}

\begin{thm}\label{jiu18-6}
Let $A$ be a $\sigma$-unital  simple \CA\ with 
$\QT(A)\neq\emptyset.$ 
Assume that  
$\Gamma$ is  hereditary surjective. 
Then $A$ has tracial approximate oscillation zero. 
\end{thm}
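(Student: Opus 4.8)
The plan is to deduce tracial approximate oscillation zero from hereditary surjectivity by combining Proposition~\ref{shier05-1} with a reduction to the algebraically simple case. First I would reduce to the case where $A$ is algebraically simple, i.e. $A=\Ped(A)$. Indeed, if $A$ is simple and $\sigma$-unital with $\QT(A)\neq\emptyset$, pick any nonzero $a_0\in\Ped(A)_+$ and set $A_0:=\Her_A(a_0)$; then $A_0$ is algebraically simple, $\sigma$-unital, $A_0\otimes\K$ is stably isomorphic to $A\otimes\K$, and $\TQT(A_0)$ is identified with $\TQT(A)$ in a way that intertwines the two canonical maps $\Gamma$. So hereditary surjectivity passes to $A_0$, and since tracial approximate oscillation zero for a $C^*$-algebra is defined via a stably isomorphic algebraically simple model, it suffices to prove the statement for $A$ algebraically simple.

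So assume $A$ is algebraically simple. I must show every $a\in\Ped(A\otimes\K)_+$ has tracial approximate oscillation zero, i.e. for every $\ep>0$ there is $c\in\Her_A(a)_+$ with $\|c\|\le\|a\|$, $\|a-c\|_2<\ep$, and $\omega(c)<\ep$. Proposition~\ref{shier05-1} almost delivers this: it produces $c\le a$ with $\omega(c)<\ep$ and $\tau(a-c)<\ep$ for all $\tau\in\ol{\QT(A)}^w$. The remaining gap is to convert the bound $\tau(a-c)<\ep$ (a bound on the "difference seen by traces") into the $\|\cdot\|_2$-bound $\|a-c\|_2<\ep$. Here one uses that $0\le a-c\le a$ together with $\|a\|\le 1$ (after normalizing), so that $(a-c)^2\le \|a-c\|\,(a-c)\le a-c$, whence $\tau((a-c)^2)\le\tau(a-c)<\ep$ for all $\tau\in\ol{\QT(A)}^w$, and therefore $\|a-c\|_2=\sup_\tau\tau((a-c)^2)^{1/2}\le\ep^{1/2}$; running Proposition~\ref{shier05-1} with $\ep$ replaced by $\ep^2$ then gives $\|a-c\|_2<\ep$. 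Since $c\le a$ we automatically have $\|c\|\le\|a\|$, and $c\in\Her_A(a)$ because $c$ was constructed inside $\Her_A(a)$ in the proof of Proposition~\ref{shier05-1}. This shows $a$ has tracial approximate oscillation zero, and since $a\in\Ped(A\otimes\K)_+$ was arbitrary, $A$ has tracial approximate oscillation zero.

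I expect the main obstacle to be not in this argument but in the bookkeeping of the reduction step: one must check carefully that passing to the hereditary subalgebra $A_0=\Her_A(a_0)$ genuinely preserves hereditary surjectivity of $\Gamma$, which requires that the identification $\TQT(A_0)\cong\TQT(A)$ respects the rank functions $d_\tau$ and the Cuntz subequivalence relation $\lesssim$ on $(A_0\otimes\K)_+\hookrightarrow(A\otimes\K)_+$ — this is essentially the content of stable isomorphism together with the fact that $\Cu$ and $\TQT$ are stable invariants, but it should be stated. A minor point to handle is the normalization $\|a\|\le 1$: if $\|a\|=:M$, apply the argument to $a/M$ and rescale, or simply absorb $M$ into the constants, using $(a-c)^2\le M(a-c)$. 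All other ingredients (Dini-type compactness, $\omega(c_m)\le\sum\omega(b_n)$, Proposition~\ref{jiu21-6}) are already packaged inside Proposition~\ref{shier05-1}, so no further work is needed there.
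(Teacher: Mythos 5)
Your proposal is correct and follows essentially the same route as the paper: reduce to the algebraically simple case via a hereditary subalgebra $\Her(e)$, $e\in\Ped(A)_+^1\nzero$, and Brown's stable isomorphism theorem, then apply Proposition \ref{shier05-1} with $\ep^2$ in place of $\ep$ and use $\tau((a-c)^2)\le\tau(a-c)<\ep^2$ (valid since $0\le a-c\le a$ with $\|a\|\le 1$) to convert the trace bound into the $\|\cdot\|_2$ bound. The bookkeeping points you flag (stability of $\TQT$, $\Cu$, and $\Gamma$ under the identification, and the normalization of $\|a\|$) are exactly what the paper's proof implicitly relies on.
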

\begin{proof}
Let $e\in \Ped(A)_+^1\setminus\{0\}.$ 
Then $\Her(e)$ is algebraically simple (see Proposition \ref{shi15-1}). 
By Brown's stably isomorphism theorem (\cite{Br77}), 
$A\otimes \K\cong \Her(e) \otimes \K.$ By replacing $A$ by $\Her(e),$ we may assume that $A$ is algebraically simple. 

Let $a\in \Ped(A\otimes\K)_+^1.$ 
By Proposition \ref{shier05-1},
there is $b\le a$ 
such that $\tau(a-b)<\ep^2$ 
for all $\tau\in \ol{\QT(A)}^w,$
and $\omega(b)<\ep^2<\ep.$
Then 
$
\|a-b\|_2 
=
\sup_{\tau\in \ol{\QT(A)}^w}\tau((a-b)^2)^{1/2}
\le 
\sup_{\tau\in \ol{\QT(A)}^w}\tau(a-b)^{1/2} 
\le \ep.  
$ Thus $a$ has tracial approximate oscillation zero. 
\end{proof}

\begin{thm}
\label{jiu18-a1}
{\rm (Theorem A)}
Let $A$ be a separable
simple \CA\ of stable rank one, 
then $A$ has tracial approximate oscillation zero. 
Moreover,  $l^\infty(A)/
{{J_A}}$ has real rank zero. 
\end{thm}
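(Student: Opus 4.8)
The plan is to obtain this statement by assembling the three main results already established in Sections~3--5, together with a small amount of bookkeeping at the start; no new estimate is needed once those results are in hand.

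First I would make sure the statement is meaningful, i.e. that $\QT(A)\neq\emptyset$, since $J_A$ and the phrase ``tracial approximate oscillation zero'' are only defined in that case. If $A=0$ this is vacuous. Otherwise, since $A$ has stable rank one every amplification $M_n(\widetilde{A})$ also has stable rank one and is therefore finite, so $A$ is stably finite; one then invokes the existence of a nonzero densely defined $2$-quasitrace on a $\sigma$-unital stably finite simple \CA\ to get, after normalizing on the Pedersen ideal, $\QT(A)\neq\emptyset$. (Should one prefer not to rely on that existence statement in full generality, the remaining case $\QT(A)=\emptyset$ is harmless: then $\|\cdot\|_2\equiv 0$ on $\Ped(A)$, whence $J_A=l^\infty(A)$ and $l^\infty(A)/J_A=0$, which has real rank zero, and the conclusion holds trivially.) From here on I would assume $\QT(A)\neq\emptyset$.

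Next I would apply Theorem~\ref{shi21-3}: a simple separable \CA\ of stable rank one has hereditary surjective canonical map $\Gamma\colon\Cu(A)\to\LAff_+(\TQT(A))$. This is where the substantive external input enters --- Theorem~\ref{jiu21-a1} (the Antoine--Perera--Robert--Thiel construction of $b\in\Cu(A)$ with $\widehat b=f$, the infimum $b\wedge a$, and the identity $\widehat{b\wedge a}=\widehat b\wedge\widehat a$) fed through the identification $\Delta^{*}$ of Proposition~\ref{shi21-4} --- but at the level of this proof it is a citation. Since $A$ is separable it is $\sigma$-unital, so Theorem~\ref{jiu18-6} now applies and yields that $A$ has tracial approximate oscillation zero. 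This second step is the heart of the development: internally it runs through Proposition~\ref{shier05-1}, Theorem~\ref{shier09-1}, and Lin's orthogonality lemma (Lemma~\ref{jiu16-prop1}) with its inductive construction of mutually orthogonal positive elements $\{b_n\}$ in $\Her_A(a)$; all of that is already in place.

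Finally I would invoke Theorem~\ref{1204-6}: for a separable simple \CA\ $A$ with $\QT(A)\neq\emptyset$, the property that every $a\in\Ped(A\otimes\K)_+$ (equivalently every $a\in\Ped(A)_+$) has tracial approximate oscillation zero is equivalent to $l^\infty(A)/J_A$ having real rank zero; applying this to the conclusion of the previous paragraph gives the ``moreover'' clause and finishes the proof. The only real obstacle in writing this argument is the opening bookkeeping --- ensuring $\QT(A)\neq\emptyset$ (equivalently $\TQT(A)\neq\{0\}$) so that the conclusion is not vacuous --- and checking that the hypotheses of Theorems~\ref{shi21-3}, \ref{jiu18-6}, and~\ref{1204-6} are met verbatim; beyond that there is no further analysis to carry out.
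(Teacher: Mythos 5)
Your proposal is correct and follows exactly the route of the paper's own proof: Theorem~\ref{shi21-3} gives hereditary surjectivity of $\Gamma$, Theorem~\ref{jiu18-6} then gives tracial approximate oscillation zero, and Theorem~\ref{1204-6} converts this into real rank zero of $l^\infty(A)/J_A$. Your opening discussion of why $\QT(A)\neq\emptyset$ (or why the statement is vacuous otherwise) is a point the paper leaves implicit, but it does not change the argument.
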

\begin{proof}
For the first statement, apply Theorem \ref{shi21-3} 
and Theorem \ref{jiu18-6}. 
For the last statement, apply 
Theorem \ref{1204-6} and done.
\end{proof}

\begin{rem}
The converse of above theorem is true while assuming strict comparison, see \cite[Theorem 1.1]{FLosc}. The converse of above theorem is not true without strict comparison: Villadsen's algebra of second type has unique trace (\cite[Section 6]{V99}), hence has tracial approximate oscillation zero (\cite[Theorem 5.9]{FLosc}), but  not have stable rank one. 
\end{rem}

\section{Applications}

The readers are referred to \cite[Definition 2.1]{EHT} for the definition of diagonal AH-algebras, and are referred to 
\cite{N22}
for the definition of  (URP) and (COS). 

\begin{thm}\label{shier26-1}
Let $A$ be a simple separable unital \CA. Then $A$ has tracial approximate oscillation zero, and $l^\infty(A)/J_A$ has real rank zero, 
if one of the following holds: 

(1) $A$ is a diagonal AH-algebra. 

(2) $A= C(X)\rtimes \Z^d,$ where  $(X,\Z^d)$ is  a minimal free topological dynamical system. 

(3) $A=C(X)\rtimes \Gamma,$ where   $\Gamma$ is  a countable discrete amenable group,
and $(X,\Gamma)$ is a minimal free topological 
dynamical system with (URP) and (COS).

\end{thm}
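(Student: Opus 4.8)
The plan is to reduce each of the three cases to the main theorem, Theorem~\ref{jiu18-a1}, by verifying that the algebra in question is separable, simple, and of stable rank one; once stable rank one is in hand, Theorem~\ref{jiu18-a1} delivers both conclusions (tracial approximate oscillation zero and real rank zero of $l^\infty(A)/J_A$) with no further work. So the entire content of the proof is a citation-gathering exercise: for each class of examples, point to the literature that establishes stable rank one.

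For case (1), diagonal AH-algebras, I would cite Elliott--Ho--Toms \cite{EHT}, which is precisely the reference already mentioned in the introduction as showing that simple diagonal AH-algebras have stable rank one; then apply Theorem~\ref{jiu18-a1}. For case (2), $C(X)\rtimes\Z^d$ with $(X,\Z^d)$ minimal and free, I would invoke the known result that such crossed products have stable rank one --- this follows from the work on mean dimension zero / the structure theory of $\Z^d$-actions (e.g.\ via \cite{N22} or the references therein on (URP) and (COS), noting that free minimal $\Z^d$-systems automatically satisfy these properties, so case (2) is in fact a special case of (3)). For case (3), $C(X)\rtimes\Gamma$ with $\Gamma$ countable discrete amenable and $(X,\Gamma)$ minimal free with (URP) and (COS), I would cite the result (again \cite{N22}) that (URP) plus (COS) implies the crossed product has stable rank one, and then again apply Theorem~\ref{jiu18-a1}.

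The organizational point to make explicit is that (2) follows from (3): a free minimal $\Z^d$-action on a compact metric space satisfies both (URP) and (COS) (this is where the small-boundary-property heritage of the whole circle of ideas enters, since $\Z^d$-systems have the small boundary property and hence the required regularity), so it suffices to prove (3) and observe that (2) is subsumed. Thus the proof really has only two substantive lines: ``diagonal AH-algebras have stable rank one by \cite{EHT}'' and ``crossed products as in (3) have stable rank one by \cite{N22}, and (2) is a special case of (3)'', each followed by ``now apply Theorem~\ref{jiu18-a1}.''

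The main obstacle --- to the extent there is one --- is not mathematical but bibliographic: making sure the cited stable-rank-one results are stated in the generality needed (in particular, for the crossed-product case, that (URP)+(COS) genuinely yields stable rank one for the full crossed product, not merely for a tracially large hereditary subalgebra, and that separability and simplicity of $C(X)\rtimes\Gamma$ hold --- simplicity follows from minimality and freeness of the action together with amenability of $\Gamma$, and separability from metrizability of $X$). If any cited theorem only gives ``almost stable rank one'' rather than stable rank one, one would need the additional input that almost stable rank one plus surjectivity of $\Gamma$ (available here via \cite{APRT}) upgrades to the hereditary-surjectivity hypothesis of Theorem~\ref{jiu18-6}, but I expect the references to give stable rank one outright, so this contingency should not arise.
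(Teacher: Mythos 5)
Your proposal is correct and follows essentially the same route as the paper: reduce all three cases to Theorem \ref{jiu18-a1} by establishing stable rank one, citing \cite{EHT} for diagonal AH-algebras and the crossed-product literature for (2) and (3). The only discrepancy is bibliographic: the paper attributes the stable-rank-one results for the crossed products to Li--Niu \cite[Theorem 7.8, Corollary 7.9]{LN} rather than to \cite{N22} (which is cited only for the definitions of (URP) and (COS)), but this does not affect the argument.
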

\begin{proof}
By Theorem \ref{jiu18-a1}, it is suffices to show $A$ has stable rank one: 

(1) By \cite[Theorem 4.1]{EHT}, simple diagonal AH-algebras are have stable rank one.

(2) By  \cite[Corollary  7.9]{LN}, $C(X)\rtimes \Z^d$ has stable rank one. 

(3) By \cite[Theorem 7.8]{LN}, 
$C(X)\rtimes \Gamma$ has stable rank one. 
\end{proof}

\textsc{Xuanlong Fu}

Department of Mathematics, 
Tongji University, 
1239 Siping Road, 
Yangpu District, 
Shanghai, China, 200092. 

E-mail: xuanlongfu@tongji.edu.cn. 

\appendix
\section{Appendix}

This appendix is devoted to certain folklore results. 
We claim no originality of the results in this appendix. 

The following proposition is contained in the proof of \cite[Theorem 2.1 (iii), Corollary 2.2]{Tik14}. The original statement assumed that $A$ is $\sigma$-unital, which is not needed in here. We include a proof below for completeness. 
\begin{prop}\label{shi15-1} 
{\rm (cf.~\cite[Theorem 2.1 (iii)]{Tik14})}
Let $A$ be a simple  \CA. Then $\Ped(A)$ is algebraically simple. 
Let  $a\in\Ped(A)_+,$ then $\Her_A(a)$ is algebraically simple. 

\end{prop}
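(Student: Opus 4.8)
The statement is that for a simple \CA\ $A$, $\Ped(A)$ is algebraically simple, and for $a \in \Ped(A)_+$, $\Her_A(a)$ is algebraically simple. Recall that a \CA\ $B$ is \emph{algebraically simple} if it has no nonzero proper (two-sided, not necessarily closed) ideals; equivalently, $B = \Ped(B)$ and $B$ is simple. So the plan has two parts: first show $\Ped(\Ped(A)) = \Ped(A)$, i.e.\ that the Pedersen ideal is idempotent under the Pedersen-ideal operation; second, deduce the hereditary subalgebra statement.

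For the first part, I would argue as follows. The Pedersen ideal $\Ped(A)$ is the smallest dense (two-sided) ideal of $A$, and it is spanned by elements of the form $x$ with $x \in A_+$ satisfying $x = f(y)$ for some $y \in A_+$ and $f \in C_0((0,\infty))_+$ vanishing near $0$ — more precisely, $\Ped(A)_+$ consists of positive elements $b$ such that $b \lesssim$ (in a strong, "compactly supported" sense) something, and in particular for $b \in \Ped(A)_+$ one has $b = (b - \varepsilon)_+ \cdot z$ for suitable $z$; the key structural fact from \cite[5.6]{Pedbk} is that $\Ped(A)$ is a union of hereditary cones on which everything is "already compactly supported." Concretely: if $b \in \Ped(A)_+$, then choosing a function $g \in C_0((0,\infty))_+$ with $g \equiv 1$ on the spectrum of $b$ away from $0$, one can write $b = g_1(b) b g_1(b)$ where $g_1$ again has compact support in $(0,\infty)$, and since $\Ped(A)$ is an ideal containing $b$, all of $g_1(b), b g_1(b) \in \Ped(A)$. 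The upshot is that $b$ lies in $\Ped(\Ped(A))$ because within $\Ped(A)$ the element $b$ is supported by an element $g_1(b)$ that acts as a local unit. Iterating/spanning, $\Ped(A) \subseteq \Ped(\Ped(A)) \subseteq \Ped(A)$, so they are equal, and since $A$ simple forces $\Ped(A)$ to have no proper nonzero ideals (a proper ideal of $\Ped(A)$ would generate a proper dense ideal of $A$, contradicting minimality of $\Ped(A)$, or would have closure a proper closed ideal of $A$, contradicting simplicity), $\Ped(A)$ is algebraically simple.

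For the second part, let $a \in \Ped(A)_+$ and set $B := \Her_A(a) = \overline{aAa}$. By \cite[5.6.2]{Pedbk} we have $C^*(a) \subseteq \Ped(A)$, and more is true: I claim $B \subseteq \Ped(A)$, since $B$ is generated as a hereditary subalgebra by $a \in \Ped(A)$ and $\Ped(A)$ is a (hereditary) ideal of $A$, so $aAa \subseteq \Ped(A)$ and hence $B \subseteq \Ped(A)$; but then $B = B \cap \Ped(A) \subseteq \Ped(B)$? — here I need care, since $\Ped(B)$ is computed inside $B$. The cleaner route: $B$ is itself a simple \CA\ (hereditary subalgebra of a simple \CA), and $a$ is a full element of $B$ with $a \in \Ped(B)$ because $a = h(a) a h(a)$ for $h$ supported compactly in $(0,\infty)$, so $a$ has a local unit in $B$; then the same idempotency argument as above, applied to $B$ with the additional input that $a$ is full and has compact "support" in $B$, shows $\Ped(B) = B$. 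Combined with simplicity of $B$, this gives algebraic simplicity of $\Her_A(a)$.

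The main obstacle I anticipate is the careful bookkeeping around "compact support in $(0,\infty)$" and verifying that the local-unit trick genuinely places $b$ (resp.\ $a$) inside the doubly-iterated Pedersen ideal — this is exactly the content hidden in \cite[5.6]{Pedbk} and is the step where one must invoke the precise description of $\Ped$ rather than just its characterization as the minimal dense ideal. The reduction to simplicity of $B$ and the passage between "$B = \Ped(B)$'' and "algebraically simple'' are routine once the idempotency statement $\Ped(\Ped(A)) = \Ped(A)$ is in hand.
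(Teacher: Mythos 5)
Your outline does not close the key logical step in either half, so I would classify it as having genuine gaps rather than as a different correct route.

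For the first half, the parenthetical justification ``a proper ideal of $\Ped(A)$ would generate a proper dense ideal of $A$, contradicting minimality of $\Ped(A)$'' does not work: if $J$ is a nonzero ideal of the (non-closed) algebra $\Ped(A)$, the ideal $I$ of $A$ generated by $J$ is dense by simplicity, and minimality only gives $\Ped(A)\subseteq I$, not $\Ped(A)\subseteq J$ --- there is no contradiction. The entire content of the proof is to manufacture, \emph{inside} $J$, a set that is simultaneously an ideal of $A$. The paper does this directly: for $0\neq x\in\Ped(A)$ it considers $I(x)=\{\sum_{i=1}^n y_i x^*x z_i: y_i,z_i\in\Ped(A)\}$, which lies in the ideal of $\Ped(A)$ generated by $x$ and is an ideal of $A$ because $\Ped(A)$ is an ideal of $A$; density plus minimality then give $\Ped(A)=I(x)$. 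Your proposed patch via local units is also not in working order: for a general $b\in\Ped(A)_+$ there is no $g_1\in C_0((0,\infty))$ with $g_1(b)b=b$, since that forces $\spc(b)\setminus\{0\}$ to be bounded away from $0$ (take any positive element with spectrum $[0,1]$ in a unital simple $A$, where $\Ped(A)=A$). Elements of $\Ped(A)$ do admit local units in $\Ped(A)$, but the local unit is not a continuous function of the element itself, and even granting local units you still need the ``$J$ contains an ideal of $A$'' step. Finally, $\Ped(\Ped(A))$ is not a defined object, since $\Ped$ is an operation on $C^*$-algebras and $\Ped(A)$ is not closed; the characterization ``algebraically simple $\Leftrightarrow$ simple and equal to its own Pedersen ideal'' is available for $\Her_A(a)$ but not for $\Ped(A)$.

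For the second half, your ``cleaner route'' rests on the same false identity $a=h(a)ah(a)$ with $h$ compactly supported, and then asserts that $a\in\Ped(B)$ for the simple algebra $B=\Her_A(a)$ yields $\Ped(B)=B$; but knowing that a single strictly positive element lies in $\Ped(B)$ does not give $\Ped(B)=B$ --- that implication (hereditary subalgebras of Pedersen-ideal elements stay in the Pedersen ideal) is essentially the statement being proved. The paper sidesteps all of this with a sandwiching trick: given $b\in\Her_A(a)_+$ and $0\neq x\in\Her_A(a)$, algebraic simplicity of $\Ped(A)$ (from the first half) yields $b^{1/3}=\sum_i y_i(axa)z_i$ with $y_i,z_i\in\Ped(A)$, and then $b=\sum_i(b^{1/3}y_ia)\,x\,(az_ib^{1/3})$ exhibits $b$ in the ideal of $\Her_A(a)$ generated by $x$, because $b^{1/3}y_ia$ and $az_ib^{1/3}$ lie in $\Her_A(a)$. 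I would adopt that device; it replaces exactly the Pedersen-ideal bookkeeping you identified as the obstacle.
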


\begin{proof}
Let $x\in \Ped(A)\nzero,$ and let 
$I(x):=\{\sum_{i=1}^ny_ix^*xz_i: n\in\N, y_i,z_i\in\Ped(A), i=1,\cdots,n\}.$ 
$I(x)$ is contained in the  algebraic  ideal generated by $x$ in $\Ped(A).$ $I(x)$ is also an algebraic ideal of $A.$ $A$ is simple implies $I(x)$ is dense in $A.$ 
Then $\Ped(A)\subset I(x).$ 
It follows that $\Ped(A)=I(x)$ and  $\Ped(A)$ is algebraically simple.  

Let $a\in \Ped(A)_+\nzero,$ $b\in\Her_A(a)_+,$ and $x\in\Her_A(a)\nzero.$ Then $a^{1/3}\in \Ped(A)$ and $axa\neq 0.$ 
Since $\Ped(A)$ is algebraically simple, 
there are $y_1,\cdots y_n,z_1,\cdots,z_n\in \Ped(A)$ such that 
$b^{1/3}=\sum_{i=1}^n y_iaxa z_i.$ Then $b=\sum_{i=1}^n (b^{1/3}y_ia)x(a z_ib^{1/3}).$ Note that $b^{1/3}y_ia,a z_ib^{1/3}\in \Her_A(a)$ for all $i.$ Hence $b$ lies in the algebraic ideal generated by $x$ in $\Her_A(a).$ Thus  $\Her_A(a)$ is algebraically simple. 
\end{proof}

See Definition \ref{jiu21-d} for the topology on $\QT_2(A).$
The equivalence of (i) and (iii) in the following proposition was shown in the proof of \cite[Proposition 5.5]{ERS}. 
We mainly show here both (i) and (iii) are equivalent to (ii).  
\begin{prop}\label{jiu29-2}
{\rm (cf. \cite[Proposition 5.5]{ERS})}
Let $A$ be a simple \CA. 
Let $\{\tau_i\}\subset \QT_2(A)$ be a net and let  
$\tau\in  \QT_2(A)$. The following are equivalent:  

{\rm (i)}  $\{\tau_i\}$ converges to $\tau;$

{\rm (ii)}  $\lim_i \tau_i(a)= \tau(a)$ for all $a\in \Ped(A\otimes\K)_+;$ 

{\rm (iii)} $\lim_i \tau_i((a-\ep)_+)= \tau((a-\ep)_+)$ for all $a\in \Ped(A\otimes\K)_+$ and all $\ep>0.$ 

\end{prop}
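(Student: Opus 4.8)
The plan is to prove the cycle $(\mathrm{ii})\Rightarrow(\mathrm{i})\Rightarrow(\mathrm{iii})\Rightarrow(\mathrm{ii})$; the equivalence $(\mathrm{i})\Leftrightarrow(\mathrm{iii})$ is in addition contained in \cite[Proposition 5.5]{ERS}, so the genuinely new point is $(\mathrm{iii})\Rightarrow(\mathrm{ii})$. The two implications $(\mathrm{ii})\Rightarrow(\mathrm{i})$ and $(\mathrm{i})\Rightarrow(\mathrm{iii})$ are soft, resting on three standard facts about a lower semicontinuous $2$-quasitrace $\tau$: that $(a-\ep)_+\in\Ped(A\otimes\K)_+$ for every $a\in(A\otimes\K)_+,\ep>0$ (cf.\ \cite[5.6]{Pedbk}); that $\tau$ is order preserving, so $\tau((a-\ep)_+)\le\tau(a)$; and that $\tau(a)=\sup_{\ep>0}\tau((a-\ep)_+)$ by lower semicontinuity. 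For $(\mathrm{ii})\Rightarrow(\mathrm{i})$: given $a\in(A\otimes\K)_+$ and $\ep>0$, (ii) applied to $(a-\ep)_+\in\Ped(A\otimes\K)_+$ gives $\tau_i((a-\ep)_+)\to\tau((a-\ep)_+)\le\tau(a)$, the first inequality of Definition \ref{jiu21-d}, while $\tau_i((a-\ep)_+)\le\tau_i(a)$ yields $\tau((a-\ep)_+)\le\liminf_i\tau_i(a)$ for every $\ep$, hence $\tau(a)\le\liminf_i\tau_i(a)$. For $(\mathrm{i})\Rightarrow(\mathrm{iii})$: fix $a\in\Ped(A\otimes\K)_+$ and $\delta>0$; applying the defining inequalities of convergence in $\QT_2(A)$ to $(a-\delta)_+$ and to $(a-\delta')_+$ (with suitable auxiliary parameter) sandwiches $\liminf_i\tau_i((a-\delta)_+)$ and $\limsup_i\tau_i((a-\delta)_+)$ between $\tau((a-\delta)_+)$ and $\tau((a-\delta')_+)$ for all $\delta'\in(0,\delta)$; letting $\delta'\uparrow\delta$ and using continuity of $\tau$ on the commutative algebra $C^*(a)\subset\Ped(A\otimes\K)$ (automatic by \cite[II.2.3]{BH} when $\tau\neq\tau_\infty$, trivial otherwise) gives $\tau_i((a-\delta)_+)\to\tau((a-\delta)_+)$.

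For $(\mathrm{iii})\Rightarrow(\mathrm{ii})$, fix $c\in\Ped(A\otimes\K)_+$. If $\tau=\tau_\infty$ the conclusion is immediate, since for small $\ep$ with $(c-\ep)_+\neq 0$ one has $\tau_i(c)\ge\tau_i((c-\ep)_+)\to\infty=\tau(c)$ by (iii). So assume $\tau\neq\tau_\infty$; then $\tau$ is densely finite (Proposition \ref{jiu27-3}), and since $\tau_i((c-\ep)_+)\to\tau((c-\ep)_+)<\infty$ forces $\tau_i\neq\tau_\infty$ eventually we may assume each $\tau_i$ densely finite. The lower bound $\liminf_i\tau_i(c)\ge\tau(c)$ follows as above. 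The crux of the upper bound is a \emph{uniform} estimate $\sup_i d_{\tau_i}(c)<\infty$. To obtain it, assume $c\neq 0$ and pick $\delta\in(0,\|c\|)$; since $\Ped(A\otimes\K)$ is algebraically simple (Proposition \ref{shi15-1}) and $(c-\delta)_+\in\Ped(A\otimes\K)_+\nzero$, the element $c$ lies in the algebraic ideal generated by $(c-\delta)_+$, and a routine Cuntz-comparison computation then gives $c\lesssim N(c-\delta)_+$ for some $N\in\N$. Hence $d_{\tau_i}(c)\le N\,d_{\tau_i}((c-\delta)_+)$, and since $(c-\delta/2)_+\ge\tfrac{\delta}{2}f_{1/n}((c-\delta)_+)$ for all $n$ we get $d_{\tau_i}((c-\delta)_+)\le\tfrac{2}{\delta}\tau_i((c-\delta/2)_+)$; by (iii), $\tau_i((c-\delta/2)_+)\to\tau((c-\delta/2)_+)<\infty$, so $d_{\tau_i}(c)\le K'$ for all large $i$ and a finite constant $K'=K'(c,\delta,N)$.

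With this bound, fix $\ep>0$. Since $(c-\ep)_+$ and $\min(c,\ep):=c-(c-\ep)_+$ commute (both in $C^*(c)$) and sum to $c$, additivity of $\tau_i$ on commuting positive elements gives $\tau_i(c)=\tau_i((c-\ep)_+)+\tau_i(\min(c,\ep))$. Now $\tau_i(\min(c,\ep))\le\|\min(c,\ep)\|\,d_{\tau_i}(\min(c,\ep))=\ep\,d_{\tau_i}(c)\le\ep K'$ for large $i$, using $\tau_i(x)\le\|x\|\,d_{\tau_i}(x)$ and $\min(c,\ep)\sim c$, while $\tau_i((c-\ep)_+)\to\tau((c-\ep)_+)$ by (iii). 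Thus $\limsup_i\tau_i(c)\le\tau((c-\ep)_+)+\ep K'$; letting $\ep\to 0$ and using $\tau((c-\ep)_+)\to\tau(c)$ gives $\limsup_i\tau_i(c)\le\tau(c)$, and combined with the lower bound, $\tau_i(c)\to\tau(c)$.

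I expect the main obstacle to be precisely the uniform bound $\sup_i d_{\tau_i}(c)<\infty$: without it, although the $\tau_i$ are controlled on each truncation $(c-\delta)_+$, mass of $\tau_i$ accumulating near the bottom of the spectrum of $c$ could keep $\tau_i(\min(c,\ep))$ from being uniformly small, and the naive splitting $\tau_i(c)=\tau_i((c-\ep)_+)+\tau_i(\min(c,\ep))$ then only relates $\tau_i(c)$ to itself. The comparison $c\lesssim N(c-\delta)_+$, valid because $c$ lies in the Pedersen ideal of the simple algebra $A\otimes\K$, is what breaks this circularity; the remaining care is the routine reduction away from $\tau_\infty$ and checking the soft sandwich arguments.
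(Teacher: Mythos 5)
Your proof is correct, and its mathematical engine is the same as the paper's: a uniform bound $\sup_i d_{\tau_i}(c)<\infty$ obtained from algebraic simplicity of $\Ped(A\otimes\K)$ (Proposition \ref{shi15-1}) together with $d_{\tau}((c-\delta)_+)\le \tfrac{2}{\delta}\tau((c-\delta/2)_+)$, followed by the equicontinuity estimate $\tau_i(c)-\tau_i((c-\ep)_+)\le \ep\, d_{\tau_i}(c)$. The only difference is organizational: you run the cycle ${\rm (ii)}\Rightarrow{\rm (i)}\Rightarrow{\rm (iii)}\Rightarrow{\rm (ii)}$ and place the hard step in ${\rm (iii)}\Rightarrow{\rm (ii)}$, whereas the paper runs ${\rm (i)}\Rightarrow{\rm (ii)}\Rightarrow{\rm (iii)}\Rightarrow{\rm (i)}$ and puts the identical argument into ${\rm (i)}\Rightarrow{\rm (ii)}$; both cycles are complete and the auxiliary implications you supply are sound.
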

\begin{proof}
${\rm (i)\Rightarrow (ii)}:$ 
If $\tau=\tau_\infty$ (see Definition \ref{jiu21-d}), then for all 
$a\in \Ped(A\otimes\K)_+^1,$  $\tau(a)\le \liminf_i\tau_i(a)$ implies $\lim_i\tau_i(a)=\infty=\tau(a).$ In the following, we may assume $\tau\neq\tau_\infty.$ Let $a\in \Ped(A\otimes\K)_+^1.$ 
Let $\ep\in(0,\|a\|/4).$ 
$\limsup_i\tau_i((a-\ep)_+)\le \tau(a)$ implies that 
there is $i_0$ such that for any $i\ge i_0,$ $\tau_i((a-\ep)_+)\le \tau(a)+1.$ 
Let $b:=(a-2\ep)_+\neq 0.$ 
By Proposition \ref{shi15-1}, there are 
$y_1,\cdots,y_n,z_1,\cdots,z_n \in \Ped(A)$ such that 
$a=\sum_{i=1}^ny_i  (a-2\ep)_+z_i.$ 
Let $M:=n(\tau(a)+1)/\ep.$ 
Then 
${{d_{\tau_i}(a)}}
\le n d_{\tau_i}((a-2\ep)_+) 
\le n {\tau_i}\left(\frac{ (a-\ep)_+}{\ep}\right)\le M. 
$ 
It follows that  $\|\tau_i|_{C^*(a)}\|\le  M$ for all $i\ge i_0.$ 
Hence, for any $\theta>0,$ there is $\eta>0$ such that 
$\tau_i(a)\le \tau_i((a-\eta)_+)+\theta$ for all $i\ge i_0$ 
(for example, let $\eta=\theta/M$). 
Then
\beq
\limsup_i\tau_i(a)
&\le &
\limsup_i(\tau_i((a-\eta)_+)+\theta) 
\le 
\tau(a)+\theta
\le 
\liminf_i\tau_i(a)+\theta. 
\eneq
Since $\theta$ is arbitrary, we have $\lim_i\tau_i(a)=\tau(a).$ 

${\rm (ii)\Rightarrow (iii)}$ is trivial. 
We will show ${\rm (iii)\Rightarrow (i)}.$
Let $a\in (A\otimes\K)_+$ and let $\ep>0.$ 
Then $(a-\ep)_+\in\Ped(A\otimes\K)_+,$ and   
$\limsup_i \tau_i((a-\ep)_+)
 = 
\lim_i \tau_i((a-\ep)_+)
=\tau((a-\ep)_+)\le   
\tau(a). 
$
Since $\tau$ is lower semicontinuous, 
for any $\theta>0,$
there is $\dt>0$ such that $\tau(a)-\theta \le \tau((a-\dt)_+).$ Then 
$
\tau(a)-\theta 
\le \tau((a-\dt)_+) 
= \lim_i\tau_i((a-\dt)_+) 
\le \liminf_i\tau_i(a). 
$
Since $\theta$ is arbitrary, 
we 
have $\tau(a)\le \liminf_i\tau_i(a).$ Thus 
$\{\tau_i\}$ converges to $\tau.$
\end{proof}


\begin{thebibliography}{99}
\addcontentsline{toc}{section}{Reference}

\bibitem
{APRT}
R.~Antoine,  F.~Perera,  L.~Robert, and and H.~Thiel, 
{\itshape $C^*$-algebras of stable rank one and their Cuntz semigroups.} 
Duke Math. J. {\bf 171} (2022), no. 1, 33--99.

\bibitem
{BH}B.~Blackadar and D. Handelman,
{\itshape Dimension functions and traces on $C^*$-algebra.}
J. Funct. Anal. {\bf 45} (1982), 297--340.

\bibitem{Br77}
L.~G.~Brown,  
{\itshape Stable isomorphism of hereditary subalgebras of  
$C^*$-algebras.}
Pacific J. Math. {\bf 71} (1977), no. 2, 335--348.

\bibitem{BP91}
L.~G.~Brown and G.~K.~Pedersen,
{\itshape $C^*$-algebras of real rank zero.}
J. Funct. Anal. {\bf 99} (1991), no. 1, 131--149.

\bibitem{BPT}
N.~P.~Brown, F.~Perera, and A.~Toms,
{\itshape The Cuntz semigroup, the Elliott conjecture, and dimension
functions on $C^*$-algebras.} 
J. Reine Angew. Math. {\bf 621} (2008), 191--211. 


\bibitem{CETWW21}
J.~Castillejos, S.~Evington, A.~Tikuisis, S.~White, and W.~Winter,
{\itshape Nuclear dimension of simple 
$C^*$-algebras.} 
Invent. Math. {\bf 224} (2021), no. 1, 245--290.

\bibitem{Cuntz77}
J.~Cuntz, 
{\itshape The structure of multiplication and addition in 
simple $C^*$-algebras.} Math. Scand. {\bf 40} (1977), no. 2, 215--233.

 
\bibitem{DT}
M.~Dadarlat and A.~Toms,
{\itshape Ranks of operators in simple $C^*$-algebras.} 
J. Funct. Anal. {\bf 259} (2010), 1209--1229.
 
\bibitem
{eglnp}
G.~Elliott, G.~Gong, H.~Lin, and Z.~Niu,
{\itshape Simple stably projectionless $C^*$-algebras of generalized tracial rank one.}  
J. Noncommutative Geometry,
\textbf{14} (2020), 251--347. 


\bibitem{eglnkk0} 
G.~Elliott, G.~Gong, H.~Lin, and Z.~Niu, 
{\itshape The classification of simple separable KK-contractible \CA s with finite nuclear dimension.}  
J. Geom. Phys. {\bf 158} (2020), 103861, 51 pp.

\bibitem{EHT}
G.~Elliott, T.~Ho, and  A.~Toms,  
{\itshape A class of simple  $C^*$-algebras with stable rank one.} 
J.~Funct.~Anal. {\bf 256} (2009), no. 2, 307--322. 


\bibitem{EN} G.~Elliott and Z.~Niu, 
{\itshape On the small boundary property and $\mathcal{Z}$-absorption.}   preprint, arXiv: 2504.03611v2.  


\bibitem
{ERS}
G.~Elliott, L.~Robert, and L.~Santiago, 
{\itshape The cone of lower  semicontinuous traces on a  $C^*$-algebra.} Amer. J. Math \textbf{133} (2011),   969--1005.

\bibitem
{GJS}
G.~Gong, X.~Jiang, and  H.~Su, 
{\itshape Obstructions to  ${\cal Z}$-stability for unital simple $C^*$-algebras.} 
Canad. Math. Bull. {\bf 43} (2000), no. 4, 418--426.

\bibitem{FLL21}
X.~Fu, K.~Li, and H.~Lin, 
{\itshape Tracial approximate divisibility and stable rank one.}  
J. London Math. Soc. {\bf 106} (2022), 3008--3042.

\bibitem{FL2020}
X.~Fu and H.~Lin,  
{\itshape Tracial approximation in simple $C^*$-algebras.}
 Canadian Journal of Mathematics, Volume {\bf 74} , Issue 4 , August 2022 , pp. 942--1004. 


\bibitem{FL2022}
X.~Fu and H.~Lin,   
{\itshape Nonamenable simple $C^*$-algebras with tracial approximation.} (English summary) Forum Math. Sigma {\bf 10} (2022), Paper No. e14, 50 pp. 

\bibitem
{FLosc}
X.~Fu and H.~Lin,   
{\itshape Tracial oscillation zero and stable rank one.}
Canad. J. Math. {\bf 77} (2025), no. 2, 563--630. 

\bibitem{Haa} 
U.~Haagerup, 
{\itshape Quasitraces on exact C-algebras are Traces.} 
C. R. Math. Rep. Acad. Sci. Canada Vol. {\bf 36} (2-3) 2014, pp. 67--92. 

\bibitem{JS99}X.~Jiang and  H.~Su,
{\itshape On a simple unital projectionless \CA.}
Amer. J. Math. {\bf 121} (1999), no.~{2}, 359--413.

\bibitem{KR00} E.~Kirchberg and M.~R{\o}rdam,
{\itshape Non-simple purely infinite $C^*$-algebras.}
Amer. J.  Math., {\bf 122} (2000) no.~{3}, 637--666.

\bibitem{KRadv}
E.~Kirchberg and M.~R{\o}rdam, 
{\itshape Infinite non-simple  $C^*$-algebras: absorbing the Cuntz algebras  ${\cal O}_\infty$.} 
Adv. Math. {\bf 167} (2002), no. 2, 195--264. 

\bibitem{KR14} E.~Kirchberg, and M.~R{\o}rdam,
{\itshape Central sequence $C^*$-algebras and tensorial absorption of the Jiang-Su algebra.} J. Reine Angew. Math. {\bf 695} (2014), 175--214. 

\bibitem{LN} 
C.~Li and Z.~Niu, 
{\itshape Stable rank of $C(X)\rtimes \Gamma$.}
preprint, arXiv:2008.03361v2. 2020. 


\bibitem{L22}
H.~Lin,  
{\itshape Tracial approximation and ${\cal Z}$-stability.}
preprint, arXiv: 2205.04013v3. 


\bibitem
{LinJFA}
H.~Lin,  
{\itshape Strict comparison and stable rank one.} 
J. Funct. Anal. {\bf 289} (2025), no. 9, Paper No. 111065, 25 pp. 

\bibitem
{LinAdv}
H.~Lin,  
{\itshape Tracial oscillation zero and ${\cal Z}$-stability.} 
Adv. Math. {\bf 439} (2024), Paper No. 109462, 51 pp. 

\bibitem{N22}
Z.~Niu,
{\itshape Comparison radius and mean topological dimension: Rokhlin property, comparison of open sets, and subhomogeneous $C^*$-algebras.} JAMA, {\bf 146}, 595--672 (2022).


\bibitem{MS12}
H.~Matui and Y.~Sato,
{\itshape Strict comparison and $\mathcal{Z}$-absorption of nuclear $C^*$-algebras.}
 Acta Math. {\bf 209} (2012), no.~1, 179--196.

\bibitem{MS14} H.~Matui and Y.~Sato,
{\itshape Decomposition rank of UHF-absorbing $C^*$-algebras.}
Duke Math. J., {\bf163 (14)} (2014), 2687--2708. 

\bibitem{Pedbk}  G. K. Pedersen, 
{\itshape  $C^*$-algebras and their automorphism groups.}
 London Mathematical Society Monographs, 14. Academic Press, Inc. London/New York/San Francisco, 1979.

\bibitem{Rff} 
M.~Rieffel, 
{\itshape Dimension and stable rank in the K-theory of $C^*$-algebras.} 
Proc. London Math. Soc. {\bf 46} (1983),  301--333.

\bibitem{Rob}
L.~Robert,
{\itshape Remarks on  ${\cal Z}$-stable projectionless  $C^*$-algebras.} 
Glasg. Math. J. {\bf 58} (2016), no. 2, 273--277.


\bibitem
{RorUHF2}  
M.~R{\o}rdam, 
{\itshape On the structure of
simple $C^*$-algebras tensored with a UHF-algebra, II.} 
J.~Funct.~Anal. {\bf 107} (1992), 255--269.

\bibitem{Ror04JS} M.~R{\o}rdam,
{\itshape The stable and the real rank of ${\cal Z}$-absorbing $C^*$-algebras.}
Internat. J. Math. {\bf 15} (2004), no.~{10}, 1065--1084.

\bibitem{T20}
H.~Thiel, 
{\itshape Ranks of operators in simple $C^*$-algebras with stable rank one.} 
Comm. Math. Phys. {\bf 377} (2020), no. 1, 37--76. 

\bibitem{Tik14}
A.~Tikuisis, 
{\itshape Nuclear dimension, ${\cal Z}$-stability, and algebraic simplicity
for stably projectionless $C^*$-algebras.}
Math. Ann. (2014) {\bf 358}, 729--778. 

\bibitem{V98}
J.~Villadsen, 
{\itshape Simple  $C^*$-algebras with perforation.}
J. Funct. Anal. {\bf 154} (1998), no. 1, 110--116.

\bibitem{V99}
J.~Villadsen, 
{\itshape On the stable rank of simple $C^*$-algebras.} 
J. Amer. Math. Soc. {\bf 12} (1999), no. 4, 1091--1102. 

\bibitem{W12pure} W.~Winter,
{\itshape Nuclear dimension and $\mathcal{Z}$-stability of pure $C^*$-algebras.}
Invent. Math. {\bf 187} (2012), no.~{2}, 259--342.

\bibitem{Z90}
S.~Zhang, 
{\itshape A property of purely infinite simple $C^*$-algebras.}
Proc. Am. Math. Soc. 
Vol. {\bf 109}, no. 3 (Jul., 1990), pp.~717--720.

\end{thebibliography}
\end{document}